\newcommand{\R}{{\mathbb{R}}}
\newcommand{\N}{{\mathbb{N}}}
\renewcommand{\vec}{\mathbf}
\newcommand{\BigO}{\mathcal O}
\newcommand{\eps}{\varepsilon}
\newtheorem{theorem}{Theorem}[section]
\newtheorem{remark}[theorem]{Remark}
\newtheorem{lemma}[theorem]{Lemma}
\newtheorem{definition}[theorem]{Definition}
\newenvironment{proof}[1][Proof]{\textbf{#1.} }{~ \hfill \rule{0.5em}{0.5em}\\ \indent}
\title{Derivation of the Isotropic Diffusion Source Approximation (IDSA) for Supernova Neutrino Transport by Asymptotic Expansions
\thanks{This work was funded by the HP2C Initiative (High Performance and High Productivity Computing) in the framework of the project ``Stellar Explosions''.}
}
\author{H.~Berninger\footnotemark[2]
\and E.~Fr\'enod\footnotemark[3]
\and M.~Gander\footnotemark[2]
\and M.~Liebend\"orfer\footnotemark[4]
\and J.~Michaud\footnotemark[2]
}
\begin{document}

\maketitle

\renewcommand{\thefootnote}{\fnsymbol{footnote}}

\footnotetext[2]{Universit\'e de Gen\`eve, Section de Math\'ematiques, 2-4,~rue du Li\`evre, CP~64, CH-1211~Gen\`eve ({\tt Heiko.Berninger|Martin.Gander|Jerome.Michaud@unige.ch})}
\footnotetext[3]{Universit\'e de Bretagne-Sud, Laboratoire de Math\'ematiques, Centre Yves Coppens, Bat.~B, BP~573, \mbox{F-56017~Vannes} ({\tt emmanuel.frenod@univ-ubs.fr})}
\footnotetext[4]{Universit\"at Basel, Departement Physik, Klingelbergstrasse~82, CH-4056~Basel ({\tt Matthias.Liebendoerfer@unibas.ch})}

\renewcommand{\thefootnote}{\arabic{footnote}}

~\\
{\small {\bf Abstract - }
We present Chapman--Enskog and Hilbert expansions applied to the $\BigO(v/c)$ Boltzmann equation for the radiative transfer of neutrinos in core-collapse supernovae. Based on the Legendre expansion of the scattering kernel for the collision integral truncated after the second term, we derive the diffusion limit for the Boltzmann equation by truncation of Chapman--Enskog or Hilbert expansions with reaction and collision scaling. We also give asymptotically sharp results obtained by the use of an additional time scaling. The diffusion limit determines the diffusion source in the \emph{Isotropic Diffusion Source Approximation (IDSA)} of Boltzmann's equation \cite{LiebendoerferEtAl09big,BeFrGaLiMiVa12_esaimbig} for which the free streaming limit and the reaction limit serve as limiters. Here, we derive the reaction limit as well as the free streaming limit by truncation of Chapman--Enskog or Hilbert expansions using reaction and collision scaling as well as time scaling, respectively. Finally, we motivate why limiters are a good choice for the definition of the source term in the IDSA.
}
\\~\\
{\small {\bf Keywords - } 
Boltzmann equation, radiative transfer, neutrino, core-collapse supernova, asymptotic expansion, diffusion limit
}
\\~\\
{\small {\bf AMS subject classification - }
35B40,
35Q20,
35Q85,
82C70,
85-08, 
85A25
}

\thispagestyle{plain}

\section*{Introduction}

\if 0

Neutrinos belong to the most enigmatic phenomena in the universe by which modern physics is still largely puzzled~\cite{Close10}. This circumstance becomes even more exciting by the considerable impact that neutrinos seem to have on nature. For example, astrophysicists started to acknowledge their significance for the explosion of massive stars in 1966 already~\cite{ColgateWhite66big,Wheeler99big}, only 10 years after neutrinos had been detected~\cite{CowanReinesEtAl56}. Ever since, reasonable simulations of core-collapse supernovae needed to include the appearance of neutrinos and their effects in the process, see, e.g., \cite{}$\clubsuit_1$? [e.g., Bethe, Wilson, Hillebrand, M\"uller, Janka, Bruenn, Mezzacappa, Liebend\"orfer, $\ldots$]. In fact, more than $99\%$ of the released gravitational binding energy of a collapsing massive star is carried away by neutrinos~\cite[p.\;361]{RamppJanka02}. Moreover, the neutrino energy is considered crucial for reviving and feeding the shock that finally leads to the explosion of the star, but the exact mechanism of this process is still not completely understood. It is noteworthy that until the occurrence of the great supernova of 1987, the knowledge on core-collapse supernovae was largely based on theory and numerical simulations. Only then the impact of neutrinos on type~II supernovae, e.g., in terms of their tremendous share of the total released energy, could be confirmed by measurements in an astonishingly precise manner~\cite{Woosley89,WoosleyWeaver89}.

State-of-the-art simulations of core-collapse supernovae involve models that couple hydrodynamics of the background matter with radiative transfer of neutrinos~\cite{Bruenn85,MezzacappaBruenn93abig,RamppJanka02,LiebendoerferEtAl09big}$\clubsuit_2$?. However, full coupling of hydrodynamic and Boltzmann equations in 3D will probably remain computationally unfeasable for quite some time. For the Boltzmann equation, computation times increase dramatically in regimes in which high density of neutrinos, i.e., small mean free paths prevail. To overcome these issues, one seeks approximations of the Boltzmann equation that capture the main processes of neutrino transport in different regimes while being computationally more economic. 

\fi

Some astrophysical processes like nuclear burning in the sun, neutron
star mergers, or the collapse and explosion of a massive star emit
enormeous quantities of neutrinos from captures of electrons or
positrons on nucleons. If such events occur in our Galaxy, the
neutrinos can be recorded on Earth together with the electromagnetic
spectrum of the corresponding astrophysical events. However, since
the neutrinos only weakly interact with matter, they are more
difficult to detect than photons. On the other hand, due to their
weak interactions, they can escape from much denser and hotter matter
than the photons so that they carry information about this extreme
state of matter directly to the terrestrial observer. Moreover,
they are not obscured by dust and may even be detectable from events
that are not visible in the electromagnetic spectrum. {By the
detection of neutrinos, models describing nuclear burning in the
sun were confirmed. Later, neutrino oscillations were detected
that lead to extensions of the standard model of particle physics~\cite{BahcallSears72,AhmadEtAl02}. Furthermore, 
the general core-collapse scenario of type II supernovae
was confirmed by the detection of neutrinos from supernova 1987A,
i.e., the explosion of a massive star in the close-by
Large Magellanic Cloud~\cite{BiontaEtAl87,HirataEtAl87}.}

Astrophysicists started to appreciate the role of neutrinos in the
explosion of massive stars in 1966 already~\cite{ColgateWhite66big,Arnett66}, only 10 years
after the postulated neutrino has been detected as a free particle~\cite{CowanReinesEtAl56}.
Ever since, simulations of core-collapse supernovae attempted
to take the drain of energy and lepton number by the emission of
neutrinos into account~\cite{BetheWilson85,Bruenn85,MyraBludman89,WilsonMayle93}.
In fact, more than 99\% of the
released gravitational binding energy of the neutron star that
forms at the center of the stellar collapse is carried away by
neutrinos~\cite{Bethe90}.
It turned out that neutrinos are not
only important as messengers from density and temperature regimes
that are not otherwise accessible by observation, but they also
have a large impact on the dynamics of the supernova itself. The
neutrino energy is considered crucial for reviving and feeding the
shock that finally leads to the explosion of the star. But the exact
mechanism of this process is still not completely understood. The
main issue is that sufficiently accurate and reliable radiative
transfer methods were initially only implemented in spherical
symmetry, as for example the solution of the 
$\BigO(v/c)$ Boltzmann equation~\cite{MezzacappaBruenn93b,RamppJanka00,ThompsonBurrowsPinto03}
or the complete general relativistic Boltzmann equation~{\cite{LiebendoerferEtAl01,LiebendoerferEtAl04big,SumiyoshiEtAl06}.}
But the explosion dynamics
and the interaction of the neutrinos with the convective stellar
matter and magnetic fields at the core of the explosion are
fundamentally three-dimensional processes~\cite{HerantEtAl94,JankaMueller96}.
It is therefore crucial
to investigate the reliability of efficient approximations of the
radiative transfer equations that can more efficiently be
used in three-dimensional models. 
For instance, one seeks approximations of the Boltzmann equation that capture the main
processes of neutrino transport in different regimes while being computationally more
economic.

{In principle, methods for the solution of the non-relativistic Boltzmann equation can be extended to the relativistic Boltzmann equation. For instance, Monte Carlo techniques used in BAMPS \cite{XuGreiner05} or the relativistic lattice Boltzmann (RLB) algorithm \cite{HuppEtAl11, RomatschkeMendosaSucci11} have been successfully applied to high energy physics systems. With regard to approximations of the relativistic Boltzmann equation the same is true. For example, one can linearize the collision term and get so-called model equations. In the non-relativistic setting this leads to the BGK model which was formulated independently by Bhatnagar, Gross and Krook \cite{BGK54} and Welander~\cite{Welander54}. In a relativistic framework, Marle \cite{Marle65, Marle69a, Marle69b} and Anderson and Witting \cite{AndersonWitting74} developed analogues of the BGK model for relativistic fluids. In contrast to the former model, the latter permits particles with zero rest mass and thus can be used for models in which neutrinos are treated as massless. The applicability of the Anderson--Witting model to the analysis of systems that could describe magnetic white dwarfs or cosmological fluids has been demonstrated in~\cite[p.\;218]{CercignaniKremer02}. 
In all core-collpase supernova
models that involve the Boltzmann equation or an approximation of it, the right hand side of the equation always contains terms for the emission and absorption of neutrinos by matter. Since these terms can be interpreted as emission out of and absorption into an equilibrium distribution, they represent exacly the structure of the right hand side of the model equations. In this respect, Boltzmann equations used in core-collpase supernova simulations can always be regarded as generalizations of the model equations.
To further simplify the models and reduce the cost of the algorithms, one can also approximate the transport operator on the left hand side of the Boltzmann equation. 
Well-known approximations of the transport operator such as the
Multi-Group Flux-Limited Diffusion approximations~\cite{OttEtAl08,BruennEtAl10}
or the Variable Eddington Factor Method~\cite{MarekJanka09,MuellerEtAl10,RamppJanka02}
have successfully been extended to axi-symmetric simulations of supernovae. In this paper we investigate an approximation of the transport operator.}

The main physical behaviour of neutrinos in core-collapse supernovae is that they are trapped by the matter in high density regimes, where their dynamics is governed by diffusion, whereas they are practically freely streaming in low density regimes further away from the core. In both cases, the Boltzmann equation can be reduced considerably. This is the underlying idea of the decomposition used in the \emph{Isotropic Diffusion Source Approximation (IDSA)} of Boltzmann's equation~\cite{LiebendoerferEtAl09big}. Concretely, the distribution function of the neutrinos is decomposed additively into a trapped and a streaming particle component on the whole domain. The resulting equations that govern these two particle components are reduced to the main physical properties of the components. The major challenge of the approximation is to find an appropriate coupling between the two reduced equations. In the IDSA they are supposed to be coupled by a source term. The concrete form of this source term, the diffusion source, is based on the diffusion limit and, for non-diffusive regimes, limited from above and below on the basis of the free streaming and reaction limits. The derivation of the diffusion limit in terms of an asymptotic expansion is sketched in~\cite[App.\;A]{LiebendoerferEtAl09big} and given in full detail in~\cite{BeFrGaLiMiVa12_esaimbig}.

The main purpose of this paper is to derive the diffusion, free streaming and reaction limits in the IDSA by Chapman--Enskog and Hilbert expansions of variations of the reaction and collision scaled as well as the time scaled Boltzmann equation. We give the concrete order of the approximations to the Boltzmann equation in terms of the scaling parameter $\eps>0$ and thus provide a deeper understanding of the approximation quality for radiative neutrino transfer given by the IDSA. Here, we concentrate almost entirely on the analysis of the IDSA. Computational aspects, discretization and numerical solution techniques for the IDSA and the Boltzmann equation accompanied with numerical results which compare the two models are presented in~\cite{LiebendoerferEtAl09big,BeFrGaLiMiVa12_esaimbig}. 

Concretely, the main topics and results discussed in this paper are as follows. In Section~\ref{ModelsForCCSN}, we recall two models used for the simulation of core-collapse supernovae, namely a fully coupled model of hydrodynamic and $\BigO(v/c)$ Boltzmann equations and the Isotropic Diffusion Source Approximation. 
We restrict our considerations to spherical symmetry although extensions to the full 3D case, in particular to an IDSA model in 3D, are feasible~\cite[Sec.\;4]{LiebendoerferEtAl09big}. With regard to the IDSA, we explain how the hypotheses on the trapped and streaming particle components lead to the corresponding reduced equations. We perform a Legendre expansion of the scattering kernel that is crucial for the derivation of the diffusion and the reaction limit of Boltzmann's equation. The three regimes of the IDSA represented by the diffusion source and its limiters are discussed mathematically and physically. Finally, we address some mathematical issues of the IDSA.

Section~\ref{asymptotics}, which contains the asymptotic analysis, is the core of this paper. Here, we perform a Chapman--Enskog expansion of Boltzmann's equation and then consider Hilbert expansions. The results can be summarized as follows. Introducing a scaling parameter $\eps>0$ that accounts for large reaction and collision terms, we can derive the diffusion limit of the Boltzmann equation up to the order $\BigO(\eps^2)$ if we neglect the same terms that needed to be neglected in the ``leading order'' approximation in~\cite{LiebendoerferEtAl09big,BeFrGaLiMiVa12_esaimbig}. With the same scaling and without the need to neglect these terms, the reaction limit of the Boltzmann equation up to the order $\BigO(\eps)$ can be derived. Here, the limit used in the IDSA is obtained if the free streaming component decreases strongly enough asymptotically. 

In principle, the same results can be derived by Chapman--Enskog expansions as well as by Hilbert expansions. However, the Hilbert expansion approach seems the more natural one since in fact, it is finally used for the actual comparison with Boltzmann's equation. For the application of Chapman--Enskog expansions, we resort to a lemma that provides approximations of Chapman--Enskog expansions by Hilbert expansions. This lemma requires additional assumptions on the asymptotic behaviour of components of the expanded distribution function in the Chapman--Enskog expansion that are thus not needed if we start with the Hilbert expansion. Nevertheless, we find that these connections between Chapman--Enskog and Hilbert expansions give a good insight into the nature of these expansions that are interesting on their own.

As the main sharp asymptotic result, we provide a derivation of the diffusion limit of Boltzmann's equation and thus of the diffusion source up to the order $\BigO(\eps^2)$ with the use of an additional time scaling by which it is no longer necessary to neglect any further terms. Finally, the free streaming limit of the Boltzmann equation is derived up to the order $\BigO(\eps)$ if only time scaling of the equation is considered. The paper closes with a motivation why it is natural to use the free streaming and the reaction limits as limiters of the diffusion source as it is done in the IDSA.


\section{Models for Core-Collapse Supernovae}
\label{ModelsForCCSN}

In this section, we introduce two models for the simulation of core-collapse supernovae. The first model consists of a coupled system of hydrodynamic and Boltzmann equations. The second model is the Isotropic Diffusion Source Approximation (IDSA) of the first model and has been developed in Liebend\"orfer et~al.~\cite{LiebendoerferEtAl09big}. Although the two models can be considered both in spherical symmetry and without symmetry in 3D, we restrict ourselves to the spherically symmetric case here. The description of the models resembles in parts the presentation given in \cite{BeFrGaLiMiVa12_esaimbig}, however, the discussion of them is more comprehensive here.

\subsection{Fully coupled model}
\label{fully_coupled}

A widely accepted model for the simulation of core-collapse supernovae is a coupled system of the hydrodynamic equations \eqref{Hydro} for the background matter (including leptonic number conservation) with Poisson's equation~\eqref{Grav} for gravity and Boltzmann's equation \eqref{Boltz} for the radiative transfer of neutrinos. We refer to papers of different groups~\cite{Bruenn85,MezzacappaBruenn93abig}, \cite{RamppJanka02}, \cite{ThompsonBurrowsPinto03} and \cite{SumiyoshiEtAl06} who have implemented different algorithms for the same physics and, in particular, to the comparison paper~\cite{LiebendoerferEtAl05}. However, all these approaches are restricted to spherically symmetric cases.
The presentation in this paper is based on the version used in~\cite{LiebendoerferEtAl09big}. We write the system of equations in short as
\begin{eqnarray}
\frac{\partial \vec u}{\partial t} + \nabla\cdot \vec F(\vec u) + \nabla\vec p(\vec u)&=& \vec G(\vec u,\phi)+\vec S(\vec u,f) \label{Hydro}\\[2mm]
\Delta \phi&=& 4\pi G \rho \label{Grav}\\[2mm]
\underbrace{\frac{1}{c}\frac{d f}{d t}+ \mu \frac{\partial f}{\partial r} +F_\mu(\vec u)\frac{\partial f}{\partial \mu}+F_\omega(\vec u)\frac{\partial f}{\partial \omega}}_{\mathcal D(f)} &=& j(\vec u) \underbrace{-\tilde\chi(\vec u) f+  \mathcal C(\vec u,f)}_{\mathcal J(f)}\,. \label{Boltz}
\end{eqnarray}
This system is given in spherical symmetry, i.e., for $(r,t)\in\Omega_T:=(0,R)\times (0,T)$ for some radius $R>0$ and some end time $T>0$. Here, $(0,R)$ represents the spatial domain $\Omega\subset\R^3$, which is the ball around the origin with radius~$R$, where the matter of the star that needs to be considered is thought to be contained. In the following we specify the equations in more detail.

\subsubsection{Hydrodynamics, state equation and gravity}
\label{hydrostategrav}

System \eqref{Hydro} contains the hydrodynamic equations for an ideal fluid, i.e., the Euler equations \cite{LandauLifshitz87}, as well as a balance law for the leptonic number. The state vector $\vec u\in\R^4$, which is a function on~$\Omega_T$, the nonlinear flux function $\vec F:\R^4\to\R^4$ and the source terms $\vec G=\vec G(\vec u,\phi)$ and $\vec S=\vec S(\vec u,f)$ have the form
  \begin{equation*}
  \vec u =\begin{pmatrix}
   \rho\\
    \rho v\\ 
     E \\
      \rho Y_e 
      \end{pmatrix}
 \,, \quad\,
  \vec F =\begin{pmatrix}
\rho  v\\
  v\rho v\\
v\left(E+p(\vec u)\right)\\
v\rho Y_e  
\end{pmatrix}\,, \quad\,
 \vec G=\begin{pmatrix}
0\\
-\rho \nabla \phi\\
- \rho v\nabla\phi \\
0
\end{pmatrix}\,, \quad\,
\vec S= \begin{pmatrix}
0\\
S_v(\vec u,f)\\
S_E\left(\vec u,f\right)\\
S_{Y_e}\left(\vec u,f\right)
\end{pmatrix}\,,
\end{equation*}
\if 0
  \begin{equation*}
  \vec u =\begin{pmatrix}
   \rho\\
    \rho v\\ 
     E \\
      \rho Y_e 
      \end{pmatrix}
  \,, \qquad  
  \vec F (\vec u)=\begin{pmatrix}
\rho  v\\
  v\rho v+p(\vec u)\\
v\left(E+p(\vec u)\right)\\
v\rho Y_e  
\end{pmatrix}\,, 
\end{equation*}
\begin{equation*}
 \vec G(\vec u,\phi)=\begin{pmatrix}
0\\
-\rho \nabla \phi\\
- \rho v\nabla\phi \\
0
\end{pmatrix}\,, \qquad
\vec S(\vec u,f)= \begin{pmatrix}
0\\
S_v(\vec u,f)\\
S_E\left(\vec u,f\right)\\
S_{Y_e}\left(\vec u,f\right)
\end{pmatrix}\,.
\end{equation*}
\fi
while, as usual, the gradient of the term $\vec p(\vec u)=(0,p(\vec u),0,0)^T$ with the pressure $p(\vec u)$ only enters the momentum equation. The variables contained in the state vector $\vec u$ are the density field $\rho$ and the velocity $v$ in radial direction of the background matter. More precisely, since the baryon number is the conserved quantity in astrophysical hydrodynamics, whether relativistic or classical, $\rho=n_b m_b$ is considered to be the product of the baryon number density $n_b$ and the baryonic mass constant $m_b$. Furthermore, the total energy density $E$ of the matter is given as the sum of its internal energy density $\rho e$ and its kinetic energy density $\rho\frac{v^2}{2}$. The pressure $p$ is assumed to be given by an equation of state $p=p(\rho,\vartheta(e),Y_e)$ 
where $\vartheta$ is the temperature (depending on the internal specific energy~$e$) and $Y_e=\frac{n_e}{n_b}$ is the net electron fraction of the matter \cite[p.\;1179]{LiebendoerferEtAl09big}. Here, $n_e$ is the electron minus the positron number density, so that $\rho Y_e=n_e m_b$ is the net electron density up to the constant $m_b$ \cite[pp.\;640/1]{MezzacappaBruenn93abig}. 
The appearance of $Y_e$ in the equation of state (in~\cite{LiebendoerferEtAl09big} the Lattimer--Swesty equation~\cite{LattimerSwesty91} is used) takes the compositional degrees of freedom into account. Under the extreme densities in the stellar core and in case of nuclear statistical equilibrium that we consider, the pressure is essentially given by the degeneracy pressure of the electrons. 
This dependency of the pressure on $Y_e$ necessitates to add a fourth balance law (leptonic number conservation) to the Euler equations which describes the evolution of~$Y_e$. 

On the right hand side, the bilinear term $\vec G(\vec u,\phi)$ accounts for the effect of the gravitational field of the matter onto itself, see, e.g.,~\cite[Sec.\;7]{TracPen03}. The Newtonian gravitational potential $\phi$ on $\Omega$ is given by the Poisson equation \eqref{Grav} where $G$ denotes the gravitational constant \cite[p.\;6]{LandauLifshitz87}. With regard to equations \eqref{Hydro} and \eqref{Grav} we recall that in spherical symmetry, with the radial unit vector~$\mathbf{e}_r$, the gradient and the divergence operators are given by 
\begin{equation}
\label{nabla_div_in_r}
\nabla\psi(r)=\frac{\partial }{\partial r}\psi(r)\,\mathbf{e}_r\qquad\mbox{and}\qquad\nabla\cdot \vec F(r)=\frac{1}{r^2}\frac{\partial }{\partial r}\left(r^2 F_r(r)\right)\,,
\end{equation}
for scalar fields $\psi:\R\to\R$ and vector fields $\vec F=(F_r,0,0)^T:\R\to\R^3$ (see, e.g., \cite[p.\;679]{Mihalas84}). 

The coupling of system \eqref{Hydro} for the matter with the Boltzmann equation \eqref{Boltz} for the neutrinos is provided by the source term $\vec S(\vec u,f)$ which adds to global momentum, energy and leptonic number conservation \eqref{Hydro}$_2$--\eqref{Hydro}$_4$. Generally speaking, $\vec S(\vec u,f)$ consists of sums of certain energy and angular moments of the right hand side of~\eqref{Boltz} and is nonlinear in $\vec u$ and, due to the restricted set of neutrino interactions with matter in Section~\ref{radiative}, linear in $f$. For instance, if the neutrinos were isotropic, $S_v(\vec u,f)$ would become the negative gradient of the neutrino pressure which corresponds to the pressure gradient of the matter on the left hand side of~\eqref{Hydro}, compare~\cite[Eq.~(24)]{LiebendoerferEtAl09big}. For further details we refer to \cite{MezzacappaBruenn93abig} and~\cite{Michaud13}. 

\begin{remark}
\label{supernova}
With regard to supernova simulations, it is crucial and, for our purpose, enough to know that the right hand side $\vec S(\vec u,f)$ of the hydrodynamic equations does not depend explicitly on~$f$ but only on energy and angular moments of $f$. In fact, the explosion of a star can be seen by the velocity of the background matter and can be inferred by the detection of neutrinos~\cite{Woosley89}. However, only the neutrino flux and energy spectrum are observable, but not their angular distribution $f$. It~therefore suffices to consider only moments of $f$ instead of $f$ itself. This is done both in the IDSA and in the asymptotics that we perform for the radiative transfer equations in Section~\ref{asymptotics}.
\end{remark}

Both hydrodynamics \eqref{Hydro} and gravitation \eqref{Grav} could be replaced by more accurate general relativistic versions~{\cite{LiebendoerferEtAl04big,RamppJanka02,MarekEtAl06,Kappeli11}.}  For instance, the gravitational binding energy released in a core-collapse supernova is equivalent to about $1/10$ of the mass of the remaining neutron star. This fact had already been claimed by W.~Baade and F.~Zwicky two years after the discovery of the neutron~\cite{BaadeZwicky34}. Recall that the mass conservation equation \eqref{Hydro}$_1$ is in fact a baryon number conservation equation, so that the mass defect is not ignored here. Since the gravitational binding energy is almost completely converted into neutrino energy \cite{RamppJanka02}, the energy balance equation \eqref{Hydro}$_3$ can be regarded as the conservation of energy including gravitational binding energy. The supernova models considered here are also valid if only a protoneutron star forms at the core and later collapses into a black hole due to continued accretion before, during or after the explosion, when part of the material does not reach escape velocity and falls back.

\subsubsection{Radiative transfer}
\label{radiative}

The radiative transfer equation used in \cite{LiebendoerferEtAl09big} for the transport of neutrinos is the $\BigO(v/c)$ Boltzmann equation~\eqref{Boltz} in spherical symmetry. It represents both the special and the general relativistic transport equation for massless fermions moving with the speed of light $c$ up to the order $\BigO(v/c)$ where $v$ is the velocity of the background matter. 
We refer to \cite[Sec.~95]{Mihalas84} for a derivation and \cite{RamppJanka02} for a discussion of possibly additional $\BigO(v/c)$ terms that might have to be considered. In practice, the fraction $v/c$ can be expected to be around $1/10$ in most of the computational domain except for the shock region where $v/c\approx 3/10$ can occur and at very high densities deep inside the neutron star where it gets very close to~$1$, see~\cite{LiebendoerferEtAl04big}. In that sense the $\BigO(v/c)$ Boltzmann equation must be regarded as an approximative model to the physical reality. {Apart from the relativistic aspect contained in the left hand side, equation~\eqref{Boltz} also takes the quantum aspect of the fermionic particles into account. This is achieved by introducing blocking factors in the emission and collision terms on the right hand side, cf.~\cite{UehlingUhlenbeck33} or~\cite{CercignaniKremer02}. Details will be given below}. 

The Boltzmann equation \eqref{Boltz} describes the evolution of the neutrino distribution function 
\begin{equation}
\label{fDef}
f:[0,T]\times (0,R]\times [-1,1]\times (0,E]\to [0,1]\,,\quad (t,r,\mu,\omega)\mapsto f(t,r,\mu,\omega)\,.
\end{equation}
Here, $\mu\in [-1,1]$ is the cosine of the angle between the outward radial direction and the direction of neutrino propagation and $\omega\in (0,E]$ with some maximal $E>0$ is the neutrino energy. In the formulation of \eqref{Boltz} that we use, both phase space variables $\mu$ and $\omega$ are given in the frame comoving with the background matter, cf.~\cite[p.\;640]{MezzacappaBruenn93abig}. In more general approaches, one could replace $(0,E]$ by $(0,\infty)$. 

With regard to the notation we use the Lagrangian time derivative 
\begin{equation}
\label{Lagrange}
\frac{d}{d t} = \frac{\partial}{\partial t} + v\frac{\partial}{\partial r}
\end{equation} 
in the comoving frame. Furthermore, the coefficient function $F_\mu(\vec u)$ weighting the $\mu$-derivative in~\eqref{Boltz} can be decomposed as 
\begin{equation*}
F_\mu(\vec u) = F_\mu^0 + F_\mu^1(\vec u)\,,\quad F_\mu^0 =\frac{1}{r}(1-\mu^2)\,,\quad F_\mu^1(\vec u)=\mu \left( \frac{d \ln\rho}{cd t}+\frac{3v}{cr}\right)(1-\mu^2)\,,
\end{equation*}
where the term $F_\mu^0\frac{\partial f}{\partial \mu}$ accounts for the change in propagation direction due to inward or outward movement of the neutrino. The second term $F_\mu^1(\vec u)\frac{\partial f}{\partial \mu}$ represents the angular aberration (i.e., the change observed in the comoving frame) of the neutrino propagation direction due to the motion of the fluid. Finally, the product of the coefficient function
\begin{equation*}
F_\omega(\vec u) =\left[\mu^2 \left( \frac{d \ln\rho}{cd t}+\frac{3v}{cr}\right)-\frac{v}{cr} \right]\omega
\end{equation*}
with $\frac{\partial f}{\partial \omega}$ in \eqref{Boltz} accounts for the (Doppler--)shift in neutrino energy due to the motion of the matter. The left hand side of the Boltzmann equation is abbreviated by $\mathcal{D}(f)$ where $\mathcal{D}$ is a linear operator. 


{The dependency of $\mathcal{D}$ on the hydrodynamic variables in $\vec u$ occurring in the comoving frame vanishes in case of a static background with frozen matter. In this case one can pass to the laboratory frame by setting $F_\mu^1(\vec u)=F_\omega(\vec u)=0$ and thus obtain the equation
\begin{equation}
\label{Boltz_frozen}
\frac{1}{c}\frac{\partial f}{\partial t}+ \mu \frac{\partial f}{\partial r} +\frac{1}{r}(1-\mu^2)\frac{\partial f}{\partial \mu} 
= j{}-\tilde\chi{} f+  \mathcal C(f)\,,
\end{equation}
in which Lorentz transformed quantities are used, see \cite[\S 90,\;\S 95]{Mihalas84}.}


Although in the infall phase \cite[p.\;787]{Bruenn85} it is enough to consider only electron neutrinos $\nu_e$, for postbounce simulations \cite[p.\;1179]{LiebendoerferEtAl09big} one needs at least two Boltzmann equations in order to obtain the transport of both electron neutrinos $\nu_e$ and electron antineutrinos $\bar\nu_e$. In general, one needs to include muon and tau neutrinos and their antiparticles, too~{\cite{BurasEtAl03big, Keil03}.} With our set of interactions given below, all different types of neutrinos are transported independently, so that one has to deal with up to six Boltzmann equations that are all coupled with the hydrodynamic equations by the source term $S(\vec u,f)$. Since the basic structure of these equations is the same, it is enough for our purpose to consider only one Boltzmann equation \eqref{Boltz} as a prototype. We point out, however, that these equations are coupled if we include interactions between different neutrino types like neutrino pair reactions~\cite[pp.\;774/5]{Bruenn85} or, as a future perspective, neutrino flavour oscillations~{\cite{BahcallSears72,AhmadEtAl02,Keil03, LentzEtAl12}.}

As for system \eqref{Hydro}, the coupling with hydrodynamics is provided by source terms on the right hand side of~\eqref{Boltz}. Their values usually differ for different neutrino flavours and their antiparticles. The interaction of neutrinos with matter includes emission and absorption
\begin{equation}
\label{reactions}
\begin{aligned}
e^-+p\rightleftharpoons n+\nu_e\\[1mm]
e^++n\rightleftharpoons p+\bar\nu_e
\end{aligned}
\end{equation}
of electron neutrinos $\nu_e$ or electron antineutrinos $\bar\nu_e$ by protons $p$ or neutrons $n$, the forward reactions being known as electron $e^-$ or positron $e^+$ capture. Analogous reactions occur in case of electron $e^-$ or positron $e^+$ capture by nuclei. They depend on the state of the background matter and result in neutrino emissivity $j(\omega,\vec u)$ and absorptivity $\chi(\omega,\vec u)$ whose sum is the {stimulated absorptivity 
} $\tilde\chi=j+\chi$ in \eqref{Boltz}, cf.~\cite[p.\;639]{MezzacappaBruenn93abig} and \cite[p.\;1177]{LiebendoerferEtAl09big}. {The stimulated absorptivity arises from the blocking factor $(1-f)$ that accounts for Pauli's principle in the difference of emissions and absorptions
$$
(1-f)j-\chi f=j-\tilde\chi f
$$
introduced in \cite{UehlingUhlenbeck33}, see also \cite[pp.\;39/40]{CercignaniKremer02}.} Formulas for the nonnegative expressions $j(\omega,\vec u)$ and $\chi(\omega,\vec u)$, that are nonlinear in $\omega$ and in $\vec u$, for both electron neutrino and its antiparticle have been derived in \cite[pp.\;822--826]{Bruenn85}.

By the term $\mathcal C(\vec u,f)$, the right hand side of \eqref{Boltz} also accounts for isoenergetic scattering of neutrinos (or antineutrinos) on protons, neutrons and nuclei. It is a linear integral operator in $f$, the so-called collision integral, and reads
\begin{equation}
\label{isoscattering}
%
%
\mathcal C\big(\vec u,f(t,r,\mu,\omega)\big)
=\frac{\omega^2}{c(hc)^3}\left[\int_{-1}^{1} R\big(\vec u,\mu,\mu',\omega\big) \big(f(t,r,\mu',\omega)-f(t,r,\mu,\omega)\big) d\mu'\right]
\end{equation}
where the isoenergetic scattering kernel $R(\vec u(t,r),\mu,\mu',\omega)$ is symmetric in $\mu$ and $\mu'$ and depends nonlinearly on all its entries, see \cite[pp.\;806/7,\;826--828]{Bruenn85} for concrete formulas. {As in the stimulated absorptivity, the quantum aspect of neutrinos is taken into account by blocking factors that are hidden in \eqref{isoscattering}, see  \cite[p.\;1176]{LiebendoerferEtAl09big}.
} Furthermore, Planck's constant is denoted by~$h$, the term corresponding to $f(t,r,\mu',\omega)$
accounts for in-scattering while the term corresponding to $f(t,r,\mu,\omega)$ represents out-scattering. Note that if $f$ does not depend on $\mu$ we have $\mathcal{C}(f)=0$. As an immediate consequence of the symmetry of the collision kernel 
with respect to $\mu$ and $\mu'$ we obtain for any $f$
\begin{equation}
\label{collision_vanishes}
\int_{-1}^1\mathcal{C}(\vec u,f)\,d\mu=0\,.
\end{equation}

{Neutrino-neutrino interactions and further neutrino interactions with the background matter as considered in~\cite[p.\;774]{Bruenn85} such as, e.g., neutrino-electron scattering are neglected in \cite{LiebendoerferEtAl09big}. For a more comprehensive interaction list see \cite{LentzEtAl12}. The neutrino interactions with matter that contribute to the opacity will be defined in Subsection~\ref{Legendre} for our case.}


\subsection{Isotropic Diffusion Source Approximation (IDSA)}
\label{IDSA}

In this section we give a short introduction to the Isotropic Diffusion Source Approximation (IDSA) that has been developed in \cite{LiebendoerferEtAl09big}. The aim of this approximation of Boltzmann's equation~\eqref{Boltz} is to reduce the computational cost for its solution, making use of the fact that \eqref{Boltz} 
is mainly governed by diffusion of neutrinos in the inner core and by 
transport of free streaming neutrinos in the outer layers of a star. The following ansatz for the IDSA intends to avoid the solution of the full Boltzmann equation in a third domain in between these two regimes as well as the detection of the corresponding domain boundaries. 
{In what is to come, we drop the dependency on $\vec u$ in the notation of the Boltzmann equation and abbreviate the right hand side of \eqref{Boltz} by $j+\mathcal J(f)$ where $\mathcal J(f)$ is linear in~$f$.}

\subsubsection{Ansatz: Decomposition into trapped and streaming neutrinos}
\label{ansatz}

We assume a \emph{decomposition} of
\begin{equation}
\label{decomposition}
f= f^t + f^s
\end{equation}
\emph{on the whole domain} into distribution functions $f^t$ and $f^s$ supposed to account for \emph{trapped} and for \emph{streaming} neutrinos, respectively.

With this assumption and using the linearity \mbox{$\mathcal D(f) = \mathcal D(f^t) + \mathcal D(f^s)$} as well as $\mathcal J(f) = \mathcal J(f^t)+ \mathcal J(f^s)$, solving the Boltzmann equation \eqref{Boltz}, i.e., 
$$
\mathcal D(f)=j+\mathcal J(f)
$$
is equivalent to solving the two equations
\begin{align}
\mathcal D(f^t) &= j+\mathcal J (f^t) -\Sigma\label{decomposedtrapped}\\[2mm]
\mathcal D(f^s) &= \mathcal J (f^s) +\Sigma\label{decomposedstreaming}
\end{align}
with an arbitrary coupling term $\Sigma(r,t,\mu, \omega,f^t,f^s,\vec u)$. For the IDSA one establishes approximations of these two equations arising from physical properties of trapped and streaming particles, respectively, and one determines an appropriate coupling function $\Sigma(r,t,\mu,\omega,f^t,f^s,\vec u)$. Following Remark~\ref{supernova}, the aim is to deal only with angular moments of $f^t$ and $f^s$ instead of the full distribution functions themselves.

\subsubsection{Hypotheses and their consequences for the trapped and streaming particle equations}
\label{hypotheses}

Concretely, one uses the following hypotheses. First, the trapped particle component $f^t$ as well as $\Sigma$ are assumed to be \emph{isotropic}, i.e., independent of $\mu$. 
Taking the angular mean of equation \eqref{decomposedtrapped} this leads to the trapped particle equation
\begin{equation}
\label{eq:trapped}
\frac{d f^t}{cd t} + \frac{1}{3}\frac{d \ln\rho}{cd t}\omega\frac{\partial f^t}{\partial \omega} = j-\tilde \chi f^t -\Sigma\,, 
\end{equation}
in which we slightly abuse the notation 
\begin{equation}
\label{f_t_is_beta_t}
f^t=\frac{1}{2}\int_{-1}^1 f^t d\mu
\end{equation}
concerning the domain of definition of the isotropic $f^t$ given in~\eqref{fDef}. The isotropic source term $\Sigma$ is treated in the same way. Here, and in what follows, we always assume that we can interchange the integral and the differentiation operators. Details of the computation can be found in the proof of Lemma~\ref{Liebend_approx}.

Next, $f^t$ is assumed to be in the \emph{diffusion limit}, which is physically at least justified for the inner core of the star. 
In order to derive the diffusion limit, a Legendre expansion of the scattering kernel $R(\mu,\mu',\omega)$ with respect to its angular dependence, truncated after the second term, is used in \cite[App.\;A]{LiebendoerferEtAl09big} for an approximation of the collision integral, see Subsection~\ref{Legendre}. In fact, this approximation is essential for the derivation of the diffusion limit and thus the resulting definition of the source term $\Sigma$. 
In \cite{BeFrGaLiMiVa12_esaimbig}, this derivation based on a Chapman--Enskog-like expansion of \eqref{Boltz} is performed in detail. In case of very high densities, even the diffusion term can be neglected. Then $f^t$ is in the reaction limit and satisfies a reaction equation. It is the purpose of this paper to give derivations of the diffusion, the reaction and the free streaming limit, to which we turn in the following, by mathematically concise applications of Chapman--Enskog expansions and Hilbert expansions, see Section~\ref{asymptotics}.



Secondly, the streaming particle component $f^s$ is assumed to be in the \emph{free streaming limit}. This justifies to neglect the collision integral in \eqref{decomposedstreaming}, which by \eqref{collision_vanishes}, however, vanishes anyway after angular integration. Furthermore, it justifies to neglect the dynamics of background matter so that one can use the laboratory frame formulation \eqref{Boltz_frozen} of \eqref{Boltz} with frozen matter as the left hand side of~\eqref{decomposedstreaming} (here, we also neglect the Lorentz transformation). For the same reason one can assume the free streaming particle component to be in the \emph{stationary state limit} and drop the time derivative in~\eqref{Boltz_frozen} which then, after angular integration, becomes the streaming particle equation
\begin{equation}
\label{eq:streaming}
\frac{1}{r^2}\frac{\partial}{\partial r}\left( \frac{r^2}{2}\int_{-1}^1f^s\mu d\mu\right) = -\frac{\tilde\chi}{2}\int_{-1}^1f^s d\mu+\Sigma\,.
\end{equation}
Due to \eqref{nabla_div_in_r} this equation can be reformulated as a Poisson equation 
for a spatial potential $\psi$ of the first angular moment of $f^s$.

For the solution of \eqref{eq:streaming}, the approximate relationship that has been suggested by Bruenn in~\cite{LiebendoerferEtAl04big},
\begin{equation}
\label{scattering_sphere}
\frac{1}{2}\int_{-1}^1f^s(\omega) \mu d\mu
=\frac{1}{2}\left(1+\sqrt{1-\left(\frac{R_\nu(\omega)}{\max(r,R_\nu(\omega))}\right)^2}\right)\frac{1}{2}\int_{-1}^1f^s(\omega) d\mu\, ,
\end{equation}
between the particle flux and the particle density of streaming neutrinos is used. Here, \mbox{$R_\nu(\omega)>0$} is the energy dependent radius of the neutrino scattering spheres. In addition to spherical symmetry, this approximation is based on the assumption that all free streaming particles of a given energy $\omega$ are emitted isotropically at their corresponding scattering sphere~\cite[p.\;1178]{LiebendoerferEtAl09big}. Hence, the flux $\frac{1}{2}\int_{-1}^1f^s(\omega) \mu d\mu$ can be approximated in terms of the particle density $\frac{1}{2}\int_{-1}^1f^s(\omega) d\mu$ by the geometrical relationship~\eqref{scattering_sphere}. The geometrical factor that correlates these terms is the only place in the IDSA where the angular distribution of neutrinos, namely of the free streaming neutrinos, is (approximatively) encoded. This factor is equal to $1/2$ if $r \leq R_{\nu}(\omega)$, which is a consequence of the isotropy of $f$ inside the scattering spheres, and increases up to $1$ in the limit $r \rightarrow \infty$. The latter expresses the fact that the neutrinos tend to stream radially outwards so that the distribution function $f$ accumulates at $\mu= 1$.

\subsubsection{Legendre expansion of the scattering kernel}
\label{Legendre}

As mentioned above, we now seek for an approximation of the collision integral suited to derive the diffusion limit and also the reaction limit of Boltzmann's equation that we envisage in the next section. The approximation is performed by a Legendre expansion of the scattering kernel. For an introduction to Legendre expansions by spherical harmonics we refer to \cite[pp.\;302, 391--395]{WhittakerWatson80}. Concretely, the Legendre series for $\frac{\omega^2}{c(hc)^3}R(\mu,\mu',\omega)$ reads
\begin{equation}
\label{Legendre_series}
\frac{\omega^2}{c(hc)^3}R(\mu,\mu',\omega) = \frac{1}{4\pi} \sum_{l=0}^{\infty}(2l+1)\phi_l(\omega)\int_{0}^{2\pi}P_l(\cos\theta)d\varphi\, ,
\end{equation}
with the Legendre polynomials $P_l$, $l=0,1,\ldots$, where $\theta$ is the angle between the incoming and the outgoing particle and 
\begin{equation}
\label{cos_in_Legendre}
\cos(\theta) = \mu\mu' + [(1-\mu^2)(1-\mu'^2)]^{\frac{1}{2}}\cos\varphi\,.
\end{equation}
With the first two Legendre polynomials given by
\begin{equation*}
P_0(\cos \theta) = 1\,,\quad
P_1(\cos \theta) = \cos\theta\,, 
\end{equation*}
truncation of the series after the second term gives the approximation
\begin{equation*}
\frac{\omega^2}{c(hc)^3}R(\mu,\mu',\omega) \approx \frac{1}{4\pi}\left(\phi_0(\omega)\int_0^{2\pi}1\, d\varphi + 3 \phi_1(\omega)\int_0^{2\pi}\cos(\theta)d\varphi\right)
=\frac{1}{2}\phi_0(\omega) + \frac{3}{2}\phi_1(\omega)\mu\mu'\,.
\end{equation*}
Inserting this into the collision integral \eqref{isoscattering}, without explicitly mentioning the dependency on $t$, $r$ and $\omega$, one obtains
\begin{equation}
\mathcal C(f) \approx \frac{1}{2}\int_{-1}^1 (\phi_0 + 3\phi_1\mu\mu')(f(\mu')-f(\mu))d\mu' = -\phi_0 f+\phi_0\frac{1}{2}\int_{-1}^1 fd\mu + 3\mu\phi_1\frac{1}{2}\int_{-1}^1 f\mu d\mu\, ,  \label{coll}
\end{equation}
which is an affine function in $\mu$ expressed in terms of $f$, the zeroth and first angular moments of $f$ and the coefficients $\phi_0$, $\phi_1$. {Together with the stimulated absorptivity~$\tilde\chi$, the latter give rise to the definition of the total neutrino \emph{opacity} $\tilde\chi+\phi_0-\phi_1$ and the definition of the neutrino \emph{mean free path}
\begin{equation}
\label{meanfreepath}
\lambda:= \frac{1}{j+\chi+\phi_0-\phi_1}=\frac{1}{\tilde\chi+\phi_0-\phi_1}\,,
\end{equation}
consult \cite[p.\;640]{MezzacappaBruenn93abig} and \cite[pp.\;1177,\,1188]{LiebendoerferEtAl09big}. 
This definition of the mean free path} is motivated by the fact that $\lambda/3$ occurs as the diffusion parameter in the diffusion limit of the Boltzmann equation that will be derived in the following section by asymptotic expansions (see \eqref{diff_react_weak}). It is clear that the smaller the diffusion parameter is, the smaller the diffusion of neutrinos represented by the diffusion term in \eqref{diff_react_weak} becomes which physically corresponds to a smaller mean free path. 

Finally, we remark that a collision kernel which can be expanded as in \eqref{Legendre_series} with \eqref{cos_in_Legendre} is always symmetric in $\mu$ and $\mu'$ since $\cos(\theta)$ in \eqref{cos_in_Legendre} has this property. For the same reason, \eqref{coll} as well as any truncation of the Legendre series in \eqref{Legendre_series} is symmetric in $\mu$ and $\mu'$. In particular, property \eqref{collision_vanishes} also holds for the approximation of the collision integral in~\eqref{coll}.

\subsubsection{Diffusion source and system of coupled equations}
\label{diffusion_source_coupled}

Using the truncated Legendre expansion of the collision kernel as in the previous subsection, the \emph{isotropic diffusion source} 
$\Sigma=\Sigma_{\rm{ids}}$ 
is derived in \cite[App.\;A]{LiebendoerferEtAl09big} by a Chapman--Enskog--like asymptotic expansion. The derivation that provides the diffusion source in a certain ``leading order approximation'' is explained in detail in~\cite{BeFrGaLiMiVa12_esaimbig}. Here, we quote the result which we will reobtain by Chapman--Enskog and Hilbert expansions in Section~\ref{asymptotics} where we will also explain the precise meaning of the ``leading order approximation''. The diffusion source reads
\begin{equation}
\label{Sigma_diff}
\Sigma_{\rm{ids}}:=-\frac{1}{r^2}\frac{\partial}{\partial r}\left(r^2\frac{\lambda}{3}\frac{\partial f^t}{\partial r}\right) + \frac{\tilde\chi}{2}\int_{-1}^1f^s d\mu\,.
\end{equation}
We emphasize that, in spite of \eqref{collision_vanishes}, the influence of the collision integral is contained ``in leading order'' in $\Sigma\left(\omega,f^t,\frac{1}{2}\int_{-1}^1 f^s d\mu,\vec u\right)$ by its dependence on $\vec u$ which contains the mean free path $\lambda$ and also the emissivity $j$ and the {stimulated absorptivity}~$\tilde\chi$.

By physical reasons outlined in \cite[p.\;1177]{LiebendoerferEtAl09big} and also justified in Subsections \ref{Sigma_free_streaming} and~\ref{Sigma_reaction}, the coupling term $\Sigma$ is limited from above by $j$ and from below by $0$, i.e., for the coupling of the trapped particle equation \eqref{eq:trapped} and the streaming particle equation \eqref{eq:streaming} the source term
\begin{equation}
\label{Sigma}
\Sigma := \min \left\{\max\left[\Sigma_{\rm{ids}}\,,\, 0\right],\, j \right\}
\end{equation}
is used in \cite{LiebendoerferEtAl09big}. As a result, one obtains the \emph{coupled system of equations} \eqref{eq:trapped}, \eqref{eq:streaming}, \eqref{Sigma} as the \emph{Isotropic Diffusion Source Approximation IDSA} of Boltzmann's equation~\eqref{Boltz}.

The \emph{limiters} $\Sigma=j$ in \eqref{eq:streaming} and $\Sigma=0$ in \eqref{eq:trapped} can be regarded as representing the free streaming limit and the reaction limit, respectively, of the Boltzmann equation~\eqref{Boltz}. In Section~\ref{asymptotics}, we will also derive these limits by Chapman--Enskog and Hilbert expansions under certain assumptions. A~mathematical motivation as to why these limits enter formula \eqref{Sigma} in the form of limiters for the source term $\Sigma$ will be addressed at the end of this paper (see Lemma~\ref{limiter_lemma}).

As a consequence of the limiters $j$ and $0$ in \eqref{Sigma}, one can consider three different regimes of the system \eqref{eq:trapped}, \eqref{eq:streaming} and \eqref{Sigma} depending on the three alternatives $\Sigma$ can assume in formula~\eqref{Sigma}. We call these regimes according to the corresponding limits of the Boltzmann equation \eqref{Boltz} and emphasize that transient situations that lie in between the limits are treated in the IDSA as belonging to one of the three regimes. In this sense the terms ``regime'' and ``limit'' have different meanings in the following.

\subsubsection{Diffusion regime: $\Sigma = \Sigma_{\rm{ids}}$}
\label{diffusion_source}

{If we consider the formulation in spherical symmetry in~\eqref{nabla_div_in_r}, we see that the first summand in~\eqref{Sigma_diff} represents a diffusion term. Moving this term to the left hand side of~\eqref{eq:trapped} gives a diffusion-reaction-type equation
\begin{equation}
\label{eq:trapped_diff}
\frac{d f^t}{cd t} + \frac{1}{3}\frac{d \ln\rho}{cd t}\omega\frac{\partial f^t}{\partial \omega}-\frac{1}{r^2}\frac{\partial }{\partial r} \left(r^2\frac{\lambda}{3}\frac{\partial f^t}{\partial r}\right) = j-\tilde \chi\left( f^t+\frac{1}{2}\int_{-1}^1 f^s d\mu\right)
\end{equation}
for $f^t$, balanced by the emissivity $j$ and an effective absorption $-\tilde \chi\left(f^t+\frac{1}{2}\int_{-1}^1 f^s d\mu\right)$ of trapped and streaming neutrinos.} Consequently, in the diffusion regime there is only absorption but no emission of streaming particles. However, trapped particles can be converted into streaming particles. The equation for the latter reduces to
\begin{equation}
\label{eq:streaming_diff}
\frac{1}{2}\int_{-1}^1f^s\mu d\mu=-\frac{\lambda }{3}\frac{\partial f^t}{\partial r}\,,
\end{equation}
i.e., $-\frac{\lambda}{3} f^t$ is a potential of the streaming particle density. In the diffusion limit, where one can assume $r<R_\nu$ in \eqref{scattering_sphere}, this flux is just half of the particle density. 
The diffusion regime turns out to be the only regime where the equations for $f^t$ and $f^s$, i.e., \eqref{eq:trapped_diff} and \eqref{eq:streaming_diff}, are effectively coupled.

Since the basic ansatz \eqref{decomposition}--\eqref{decomposedstreaming} for the IDSA assumes an additive coupling term $\Sigma$ in the equations for the trapped and the streaming particle component on the whole domain, i.e., also in regimes where \eqref{Boltz} cannot be supposed to be in the diffusion limit, the diffusion regime of the IDSA is likely to be used where the diffusion limit does not apply. For more extreme cases, i.e., where the limiters in \eqref{Sigma} apply, the diffusion source is altered in order to account for other limits of the Boltzmann equation to which we turn now.

\subsubsection{Free streaming regime: $\Sigma=j$}
\label{Sigma_free_streaming}

If we neglect the dynamics of the background matter, i.e., if we assume $\rho$ to be constant in~\eqref{eq:trapped},  
this equation has a solution $f^t$ that decreases exponentially if we have $\Sigma=j$ and even more so if $\Sigma>j$. Therefore, the trapped particle component $f^t$ vanishes quickly in the case $\Sigma\geq j$, so that its equation \eqref{eq:trapped} as part of the Boltzmann equation \eqref{Boltz} can eventually be neglected. However, neglecting the right hand side of \eqref{eq:trapped} with vanishing $f^t$ necessarily leads to $\Sigma=j$ so that \eqref{Boltz} is eventually approximated by equation \eqref{eq:streaming} for the streaming particle component, in which one needs to have $\Sigma=j$. Physically, one can argue that for large mean free paths~$\lambda$, the streaming particle component is supposed to dominate the trapped particle component and all emitted neutrinos should directly become streaming. The case $\Sigma>j$ would result in even faster decay of $f^t$ and an effective additional unphysical source of streaming particles in \eqref{eq:streaming}. Therefore, the coupling term $\Sigma$ in \eqref{Sigma} is limited from above by $j$.

Consequently, in the free streaming regime $\Sigma=j$ the equation for the trapped particles reads
\begin{equation*}
\frac{d f^t}{cd t} + \frac{1}{3}\frac{d \ln\rho}{cd t}\omega\frac{\partial f^t}{\partial \omega} = -\tilde \chi f^t 
\end{equation*}
while the equation for the streaming particles is given by 
\begin{equation*}
\frac{1}{r^2}\frac{\partial}{\partial r}\left(\frac{r^2}{2}\int_{-1}^1f^s\mu d\mu\right) = j-\frac{\tilde\chi}{2}\int_{-1}^1 f^s d\mu\,.
\end{equation*}

The two equations are obviously uncoupled. Finally, we note that one would expect the free streaming limit to occur outside the neutrinosphere, i.e., with $r>R_\nu$ in \eqref{scattering_sphere}.

\subsubsection{Reaction regime: $\Sigma = 0$}
\label{Sigma_reaction}

One can argue that the lower limiter $\Sigma = 0$ in \eqref{Sigma} represents the regime in which the roles of $f^t$ and $f^s$ are changed compared to the streaming regime, reflected by the right hand sides of the corresponding equations. Now the trapped particles dominate the streaming particles which can only be absorbed. 

The equation for the trapped neutrinos reads
\begin{equation}
\label{trapped_reaction}
\frac{d f^t}{cd t} + \frac{1}{3}\frac{d \ln\rho}{cd t}\omega\frac{\partial f^t}{\partial \omega} = j-\tilde \chi f^t 
\end{equation}
while the equation for the streaming neutrinos takes the form
\begin{equation*}
\frac{1}{r^2}\frac{\partial}{\partial r}\left(\frac{r^2}{2}\int_{-1}^1f^s\mu d\mu\right) = -\frac{\tilde\chi}{2}\int_{-1}^1 f^s d\mu\,.
\end{equation*}
As in the free streaming regime, these equations are uncoupled. The case $\Sigma<0$ would result in an additional source for trapped neutrinos induced by the scattering of streaming neutrinos that would eventually lead to an unrealistically big trapped particle component. Note that with $\Sigma=0$, the stationary state of the trapped neutrino equation 
is given by $f^t=j/\tilde\chi$ which is a Fermi--Dirac function, i.e., the right distribution function for the trapped particles in thermal equilibrium, cf.\ \cite[p.\;822]{Bruenn85} and \cite[p.\;1177]{LiebendoerferEtAl09big}. {We call the case $\Sigma=0$ reaction regime since here the reactions \eqref{reactions} encoded in $j$ and $\tilde\chi$ drive the time evolution of $f^t$ together with the
second term on the left hand side of \eqref{trapped_reaction}. The latter describes the redistribution of neutrinos in energy space if the gas is
compressed or expanded (and it occurs in the trapped particle equation for the diffusion and the free streaming regime, too).} 
Finally, as in the diffusion limit, we expect the reaction limit to prevail inside the neutrinosphere, i.e., with $r<R_\nu$ in \eqref{scattering_sphere}. 

{
\begin{remark}
Note that the right hand side of \eqref{trapped_reaction} has the same structure as the right hand side of the model equations~\cite[Sec.\;8]{CercignaniKremer02}. In fact, if we neglect collisions in the Boltzmann equation~\eqref{Boltz}, the stimulated absorptivity $\tilde\chi=j+\chi$ turns out to be an approximation of the neutrino opacity, i.e., the inverse of the mean free path $\lambda$ in~\eqref{meanfreepath}. The right hand side of \eqref{trapped_reaction} can then be approximated by $(j/{\tilde\chi}-f^t)/\lambda$ with the equilibrium function~$j/{\tilde\chi}$. Since the Boltzmann equation \eqref{Boltz} as well as \eqref{trapped_reaction} are scaled with the factor $c^{-1}$, we can write $\lambda=c\tau$ with a mean free time $\tau$ and identify $\tau$ as the relaxation time if we interpret the term $(j/{\tilde\chi}-f^t)/\lambda$ as the right hand side of a model equation.
\end{remark}
}


\subsubsection{Mathematical issues}
\label{issues}

The main problem of the IDSA is its treatment of transient regions in between the parts of the domain where neither the diffusion, the free streaming nor the reaction limit applies. Then, nevertheless, one of the above regimes occurs. A discussion of this topic by numerical results comparing the IDSA with the full Boltzmann equation in the spherically symmetric case is given in~\cite{LiebendoerferEtAl09big}. In particular, the transition from the diffusive to the free streaming limit in a ``semi-transparent'' regime across the neutrino sphere is quite sensitive and physically essential. The numerical results are satisfactory but exhibit the biggest deviations between the IDSA and the full Boltzmann equation in these transition regimes. Similar results have been found in a numerical example for a model problem in~\cite{BeFrGaLiMiVa12_esaimbig}.

In \cite{LiebendoerferEtAl09big} the diffusion limit is derived by a ``leading order'' Chapman--Enskog--like expansion of the Boltzmann equation \eqref{Boltz} in which some higher order terms are neglected. However, the solvability hypothesis of the Chapman--Enskog expansion~\eqref{beta_0=beta} that requires the zeroth angular moment of $f$ to be equal to the zeroth angular moment of $f_0$ might be violated by physical reasons since $f_0$ only depends on stationary quantities whereas $f$ represents the conservative evolution of neutrinos in time. Further details concerning these issues, e.g., a weakened form of the solvability hypothesis, are given in Subsection~\ref{Chapman--Enskog}. In Subsection~\ref{Hilbert}, we present a derivation of the diffusion limit by a Hilbert expansion of the time-scaled Boltzmann equation~\eqref{Boltz} without the need to neglect terms.

The limiters for the coupling term in \eqref{Sigma} can be physically and mathematically motivated (see above and Lemma~\ref{limiter_lemma}), but there does not seem to be a rigorous explanation of these limitations by means of the full Boltzmann equation. In Subsection~\ref{Hilbert}, we obtain these limiters, which represent the free streaming and the reaction limits, respectively, by corresponding Hilbert expansions of \eqref{Boltz}.

Besides the fact that the density range in a core-collapse supernova comprises several orders of magnitude, essential processes described by the coupled problem \eqref{Hydro}--\eqref{Boltz} occur on different time scales that, moreover, depend on where in the star the processes take place. Concretely, the reaction and collision time scale given by the interaction rates $j$ and $\tilde\chi$ as well as the opacities $\phi_0$ and $\phi_1$ describes the fastest processes of \eqref{Boltz} at the center of the star. In the semi-transparent regime it becomes slower than the transport time scale represented by the second term in \eqref{Boltz} as well as the term $F_\mu^0$. In the outer layers it also becomes slower than the hydrodynamic time scale given by the terms involving $\rho$ and $v$ in \eqref{Boltz}. The hydrodynamic time scale is slowest at low density and reaches almost $c$ at the center. In contrast, the diffusion time scale is slowest at the center and reaches almost $c$ in the streaming regime. We also use time scaling in the Hilbert expansions for the derivations of the reaction, diffusion and free streaming limits in Subsection~\ref{Hilbert}.


\section{Asymptotic Behaviour of Radiative Transfer and IDSA}
\label{asymptotics}

The main purpose of this section is to present different derivations of the diffusion source $\Sigma=\Sigma_{\rm{ids}}$ given in Subsection \ref{diffusion_source} with the help of asymptotic analysis. In a first approach, we apply a Chapman--Enskog expansion to Boltzmann's equation~\eqref{Boltz}. Using weakened Chapman--Enskog hypotheses, we give an explanation of how to obtain the results for the diffusion limit in \cite[App.\;A]{LiebendoerferEtAl09big} within this framework. As a by-product, we also see how the reaction limit in Subsection~\ref{Sigma_reaction} can be explained in terms of a Chapman--Enskog expansion. 

In a second approach, we apply Hilbert expansions to~\eqref{Boltz} which for the reaction and the diffusion limit exhibit strong correlations with the Chapman--Enskog expansions while at the same time they require less assumptions. In this framework, we also introduce variations of different scalings for time and for the right hand side of the Boltzmann equation. This allows us to quite naturally rederive the diffusion limit, but now up to a ``clean'' $\BigO(\eps^2)$ order of the scaled Boltzmann equation with the scaling parameter $\eps>0$. Furthermore, using a Hilbert expansion just for the time scaled Boltzmann equation, we derive the free streaming limit as well as the limiter $\Sigma=j$ for the streaming regime given in Subsection~\ref{Sigma_free_streaming}.

\subsection{Chapman--Enskog expansion}
\label{Chapman--Enskog}

The Chapman--Enskog expansion is a well-established method for solving the Boltzmann equation for non-uniform gases and in order to derive the Euler and the Navier--Stokes equations as its first and second order approximations. It was found independently by S.~Chapman \cite{Chapman16big,Chapman17big} and D.~Enskog~\cite{Enskog17} in the 1910's. We apply it to our version \eqref{Boltz} of the Boltzmann equation for neutrinos. Our use of the method is based on the presentation in~\cite[pp.\;115--122]{FerzigerKaper72} that focusses on Enskog's approach.

If the gas of neutrinos is in equilibrium, the distribution function is given by a Fermi--Dirac distribution, see, e.g., {\cite[p.\;52]{CercignaniKremer02} or}~\cite[p.\;1179]{LiebendoerferEtAl09big}. Concretely, this means that $f=j/\tilde\chi$ and $\mathcal{D}(f)=\mathcal{C}(f)=0$ in \eqref{Boltz}, compare \cite[p.\;1177]{LiebendoerferEtAl09big}. In case of small deviations from the equilibrium, we expect the emission and absorption rates as well as the opacities in the collision integral $\mathcal{C}(f)$, i.e., the right hand side of \eqref{Boltz}, to remain the dominant terms in \eqref{Boltz} that drive the gas to equilibrium. (This reflects the fact that the corresponding processes occur on the fastest time scale.) Therefore, a small parameter $\varepsilon>0$ is introduced that emphasizes the weight of these terms on the right hand side of the Boltzmann equation, i.e., we write
\begin{equation}
\label{Boltzmann_eps}
\mathcal D(f) = \varepsilon^{-1} \left(\bar{j}+ \bar{\mathcal J}(f) \right)\,.
\end{equation}
Here, we denote $\bar{j}= \varepsilon j$ as well as $\bar{\tilde\chi}=\varepsilon\tilde\chi$ and $\bar{R}=\varepsilon R$, or, in particular, $\bar{\phi}_0=\varepsilon\phi_0$ and $\bar{\phi}_1=\varepsilon\phi_1$ in the case~\eqref{coll}, for the definition of $\bar{\mathcal J}=\varepsilon\mathcal J$ according to \eqref{Boltz} and~\eqref{isoscattering}. We assume the terms with a bar to be independent of~$\varepsilon$ which for small $\varepsilon$ results in a small mean free path $\lambda=\eps (\bar{\tilde{\chi}}-\bar\phi_0+\bar\phi_1)^{-1}$ according to~\eqref{meanfreepath}. 
{
\begin{remark}
Equation \eqref{Boltzmann_eps} is an example of a scaled Boltzmann equation in which the small parameter $\varepsilon$ is known as the Knudsen number, cf.~\cite{SpeckStrain10} or~\cite[p.\;128]{CercignaniKremer02}. The Knudsen number can be regarded as the ratio of the mean free path and the considered macroscopic length. In the diffusion and reaction regimes this value will be small, whereas in the free streaming regime it can be of order $\BigO(1)$. Therefore, in the scalings introduced in the next section for the free streaming regime, the parameter $\varepsilon$ can no longer be interpreted as the Knudsen number.
\end{remark}}
 
Now we look for a solution of \eqref{Boltzmann_eps} in the form of a power series expansion with respect to $\varepsilon$, i.e.,
\begin{equation}
\label{f_expanded}
f = f_0 + \varepsilon f_1 + \varepsilon^2 f_2 + \ldots\,.
\end{equation}
Enskog postulated that the distribution function and, therefore, all $f_i$ functions do not depend explicitly on time but only implicitly via their dependence on macroscopic observables (and their spatial gradients) obtained by velocity moments of $f$ weighted with the collisional invariants. This can be justified both mathematically and physically \cite[p.\;116]{FerzigerKaper72}. Here, we do not analyze the analogue of collisional invariants for our case \eqref{Boltz} but restrict ourselves to the trivial invariant function $1$ (which is invariant with respect to any variable). Therefore, we only consider the velocity moment 
$$
\beta := \frac{1}{2}\int_{-1}^1 f d\mu\quad\mbox{with}\quad \beta=\beta(r,\omega, t)
$$ 
and impose the assumption 
\begin{equation}
\label{f_dep_beta}
f(r, \mu, \omega, t) = f\big(r, \mu, \omega \,|\, \beta, \mbox{$\frac{\partial\beta}{\partial r}$}, \mbox{$\frac{\partial^2\beta}{\partial r^2}$},\ldots\big)
\end{equation}
as our equivalent of Enskog's postulate. We can regard $\beta(r,t,\omega)$ as a measure for the particle density of neutrinos for a certain energy $\omega$. Its definition and the expansion of $f$ provides $\beta_i$, $i\in\N_0$, with $\beta_i=\frac{1}{2}\int_{-1}^1 f_i d\mu$ such that we formally obtain the series 
\begin{equation}
\label{beta_expanded}
\beta = \beta_0 + \varepsilon \beta_1 + \varepsilon^2 \beta_2 + \ldots\,,
\end{equation}
and thus the flux $\Phi$, defined by
$$
\Phi\big(r,\omega \,|\, \beta, \mbox{$\frac{\partial\beta}{\partial r}$}, \mbox{$\frac{\partial^2\beta}{\partial r^2}$}, \ldots\big):=\frac{\partial \beta}{\partial t} =\frac{\partial \beta_0}{\partial t}+\varepsilon\frac{\partial \beta_1}{\partial t}+\varepsilon^2\frac{\partial \beta_2}{\partial t}+ \ldots\,.
$$
Consequently, taking the chain rule into account, we express the partial time derivative of $f$ by the formal series
$$
\frac{\partial f}{\partial t}
=\sum_{k=0}^\infty\frac{\partial f}{\partial \frac{\partial^k\beta}{\partial r^k}}\cdot\frac{\partial}{\partial t}\frac{\partial^k\beta}{\partial r^k}
=\sum_{i=0}^\infty\varepsilon^i\sum_{k=0}^\infty\frac{\partial^k}{\partial r^k}\frac{\partial\beta_i}{\partial t}\cdot\frac{\partial f}{\partial \frac{\partial^k\beta}{\partial r^k}}\,.
$$
Inserting the series \eqref{f_expanded} for $f$ in here leads to the Cauchy product
\begin{equation}
\label{dt_Cauchy}
\frac{\partial f}{\partial t} = \frac{\partial_0 f_0}{\partial t} + \varepsilon \left[ \frac{\partial_1 f_0}{\partial t}+\frac{\partial_0 f_1}{\partial t}\right] + \varepsilon^2 \left[ \frac{\partial_2 f_0}{\partial t}+\frac{\partial_1 f_1}{\partial t}+\frac{\partial_0 f_2}{\partial t}\right] + \dots
\end{equation}
where the operators $\frac{\partial_i}{\partial t}$ are defined by
\begin{equation}
\label{partial_i}
\frac{\partial_i}{\partial t}:=\sum_{k=0}^\infty\frac{\partial^k}{\partial r^k}\frac{\partial\beta_i}{\partial t}\cdot\frac{\partial }{\partial \frac{\partial^k\beta}{\partial r^k}}\,,\quad\quad i=0,1,\dots\,,
\end{equation}
with the coefficients $\frac{\partial\beta_i}{\partial t}$ from the flux expansion.

With the expansion \eqref{dt_Cauchy}, \eqref{partial_i} 
of the partial time derivative operator applied to~$f$, 
we can now expand the operator on the left hand side of \eqref{Boltz} and write 
\begin{equation}
\label{D_Chapman_Enskog}
\mathcal D(f) = \mathcal D(f)^{(0)} +\varepsilon\mathcal D(f)^{(1)} +\varepsilon^2\mathcal D(f)^{(2)} +\dots
\end{equation}
with 
\begin{equation}
\label{Di_Chapman_Enskog}
\mathcal D(f)^{(i)} =  \sum_{l=0}^i\frac{\partial_{i-l}f_{l}}{c\partial t}
+v\frac{\partial f_{i}}{c\partial r}+ \mu \frac{\partial f_{i}}{\partial r} +F_\mu\frac{\partial f_{i}}{\partial \mu}+F_\omega\frac{\partial f_{i}}{\partial \omega}\,,
\end{equation}
using \eqref{Lagrange}. The right hand side of \eqref{Boltzmann_eps} is formally expanded by linearity of $\bar{\mathcal J}$, i.e., 
\begin{equation*}
\bar{\mathcal J}(f) = \bar{\mathcal J}(f_0) +\varepsilon\bar{\mathcal J}(f_1) +\varepsilon^2\bar{\mathcal J}(f_2) +\dots\,.
\end{equation*}

Collecting all terms of the same order $i-1$, $i=0,1,\ldots$, in $\varepsilon$ on both sides of the expanded equation \eqref{Boltzmann_eps} we get the hierarchy of equations
\begin{eqnarray}
\label{CEHie1}
0&=&j+\mathcal J(f_0)\,,\\[1mm]
\label{CEHie2}
\mathcal D(f)^{(i-1)} &=& \bar{\mathcal J}(f_i)\,,\quad\quad i = 1,2, \dots\,.
\end{eqnarray}
These represent integral equations for the successive terms $f_i$ in the power series
expansion of $f$. 
As in the Chapman--Enskog expansion for non-uniform gases, necessary conditions for the solvability of \eqref{CEHie1} or~\eqref{CEHie2} are obtained by multiplication of these equations with the invariant function $1$ and integration over $\mu$. 
As a result, by \eqref{collision_vanishes}, we get the \emph{compatibility equations}
\begin{eqnarray}
\label{CEHie_int_1}
0 &=& j -\frac{\tilde\chi}{2}\int_{-1}^1 f_0 d\mu\,,\\[2mm]
\frac{1}{2}\int_{-1}^{1}\mathcal D(f)^{(i-1)}d\mu &=& -\frac{\bar{\tilde\chi}}{2}\int_{-1}^1 f_i d\mu\,,\quad\quad i = 1,2,\dots\,.
\label{CEHie_int_2}
\end{eqnarray}
 
\begin{remark}
If we do not apply the sophisticated operator expansion \eqref{D_Chapman_Enskog} but use $\mathcal D(f_i)$ instead of $\mathcal D(f)^{(i)}$, motivated by linearity of $\mathcal D$, the analogous construction leads to the same hierarchy of equations as in a Hilbert expansion with scaled reactions and collisions. See Subsection~\ref{Hilbert} as well as Lemma~\ref{strong_lemma} and Remark~\ref{strong_remark} for the analogies and the differences between the two expansions.
\end{remark}

As mentioned earlier, except for the geometrical relationship \eqref{scattering_sphere}, the IDSA does not rely on information how $f$ behaves with respect to~$\mu$. Therefore, it is enough to consider the angularly integrated Boltzmann equation~\eqref{Boltz} only, compare also Remark~\ref{supernova}. In the following, our aim is to compute an approximation of the left hand side of this angularly integrated equation, which is also known as the total interaction rate $s$ given by
\begin{equation}
\label{sequ0}
s:=\frac{1}{2}\int_{-1}^1\mathcal D(f)d\mu = \frac{1}{2}\int_{-1}^{1}\sum_{i=1}^\infty\left(\varepsilon^{(i-1)}\mathcal D(f)^{(i-1)}\right)d\mu 
=\frac{\bar j}{\varepsilon}- \frac{\bar{\tilde\chi}}{2\varepsilon}\int_{-1}^1\sum_{i=0}^\infty \varepsilon^{i} f_i  d\mu = \frac{\bar j}{\varepsilon}- \frac{\bar{\tilde\chi}}{2\varepsilon}\int_{-1}^1  f  d\mu\,.
\end{equation}
In here, and in what follows, we always use formal series and assume their interchangeability with integral operators and partial derivations and interchangeability of the latter operators. Furthermore, we do not use the notation $\mathcal{O}(\varepsilon^k)$ for the $k$-th order, $k\geq 1$, in a rigid mathematical sense but only to express that we consider the corresponding series up to the order $\varepsilon^{k-1}$ while neglecting the rest of the series that starts with the order~$\varepsilon^k$. Since we want to compute $s$ up to second order, we need to handle angular means of $\mathcal D(f)^{(i)}$. This can be difficult because of the sophisticated treatment of the partial time derivative. Therefore, we base our considerations on the following two lemmas.

\if 0
{\bf Strong hypothesis}
In order to solve this problem, it is convenient  to define the successive terms in the power series expansion of $f$ such that only $f_0$ contributes to the integral, i.e.
\begin{eqnarray}
\frac{1}{2}\int_{-1}^1f_0 d\mu &=& \beta(r,\omega,t)\\
\frac{1}{2}\int_{-1}^1f_i d\mu &=& 0\quad \text{for}\quad i=1,2,\dots
\end{eqnarray}

{\bf Weak hypothesis}
$f = f^t + f^s$, $\beta = \beta^t +\beta^s$, $\beta^s = \BigO(\varepsilon)$ and $\frac{\partial \beta^s}{\partial \beta} = \BigO(\varepsilon^2)$.
\begin{eqnarray}
\frac{1}{2}\int_{-1}^1f_0 d\mu &=& \beta^t(r,\omega,t)= \frac{1}{2}\int_{-1}^1 f^t d\mu\\
\sum_{i=1}^{\infty}\left(\frac{1}{2}\int_{-1}^1\varepsilon^i f_i d\mu\right) &=&  \beta^s(r,\omega,t)= \frac{1}{2}\int_{-1}^1 f^s d\mu
\end{eqnarray}

\fi

\begin{lemma}
\label{strong_lemma}
If, apart from Enskog's postulate \eqref{f_dep_beta}, we assume the \emph{strong Chapman--Enskog hypothesis}
\begin{eqnarray}
\label{beta_0=beta}
\frac{1}{2}\int_{-1}^1f_0 d\mu &=& \beta\,,\\[2mm]
\label{beta_i=0}
\frac{1}{2}\int_{-1}^1f_i d\mu &=& 0\,,\quad\quad i=1,2,\dots\,,
\end{eqnarray}
then we have
$$
\frac{1}{2}\int_{-1}^1\mathcal D(f)^{(i)}d\mu =  \frac{1}{2}\int_{-1}^1\mathcal D(f_i)d\mu\,,\quad\quad i=0,1,\dots\,,
$$
i.e., after angular integration, the Chapman--Enskog expansion has the same form as the Hilbert expansion.
\end{lemma}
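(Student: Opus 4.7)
The plan is to compare $\mathcal{D}(f)^{(i)}$ from \eqref{Di_Chapman_Enskog} term by term with $\mathcal{D}(f_i)$ obtained by applying the left-hand side of \eqref{Boltz} directly to $f_i$. Unfolding the Lagrangian derivative as $\frac{1}{c}\frac{df_i}{dt}=\frac{1}{c}\frac{\partial f_i}{\partial t}+\frac{v}{c}\frac{\partial f_i}{\partial r}$, one sees that the advection term $v\frac{\partial f_i}{c\partial r}$, the streaming term $\mu\frac{\partial f_i}{\partial r}$, the angular term $F_\mu\frac{\partial f_i}{\partial \mu}$ and the energy-shift term $F_\omega\frac{\partial f_i}{\partial \omega}$ appear identically in both expressions. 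Hence it suffices to show that, after angular integration, the Chapman--Enskog time derivative $\sum_{l=0}^i\frac{\partial_{i-l}f_l}{c\partial t}$ matches $\frac{1}{c}\frac{\partial f_i}{\partial t}$.

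For the right-hand side, interchanging integration and the partial derivative in $t$ yields
\begin{equation*}
\frac{1}{2}\int_{-1}^1\frac{\partial f_i}{c\partial t}\,d\mu=\frac{1}{c}\frac{\partial\beta_i}{\partial t}
\end{equation*}
directly from $\beta_i=\frac{1}{2}\int_{-1}^1 f_i\,d\mu$. For the left-hand side, I would substitute the chain-rule expression \eqref{partial_i} and, for each summand indexed by $l$, commute the $\mu$-integration with the formal partial derivatives $\frac{\partial}{\partial(\partial^k\beta/\partial r^k)}$, which is permitted under the interchangeability conventions stated after \eqref{sequ0}. This gives
\begin{equation*}
\frac{1}{2}\int_{-1}^1\frac{\partial_{i-l}f_l}{\partial t}\,d\mu
=\sum_{k=0}^\infty\frac{\partial^k}{\partial r^k}\frac{\partial\beta_{i-l}}{\partial t}\cdot\frac{\partial\beta_l}{\partial(\partial^k\beta/\partial r^k)}.
\end{equation*}

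Now the strong Chapman--Enskog hypothesis \eqref{beta_0=beta}--\eqref{beta_i=0} is invoked: for every $l\geq 1$ one has $\beta_l\equiv 0$, so the entire right-hand side vanishes; for $l=0$ one has $\beta_0=\beta$, and since Enskog's postulate \eqref{f_dep_beta} treats $\beta,\partial_r\beta,\partial_r^2\beta,\ldots$ as independent arguments, the formal derivative $\frac{\partial\beta}{\partial(\partial^k\beta/\partial r^k)}$ collapses to the Kronecker delta $\delta_{k,0}$. Only the summand with $l=0$ and $k=0$ survives, contributing exactly $\frac{\partial\beta_i}{\partial t}$, which matches the right-hand side. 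Summing over $l$ and dividing by $c$ completes the comparison.

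The only delicate point, and the one I would flag explicitly, is the status of $\frac{\partial}{\partial(\partial^k\beta/\partial r^k)}$ as a genuine partial derivative with respect to independent arguments, together with its commutation with $\int_{-1}^1\cdot\,d\mu$. These are precisely the formal conventions already adopted for the whole Chapman--Enskog construction in the paper, so no additional analytic justification is needed. Modulo those conventions, the argument reduces, after the chain-rule substitution, to the Kronecker-delta collapse just described.
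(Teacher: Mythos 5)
Your proposal is correct and follows essentially the same route as the paper's proof: reduce the claim to the time-derivative terms, insert the chain-rule expression \eqref{partial_i}, use the independence of $\beta$ and its spatial derivatives from Enskog's postulate to kill all $k>0$ summands, and use the strong hypothesis to kill all $l\geq 1$ summands, leaving only $\frac{\partial\beta_i}{\partial t}$. The paper merely organizes the endgame slightly differently (treating $i=0$ and $i>0$ separately and observing that for $i>0$ both sides vanish), whereas your uniform $(l,k)=(0,0)$ collapse covers all cases at once.
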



\begin{proof}
By the form of $\mathcal{D}(f_i)$, cf.~\eqref{Boltz}, and $\mathcal D(f)^{(i)}$ in \eqref{Di_Chapman_Enskog}, we only need to prove
$$
\frac{1}{2}\int_{-1}^1\left(\mathcal D(f)^{(i)}-
\mathcal D(f_i)\right)d\mu=
\frac{1}{2}\int_{-1}^1 \left(\sum_{l=0}^i\frac{\partial_{i-l}f_{l}}{c\partial t}
-\frac{\partial f_i}{c\partial t}\right)d\mu=0\,,
$$
i.e., after elimination of the constant $c$ and formal interchange of integration and differentiation,
\begin{equation}
\label{ChapmanEnskogISHilbert}
\sum_{l=0}^i\frac{\partial_{i-l}\beta_{l}}{\partial t}
=\frac{\partial \beta_i}{\partial t}\,.
\end{equation}



By \eqref{partial_i} we obtain $\frac{\partial_0 \beta}{\partial t} = \frac{\partial \beta_0}{\partial t}$ since $\beta$ and its spatial derivatives are assumed to be independent variables in our version \eqref{f_dep_beta} of the Enskog postulate. Therefore, all summands in \eqref{partial_i} for $k>0$ vanish. Then, hypothesis~\eqref{beta_0=beta} provides the statement for $i=0$.

For $i>0$ and $0\leq l\leq i$ we infer by \eqref{partial_i} that
\begin{equation}
\label{partial_i-l_vanishes}
\frac{\partial_{i-l} \beta_l}{\partial t} = \frac{\partial \beta_{i-l}}{\partial t}\frac{\partial \beta_{l}}{\partial \beta}\,.
\end{equation}
Here, as for $i=0$, all other summands in \eqref{partial_i} for $k>0$ are zero since $\beta$ and, therefore, all $\beta_i$ do not depend on the spatial derivatives of $\beta$ by postulate~\eqref{f_dep_beta}. In~\eqref{partial_i-l_vanishes} at least one of the terms $\beta_{i-l}$ and $\beta_{l}$ vanishes by hypothesis \eqref{beta_i=0} so that the whole expression becomes zero. Therefore, \eqref{ChapmanEnskogISHilbert} is trivially satisfied.
\end{proof}

\begin{remark}
\label{strong_remark}
The question of unique solvability of the integral equations in the hierarchy \eqref{CEHie1} and~\eqref{CEHie2} is the origin of the strong Chapman--Enskog hypothesis \eqref{beta_0=beta},~\eqref{beta_i=0} in gas dynamics. Concretely, in case of non-uniform gases, the compatibility conditions \eqref{CEHie_int_1} and~\eqref{CEHie_int_2} are not sufficient for unique solvability of the corresponding original hierarchy equations \eqref{CEHie1} and~\eqref{CEHie2}. Instead, one is free to require all the macroscopic information $\beta$ to be already contained in~$f_0$, i.e., the strong Chapman--Enskog hypothesis, and thus one obtains unique solvability of \eqref{CEHie1} and~\eqref{CEHie2}. In contrast, for the Hilbert expansion, the corresponding compatibility conditions are indeed necessary and sufficient conditions for the unique solvability of the corresponding hierarchy equations. Existence of solutions is provided within the theory of Fredholm integral equations by the compatibility conditions that can be regarded as orthogonality conditions for the collisional invariants. Hilbert then proved uniqueness of the solutions if initial values are assigned to all $\beta_i$, $i\in\N_0$. More details can be found in \cite[pp.\;111--113]{FerzigerKaper72} for the Hilbert theory and in \cite[pp.\;118/9]{FerzigerKaper72} for the Chapman--Enskog theory.\\ 
Lemma~\ref{strong_lemma} also holds if $\beta$ is a vector corresponding to collisional invariants since then \eqref{partial_i-l_vanishes}, i.e., the term for $k=0$ in the series \eqref{partial_i}, has the form
\begin{equation*}
\frac{\partial_{i-l} \beta_l}{\partial t} = \frac{\partial \beta_{i-l}}{\partial t}\nabla_\beta\beta_{l}
\end{equation*}
and thus vanishes componentwise, too. We refer to \cite[p.\;117]{FerzigerKaper72} for more details.\\
Finally, even if the form of the Chapman--Enskog and the Hilbert expansions are the same, Enskog's hypothesis~\eqref{f_dep_beta}, which is not used in Hilbert expansions, makes a big difference inasmuch each $f_i$ in the expansion \eqref{f_expanded} of $f$ depends implicitly on every other $f_i$ by its dependence on $\beta$, i.e., on all $\beta_i$, $i=0,1,\ldots$, in~\eqref{f_dep_beta}.
\end{remark}

\begin{lemma}
\label{weak_lemma}
We consider the decomposition
$f = f^t + f^s$ inducing $\beta = \beta^t +\beta^s$ with the angular means $\beta^t=\frac{1}{2}\int_{-1}^1 f^t d\mu$ and $\beta^s=\frac{1}{2}\int_{-1}^1 f^s d\mu$. We assume Enskog's postulate~\eqref{f_dep_beta} and the \emph{weakened Chapman--Enskog hypothesis}
\begin{eqnarray*}
\frac{1}{2}\int_{-1}^1f_0 d\mu &=& \beta^t\,,\quad \mbox{ i.e., }\quad \beta_0=\beta^t\,,\\[2mm] 
\sum_{i=1}^{\infty}\left(\frac{1}{2}\int_{-1}^1\varepsilon^i f_i d\mu\right) &=&  \beta^s\,,\quad \mbox{ i.e., }\quad \sum_{i=1}^{\infty}\varepsilon^i\beta_i=\beta^s\,.
\end{eqnarray*}
Then with the assumptions
\begin{equation}
\label{eps2ass}
\frac{\partial \beta^s}{\partial \beta} = \BigO(\varepsilon^2)\quad\mbox{and}\quad\frac{\partial \beta_1}{\partial \beta}=\BigO(\varepsilon^2)
\end{equation}
we obtain the relations
\begin{equation}
\label{D01CEH}
\frac{1}{2}\int_{-1}^1\mathcal D(f)^{(i)}d\mu =  \frac{1}{2}\int_{-1}^1\mathcal D(f_i)d\mu+\BigO(\varepsilon^2)\,,\quad\quad i\in\{0,1\}\,,
\end{equation}
and with the assumption
\begin{equation}
\label{eps1ass}
\frac{\partial \beta^s}{\partial \beta} = \BigO(\varepsilon)
\end{equation}
we obtain
\begin{equation}
\label{D0CEH}
\frac{1}{2}\int_{-1}^1\mathcal D(f)^{(0)}d\mu =  \frac{1}{2}\int_{-1}^1\mathcal D(f_0)d\mu+\BigO(\varepsilon)\,.
\end{equation}
\end{lemma}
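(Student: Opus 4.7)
My plan is to follow the same structural argument as in the proof of Lemma~\ref{strong_lemma}, keeping track of the non-vanishing residual that now arises because $\beta_0$ is no longer equal to $\beta$. Since $\mathcal{D}(f)^{(i)}$ from \eqref{Di_Chapman_Enskog} and $\mathcal{D}(f_i)$ differ only in their time-derivative parts, a formal interchange of integration and differentiation together with $\beta_i=\frac{1}{2}\int_{-1}^1 f_i\,d\mu$ reduces both \eqref{D01CEH} and \eqref{D0CEH} to controlling the bracketed expression $\sum_{l=0}^{i}\frac{\partial_{i-l}\beta_l}{\partial t}-\frac{\partial\beta_i}{\partial t}$ by $\BigO(\eps^2)$ or $\BigO(\eps)$, respectively.

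I would then reuse the argument from Lemma~\ref{strong_lemma} that only the $k=0$ summand in \eqref{partial_i} contributes, obtaining the analogue of \eqref{partial_i-l_vanishes}, namely $\frac{\partial_{i-l}\beta_l}{\partial t}=\frac{\partial \beta_{i-l}}{\partial t}\cdot\frac{\partial \beta_l}{\partial \beta}$. The key new input is that $\beta_0=\beta^t=\beta-\beta^s$ gives $\frac{\partial\beta_0}{\partial\beta}=1-\frac{\partial\beta^s}{\partial\beta}$, which differs from $1$ by a small quantity. Plugging this in, for $i=0$ the bracket collapses to $-\frac{\partial\beta_0}{\partial t}\cdot\frac{\partial\beta^s}{\partial\beta}$, which is $\BigO(\eps)$ under \eqref{eps1ass} (establishing \eqref{D0CEH}) and $\BigO(\eps^2)$ under \eqref{eps2ass} (giving the $i=0$ case of \eqref{D01CEH}). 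For $i=1$, a direct expansion of $\frac{\partial_1\beta_0}{\partial t}+\frac{\partial_0\beta_1}{\partial t}-\frac{\partial\beta_1}{\partial t}$ yields $-\frac{\partial\beta_1}{\partial t}\cdot\frac{\partial\beta^s}{\partial\beta}+\frac{\partial\beta_0}{\partial t}\cdot\frac{\partial\beta_1}{\partial\beta}$, and the two bounds in \eqref{eps2ass} make each summand $\BigO(\eps^2)$, yielding the $i=1$ case of \eqref{D01CEH}.

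The main obstacle is the justification that only the $k=0$ contribution in \eqref{partial_i} survives; in Lemma~\ref{strong_lemma} this was automatic from $\beta_0=\beta$ and $\beta_i=0$ for $i\geq 1$, but here one must articulate the modelling assumption that both $\beta^t$ and the components $\beta_i$ of $\beta^s=\sum_{i\geq 1}\eps^i\beta_i$ depend on $\beta$ only through its value, not through its spatial derivatives. Everything else is routine algebra that assembles the products of the small quantities $\frac{\partial\beta^s}{\partial\beta}$ and $\frac{\partial\beta_1}{\partial\beta}$ with bounded factors to produce the advertised orders.
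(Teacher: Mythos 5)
Your proposal is correct and follows essentially the same route as the paper: reduce \eqref{D01CEH} and \eqref{D0CEH} to the time-derivative terms, use Enskog's postulate \eqref{f_dep_beta} to kill the $k>0$ summands in \eqref{partial_i}, and then exploit $\frac{\partial \beta_0}{\partial \beta}=1-\frac{\partial \beta^s}{\partial \beta}$ together with \eqref{eps2ass} or \eqref{eps1ass} to bound the residuals $-\frac{\partial\beta_0}{\partial t}\frac{\partial\beta^s}{\partial\beta}$ (for $i=0$) and $-\frac{\partial\beta_1}{\partial t}\frac{\partial\beta^s}{\partial\beta}+\frac{\partial\beta_0}{\partial t}\frac{\partial\beta_1}{\partial\beta}$ (for $i=1$). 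These are exactly the computations in the paper's proof, so nothing further is needed.
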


\begin{proof}
As in the proof of Lemma~\ref{strong_lemma}, by the form of the time derivative in $\mathcal D(f)^{(i)}$ in~\eqref{Di_Chapman_Enskog}, for~\eqref{D01CEH}, we need to prove 
$$
\frac{1}{2}\int_{-1}^1\left(\mathcal D(f)^{(i)}-
\mathcal D(f_i)\right)d\mu=
\frac{1}{2}\int_{-1}^1 \left(\sum_{l=0}^i\frac{\partial_{i-l}f_{l}}{c\partial t}
-\frac{\partial f_i}{c\partial t}\right)d\mu=\mathcal{O}(\varepsilon^2)\,,
$$
i.e., after elimination of the $\varepsilon$-independent constant $c$,
\begin{equation*}
\sum_{l=0}^i\frac{\partial_{i-l}\beta_{l}}{\partial t}
=\frac{\partial \beta_i}{\partial t}+\mathcal{O}(\varepsilon^2)\,,
\end{equation*}
for $i=0,1$, which means $\frac{\partial_0 \beta_0}{\partial t} = \frac{\partial \beta_0}{\partial t}+\BigO(\varepsilon^2)$ for $i=0$ and $\frac{\partial_1 \beta_0}{\partial t} + \frac{\partial_0 \beta_1}{\partial t}= \frac{\partial \beta_1}{\partial t}+\BigO(\varepsilon^2)$ for $i=1$.

By \eqref{partial_i} we calculate for $i=0$
\begin{equation*}
\frac{\partial_0 \beta_0}{\partial t} = \frac{\partial \beta_0}{\partial t}\frac{\partial \beta_0}{\partial \beta} = \frac{\partial \beta_0}{\partial t}\left(\frac{\partial \beta}{\partial \beta}-\frac{\partial \beta^s}{\partial \beta}\right)  =  \frac{\partial \beta_0}{\partial t}(1+\BigO(\varepsilon^2)) = \frac{\partial \beta_0}{\partial t} +\BigO(\varepsilon^2)
\end{equation*}
using the assumptions \eqref{eps2ass} on $\beta_0=\beta^t$ and $\beta^s$ as well as the fact that $\beta_0$ does not depend on the spatial derivatives of $\beta$ in Enskog's postulate~\eqref{f_dep_beta}. 
With the same arguments, also for $i=1$, we conclude
\begin{multline*}
\frac{\partial_1 \beta_0}{\partial t} + \frac{\partial_0 \beta_1}{\partial t} = \frac{\partial \beta_1}{\partial t}\frac{\partial \beta_0}{\partial \beta} + \frac{\partial \beta_0}{\partial t}\frac{\partial \beta_1}{\partial \beta} = \frac{\partial \beta_1}{\partial t}\left(\frac{\partial \beta}{\partial \beta}-\frac{\partial \beta^s}{\partial \beta}\right)+\frac{\partial \beta_0}{\partial \beta}\BigO(\varepsilon^2) \\[2mm]
= \frac{\partial \beta_1}{\partial t}(1+\BigO(\varepsilon^2))+\BigO(\varepsilon^2) = \frac{\partial \beta_1}{\partial t}+\BigO(\varepsilon^2)
\end{multline*}
as required.

The derivation of \eqref{D0CEH} by \eqref{eps1ass} is the same as the derivation of \eqref{D01CEH} by \eqref{eps2ass} for $i=0$ if we replace $\BigO(\varepsilon^2)$ above by $\BigO(\varepsilon)$.
\end{proof}

Using the two preceding lemmas, we can write the second order approximation of $s$ as
\begin{multline*}
s=\frac{1}{2}\int_{-1}^{1}\left(\sum_{i=1}^\infty \varepsilon^{(i-1)} \mathcal D(f)^{(i-1)} \right)d\mu =\frac{1}{2}\int_{-1}^{1}\left(\sum_{i=1}^2 \varepsilon^{(i-1)} \mathcal D(f)^{(i-1)} \right)d\mu  +\BigO(\varepsilon^2)\\[2mm] 
= \frac{1}{2}\int_{-1}^{1} \mathcal D(f_0 + \varepsilon f_1) d\mu +\BigO(\varepsilon^2)\,.
\end{multline*}
\if 0
To get an approximation of $s$, the idea used in [] is to get expressions for $f_0$ and $\varepsilon f_1$ that only depend on $\mathcal D(f_0)$ and its angular moment, out of the two first equations of the hierarchy and use the following approximation
\begin{equation}
s=\frac{1}{2}\int_{-1}^{1}\left(\sum_{i=1}^\infty \varepsilon^{(i-1)} \mathcal D(f)^{(i-1)} \right)d\mu =\frac{1}{2}\int_{-1}^{1}\left(\sum_{i=1}^2 \varepsilon^{(i-1)} \mathcal D(f)^{(i-1)} \right)d\mu  +\BigO(\varepsilon^2) = \frac{1}{2}\int_{-1}^{1} \mathcal D(f_0 + \varepsilon f_1) d\mu +\BigO(\varepsilon^2)
\end{equation}

keeping only the ``leading order'' terms. This approximation is valid for the Strong and the Weak form under the corresponding hypothesis. 
\fi
With this approximation in \eqref{sequ0}, the equation we want to solve becomes
\begin{equation}
\frac{1}{2}\int_{-1}^{1} \mathcal D(f_0 + \varepsilon f_1) d\mu  = \frac{\bar j}{\varepsilon}- \sum_{i=0}^\infty \left(\frac{\bar{\tilde\chi}}{2\varepsilon}\int_{-1}^1 \varepsilon^{i} f_i d\mu \right) +\BigO(\varepsilon^2)\,,
\label{sequ}
\end{equation}
which is the sum of the first three equations of the hierarchy \eqref{CEHie_int_1}, \eqref{CEHie_int_2} in angularly integrated form, multiplied by $\varepsilon^{(i-1)}$, $i=0,1,2$, respectively, up to the order $\BigO(\varepsilon^2)$.

For further treatment of the left hand side of this equation, the following decomposition of the operator $\mathcal{D}$ will be helpful.

\begin{definition}
With the notation as in Subsection~\ref{fully_coupled} we define 
the operators
\begin{equation}
\label{D+}
 \mathcal D^+(f) := \frac{d f}{c dt}  + \left[ \mu \left( \frac{d \ln \rho}{cd t} + \frac{3v}{cr} \right) \right] (1-\mu^2)\frac{\partial f}{\partial \mu} + \left[ \mu^2 \left( \frac{d \ln \rho}{cd t} + \frac{3v}{cr} \right) - \frac{v}{cr}\right] \omega \frac{\partial f}{\partial \omega}
\end{equation}
and
 \begin{equation}
 \label{D-}
 \mathcal D^-(f) := \mu \frac{\partial f}{\partial r} + \frac{1}{r}(1-\mu^2)\frac{\partial f}{\partial \mu}\,.
 \end{equation}
\end{definition}

These operators decompose the operator $\mathcal{D}=\mathcal{D}^++\mathcal{D}^-$ into its symmetric and antisymmetric part with respect to $\mu$. In addition, $\mathcal{D}^+$ contains all time-dependent coefficient functions. This will be used in the next subsection in Theorem~\ref{weak_theorem_H_time} and Theorem~\ref{Free_thm}.

The following result is the fundamental lemma of this and the next subsection.

\begin{lemma}
\label{Liebend_approx}
Let $\mathcal C(f)$ be given by \eqref{coll} with $\tilde\chi\neq 0$, $\tilde\chi+\phi_0\neq 0$ as well as $\tilde\chi+\phi_0-\phi_1\neq 0$. Then the first hierarchy equation \eqref{CEHie1} has the unique solution
\begin{equation*}
f_0=\frac{j}{\tilde\chi}\,,
\end{equation*} 
which is isotropic and leads to
\begin{equation}
\label{Df0}
\mathcal D(f)^{(0)}  = \mathcal D(f_0)
\end{equation}
in the second equation \eqref{CEHie2} for $i=1$ in the hierarchy, making this equation formally the same as in the Hilbert expansion. For the solution of this equation we obtain
\begin{equation}
\label{eps_f1}
\varepsilon f_1 =\frac{-1}{\tilde\chi +\phi_0}\left(\mathcal D(f_0) + \frac{\phi_0}{\tilde\chi}\frac{1}{2}\int_{-1}^1\mathcal D(f_0)d\mu + 3\mu\phi_1 \lambda\frac{1}{2}\int_{-1}^1\mathcal D(f_0)\mu d\mu\right)\,.
\end{equation}
Furthermore, we have
\begin{equation}
\label{intDf0}
\frac{1}{2}\int_{-1}^1 \mathcal D(f_0) d\mu=\frac{1}{2}\int_{-1}^1 \mathcal D^+(f_0) d\mu=\frac{d \beta_0}{c dt} +  \frac{1}{3} \frac{d \ln \rho}{cd t}  \omega \frac{\partial \beta_0}{\partial \omega}
\end{equation}
and
\begin{multline}
\label{intDeps_f1} 
\frac{1}{2}\int_{-1}^1 \mathcal D(\varepsilon f_1) d\mu\doteq\frac{1}{2}\int_{-1}^1 \mathcal D^-(\varepsilon f_1) d\mu
 = -\frac{1}{2}\int_{-1}^1 \mathcal D^-\left(\lambda\mathcal D^- (f_0)\right)d\mu
= -\frac{1}{r^2}\frac{\partial }{\partial r}\left( r^2\frac{\lambda}{3}\frac{\partial \beta_0}{\partial r}\right)\,,
\end{multline}
i.e.,
\begin{equation}
\label{leading_order}
\frac{1}{2}\int_{-1}^{1} \mathcal D(f_0 + \varepsilon f_1) d\mu \doteq \frac{d\beta_0}{cd t}+\frac{1}{3}\frac{d \ln\rho}{cd t}\omega\frac{\partial\beta_0}{\partial \omega} - \frac{1}{r^2}\frac{\partial}{\partial r}\left(r^2\frac{\lambda}{3}\frac{\partial\beta_0}{\partial r}\right)
\end{equation}
in which $\doteq$ denotes the identity up to $\mathcal{D}^+\big(\mathcal{D}^+(\cdot)\big)$ terms if we use the decomposition of $\mathcal{D}=\mathcal{D}^++\mathcal{D}^-$ in \eqref{eps_f1} and \eqref{intDeps_f1}.
\end{lemma}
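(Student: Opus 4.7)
The plan is to dispatch the four assertions in order, leveraging throughout the isotropy of $f_0$ and the affine-in-$\mu$ structure of the truncated collision operator \eqref{coll}.

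For existence and uniqueness of $f_0=j/\tilde\chi$, substitute \eqref{coll} into \eqref{CEHie1} to reduce it to the affine-in-$\mu$ relation $(\tilde\chi+\phi_0)f_0 = j+\phi_0\beta_0+3\mu\phi_1\alpha_0$, where $\alpha_0 := \tfrac{1}{2}\int_{-1}^1\mu f_0\,d\mu$. Taking the zeroth and first $\mu$-moments separately gives $\tilde\chi\beta_0 = j$ and $(\tilde\chi+\phi_0-\phi_1)\alpha_0 = 0$; under the nonvanishing hypotheses of the lemma these force $\beta_0=j/\tilde\chi$ and $\alpha_0=0$, whence $f_0=j/\tilde\chi$ uniquely, and isotropy is automatic. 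The identity \eqref{Df0} follows because by comparison of $\mathcal{D}(f)^{(0)}$ in \eqref{Di_Chapman_Enskog} at $i=0$ with $\mathcal{D}(f_0)$ using \eqref{Lagrange}, the only possible discrepancy sits in the time-derivative slot $\partial_0 f_0/(c\partial t)$ versus $\partial f_0/(c\partial t)$, and under Enskog's postulate applied to the isotropic $f_0=j/\tilde\chi$ the operator \eqref{partial_i} routes its implicit time dependence through $\beta_0$ alone, so the two coincide.

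To derive \eqref{eps_f1}, feed \eqref{Df0} into \eqref{CEHie2} at $i=1$ and use \eqref{coll} to rewrite the equation as $-(\tilde\chi+\phi_0)(\varepsilon f_1)+\phi_0\bar\beta+3\mu\phi_1\bar\alpha = \mathcal{D}(f_0)$, with $\bar\beta,\bar\alpha$ the zeroth and first $\mu$-moments of $\varepsilon f_1$. Taking these two moments of the equation closes the system: $\bar\beta = -\tilde\chi^{-1}\cdot\tfrac{1}{2}\!\int\mathcal{D}(f_0)\,d\mu$ and, using \eqref{meanfreepath}, $\bar\alpha = -\lambda\cdot\tfrac{1}{2}\!\int\mu\mathcal{D}(f_0)\,d\mu$. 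Substituting back gives \eqref{eps_f1}.

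The angular integrals then reduce to $\mu$-parity bookkeeping under $\mathcal{D}=\mathcal{D}^++\mathcal{D}^-$. Isotropy of $f_0$ kills all $\partial_\mu f_0$ terms, so $\mathcal{D}^-(f_0)=\mu\partial_r f_0$ while $\mathcal{D}^+(f_0)$ is $\mu$-even; the elementary moments $\tfrac{1}{2}\int 1\,d\mu=1$, $\tfrac{1}{2}\int\mu\,d\mu=0$, $\tfrac{1}{2}\int\mu^2\,d\mu=1/3$ then yield \eqref{intDf0} together with $\tfrac{1}{2}\!\int\mathcal{D}^-(f_0)\,d\mu=0$, $\tfrac{1}{2}\!\int\mu\mathcal{D}^+(f_0)\,d\mu=0$ and $\tfrac{1}{2}\!\int\mu\mathcal{D}^-(f_0)\,d\mu=\tfrac{1}{3}\partial_r f_0$. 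Inserting the $\mathcal{D}^\pm$-split of $\mathcal{D}(f_0)$ into \eqref{eps_f1} and using the algebraic identity $1+\phi_1\lambda=(\tilde\chi+\phi_0)\lambda$ collapses the antisymmetric part of $\varepsilon f_1$ cleanly to $-\lambda\mathcal{D}^-(f_0)=-\lambda\mu\partial_r f_0$, leaving a $\mu$-even remainder built from $\mathcal{D}^+(f_0)$. To conclude \eqref{intDeps_f1}, apply $\mathcal{D}=\mathcal{D}^++\mathcal{D}^-$ to this decomposition of $\varepsilon f_1$ and integrate in $\mu$: parity kills $\tfrac{1}{2}\!\int\mathcal{D}^-(\text{symmetric part})\,d\mu$ and $\tfrac{1}{2}\!\int\mathcal{D}^+(-\lambda\mathcal{D}^-(f_0))\,d\mu$, while $\tfrac{1}{2}\!\int\mathcal{D}^+(\text{symmetric part})\,d\mu$ is the $\mathcal{D}^+(\mathcal{D}^+(\cdot))$ remainder absorbed into $\doteq$; what survives is $-\tfrac{1}{2}\!\int\mathcal{D}^-(\lambda\mathcal{D}^-(f_0))\,d\mu$, and the explicit computation $\mathcal{D}^-(-\lambda\mu\partial_r f_0)=-\mu^2\partial_r(\lambda\partial_r f_0)-\tfrac{1}{r}(1-\mu^2)\lambda\partial_r f_0$ combined with the spherical-divergence identity $\tfrac{1}{r^2}\partial_r(r^2 g)=\tfrac{2g}{r}+\partial_r g$ packages this as $-\tfrac{1}{r^2}\partial_r\!\bigl(r^2\tfrac{\lambda}{3}\partial_r\beta_0\bigr)$. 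Summing with \eqref{intDf0} yields \eqref{leading_order}. The main obstacle is the bookkeeping of which symmetric/antisymmetric parts of $\varepsilon f_1$ survive each $\mu$-integration and which summands drop into the $\mathcal{D}^+(\mathcal{D}^+(\cdot))$ remainder; once the $\mu$-parity structure is laid out, the rest reduces to the elementary moment integrals above.
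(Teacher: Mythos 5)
Your proposal is correct and follows essentially the same route as the paper's proof: moment-taking of the first two hierarchy equations to pin down $f_0=j/\tilde\chi$ and $\varepsilon f_1$, the identity $1+\phi_1\lambda=(\tilde\chi+\phi_0)\lambda$ to collapse the antisymmetric part of $\varepsilon f_1$ to $-\lambda\mathcal D^-(f_0)$, and $\mu$-parity arguments to isolate the surviving $-\tfrac{1}{2}\int\mathcal D^-(\lambda\mathcal D^-(f_0))\,d\mu$ term and relegate the rest to the $\mathcal D^+\big(\mathcal D^+(\cdot)\big)$ remainder. The only cosmetic difference is that you split $\varepsilon f_1$ into its even and odd parts before applying $\mathcal D^\pm$, whereas the paper inserts the full expression into $\mathcal D^+$ and $\mathcal D^-$ separately and does the parity bookkeeping term by term.
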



\begin{remark}
The expressions for $f_0$ and $\varepsilon f_1$ are obtained by taking the zeroth as well as the first angular moment of the first two equations of the hierarchy \eqref{CEHie1} and \eqref{CEHie2}, respectively, and inserting the obtained expressions for $\frac{1}{2}\int_{-1}^1f_id\mu$ and $\frac{1}{2}\int_{-1}^1f_i\mu d\mu$ back into \eqref{CEHie1} for $i=0$ and \eqref{CEHie2} for $i=1$, respectively. Here one uses that the particular form of the truncated Legendre expansion of the collision integral $\mathcal C(f)$ given in equation \eqref{coll} is given in terms of $f$ and the zeroth and first angular moment of $f$. The calculations are similar as performed in~\cite[App.\;A]{LiebendoerferEtAl09big} and in~\cite[Sec.~2.4]{BeFrGaLiMiVa12_esaimbig}.\\
\indent The arguments of how to obtain \eqref{leading_order} are also sketched in \cite[App.\;A]{LiebendoerferEtAl09big} and are given in more detail in~\cite[Sec.~2.4]{BeFrGaLiMiVa12_esaimbig}. However, for proper readability of this paper, we need to give a comprehensive presentation of the proof of this fundamental lemma here, too.\\
In this Lemma, and in the Theorems on the diffusion and the reaction limits to come, $f_0=j/\tilde\chi$ turns out to be isotropic, so that we obtain the identity
$$
f_0=\frac{1}{2}\int_{-1}^1 f_0 d\mu=\beta_0
$$
in which, as in \eqref{f_t_is_beta_t}, the first equation is a slight abuse of notation because its left hand side is constant with respect to $\mu$ whereas, a priori by integration, the right hand side does not even contain a $\mu$-dependency. 
\end{remark}

\begin{proof}[Proof of Lemma \ref{Liebend_approx}]
Using \eqref{collision_vanishes} and $\tilde\chi\neq 0$, the zeroth moment of the first equation \eqref{CEHie1} in the hierarchy reads
\begin{equation}
\label{f0=b0}
0=j-\tilde \chi\frac{1}{2}\int_{-1}^{1}f_0d\mu\,,\quad\mbox{i.e.,}\quad\frac{1}{2}\int_{-1}^{1}f_0d\mu=\frac{j}{\tilde\chi}\,.
\end{equation}
Using \eqref{coll}, the first moment of \eqref{CEHie1} can be rewritten as
\begin{eqnarray*}
0&=&\frac{1}{2}\int_{-1}^1 \Big(j-\tilde \chi f_0 + \mathcal C(f_0)\Big)\mu d\mu\\[2mm]
\Leftrightarrow\quad 0&=& -\left(\tilde\chi-\phi_0+\frac{3\phi_1}{2}\int_{-1}^1\mu^2d\mu\right)\frac{1}{2}\int_{-1}^1 f_0 \mu d\mu \\[2mm]
\Leftrightarrow\quad 0&=&\frac{1}{2}\int_{-1}^1 f_0 \mu d\mu
\end{eqnarray*}
if $\tilde\chi+\phi_0-\phi_1\neq 0$. Together with \eqref{f0=b0} and \eqref{coll} in \eqref{CEHie1}, this result leads to
\begin{equation*}
0=j-\tilde\chi f_0 -\phi_0 f_0+\phi_0\frac{1}{2}\int_{-1}^{1}f_0d\mu+3\mu\phi_1\frac{1}{2}\int_{-1}^1 f_0 \mu d\mu=j-(\tilde\chi+\phi_0) f_0+\phi_0\frac{j}{\tilde\chi} \,,
\end{equation*}
i.e.,  $f_0=j/\tilde\chi$ if $\tilde\chi\neq 0$ and $\tilde\chi+\phi_0\neq 0$.

In order to derive \eqref{eps_f1} we first prove \eqref{Df0}
for which, by \eqref{Di_Chapman_Enskog}, we have to show
\begin{equation}
\label{Df00}
\frac{\partial_0 f_0}{\partial t}=\frac{\partial f_0}{\partial t}\,.
\end{equation}
This property follows from isotropy of $f_0=j/\tilde\chi$ which holds since both $j$ and $\tilde\chi$ do not depend on~$\mu$. Concretely, since $f_0$ is isotropic, we immediately have $f_0=\beta_0$, and since $\beta$ and, therefore,~$\beta_0$, too, do not depend on the spatial derivatives of $\beta$ by postulate~\eqref{f_dep_beta}, we obtain \eqref{Df00} from \eqref{partial_i}.

Using \eqref{Df0} and \eqref{collision_vanishes}, the zeroth moment of the second equation \eqref{CEHie2} for $i=1$ in the hierarchy reads
\begin{equation}
\label{zeroth_f1}
\frac{1}{2}\int_{-1}^1 \mathcal D(f_0) d\mu=-\tilde\chi\frac{1}{2}\int_{-1}^1 \eps f_1 d\mu\,.
\end{equation}
Again, by \eqref{Df0} and the special form of the collision integral \eqref{coll}, the first moment of \eqref{CEHie2} for $i=1$ can be written as
\begin{eqnarray}
\frac{1}{2}\int_{-1}^1 \mathcal D(f_0)\mu d\mu &=& -(\tilde\chi+\phi_0)\frac{1}{2}\int_{-1}^1 \eps f_1 \mu d\mu+\phi_1 \frac{3}{2}\int_{-1}^1\mu^2 d\mu\; \frac{1}{2}\int_{-1}^1 \eps f_1\mu d\mu\nonumber\\[2mm]
\Leftrightarrow\quad\frac{1}{2}\int_{-1}^1 \mathcal D(f_0)\mu d\mu &=& -(\tilde\chi+\phi_0-\phi_1)\frac{1}{2}\int_{-1}^1 \eps f_1 \mu d\mu\,.\label{first_f1}
\end{eqnarray}
Now, inserting the expressions \eqref{zeroth_f1} and \eqref{first_f1} for the zeroth and first moment of $f_1$ in terms of the zeroth and first moment of $\mathcal{D}(f_0)$ into \eqref{CEHie2} for $i=1$ with the collision integral given by \eqref{coll}, we obtain
\begin{equation}
\label{Df0-f1}
\mathcal{D}(f_0)=-(\tilde\chi+\phi_0)\eps f_1-\frac{\phi_0}{\tilde\chi}\frac{1}{2}\int_{-1}^1 \mathcal D(f_0) d\mu-\frac{3\mu\phi_1}{\tilde\chi+\phi_0-\phi_1}\frac{1}{2}\int_{-1}^1 \mathcal D(f_0)\mu d\mu\,.
\end{equation}
By the definition \eqref{meanfreepath} of the mean free path $\lambda$, this equation is equivalent to \eqref{eps_f1}.



Now we turn to the proof of \eqref{intDf0} and \eqref{intDeps_f1}. Using the decomposition $\mathcal D=\mathcal D^++\mathcal D^-$, the first equality in \eqref{intDf0} follows immediately from $\frac{1}{2}\int_{-1}^1 \mathcal D^-(f_0) d\mu = 0$ since $\mathcal D^-$ is antisymmetric in $\mu$ and $f_0$ does not depend on $\mu$. In order to prove the second equality in \eqref{intDf0} we calculate
\begin{align*}
&\frac{1}{2}\int_{-1}^1 \mathcal D^+(f_0) d\mu =\nonumber\\[3mm]
&  \frac{1}{2}\int_{-1}^1\left[ \frac{d f_0}{c dt}  + \left[ \mu \left( \frac{d \ln \rho}{cd t} + \frac{3v}{cr} \right) \right] (1-\mu^2)\frac{\partial f_0}{\partial \mu} + \left[ \mu^2 \left( \frac{d \ln \rho}{cd t} + \frac{3v}{cr} \right) - \frac{v}{cr}\right] \omega \frac{\partial f_0}{\partial \omega}\right] d\mu\nonumber\\[3mm]
&= \frac{d \beta_0}{c dt} + \left[ \frac{1}{3} \left( \frac{d \ln \rho}{cd t} + \frac{3v}{cr} \right) -\frac{v}{cr} \right]\omega \frac{\partial \beta_0}{\partial \omega}\nonumber\\[3mm]
&= \frac{d \beta_0}{c dt} +  \frac{1}{3} \frac{d \ln \rho}{cd t}  \omega \frac{\partial \beta_0}{\partial \omega}\,.
\end{align*}
Here, the second equality is obtained by formally interchanging integration and differentiation, considering $\beta_0=\frac{1}{2}\int_{-1}^1f_0 d\mu$ in the first and third summand in the integral, using $\frac{1}{2}\int_{-1}^1 \mu^2d\mu=\frac{1}{3}$ and the isotropy of $f_0$ for the latter and observing that the second summand vanishes completely since $f_0$ does not depend on $\mu$.

For the approximation ``$\doteq$'' in \eqref{intDeps_f1} we also use the decomposition $\mathcal D=\mathcal D^++\mathcal D^-$ and first consider the $\mathcal D^+$ part which, by inserting \eqref{eps_f1}, reads
\begin{multline*}
\frac{1}{2}\int_{-1}^1 \mathcal D^+(\varepsilon f_1) d\mu
=
\frac{1}{2}\int_{-1}^1 \mathcal D^+\Bigg(\frac{-1}{\tilde \chi+\phi_0}\Bigg[ \mathcal D^+(f_{0})+\mathcal D^-(f_0)
+\frac{\phi_0}{\tilde \chi}\frac{1}{2}\int_{-1}^{1}\big(\mathcal D^+(f_0)+\mathcal D^-(f_0)\big)d\mu\\[1mm]
+3\mu\phi_1\lambda\frac{1}{2}\int_{-1}^1 \big(\mathcal D^+(f_0)+\mathcal D^-(f_0)\big) \mu d\mu \Bigg]\Bigg) d\mu.
\end{multline*}
Since $f_0$ does not depend on $\mu$, the term $\mathcal D^+(f_0)\mu$ in the second inner integral of this expression is an antisymmetric polynomial in $\mu$ and thus vanishes after integration. By the same reasoning, the term $\mathcal D^-(f_0)\mu$ in the second inner integral is a symmetric polynomial in $\mu$ so that angular integration gives an expression that no longer depends on $\mu$. Consequently, the last summand in the brackets is linear in $\mu$ so that the application of the operator $\mathcal D^+\big(\frac{-1}{\tilde \chi+\phi_0}(\cdot)\big)$ to this expression is an antisymmetric polynomial in $\mu$ that vanishes after the outer integration. By the same reasoning, the second summand in the brackets vanishes after the application of this operator and the outer integration. With these arguments and $\frac{1}{2}\int_{-1}^1 \mathcal D^-(f_0) d\mu = 0$ as seen above we obtain the simplified equation
\begin{equation}
\label{D++terms}
\frac{1}{2}\int_{-1}^1 \mathcal D^+(\varepsilon f_1) d\mu
=
\frac{1}{2}\int_{-1}^1 \mathcal D^+\Bigg(\frac{-1}{\tilde \chi+\phi_0}\Bigg[ \mathcal D^+(f_{0})
+\frac{\phi_0}{\tilde \chi}\frac{1}{2}\int_{-1}^{1}\mathcal D^+(f_0)d\mu\Bigg]\Bigg) d\mu\, ,
\end{equation}
in which the right hand side only contains terms that undergo the application of the operator $\mathcal D^+$ twice. Consequently, this part contains all $\mathcal D^+\big(\mathcal D^+(\cdot)\big)$ terms that are neglected in the approximation ``$\doteq$'' in \eqref{intDeps_f1} and~\eqref{leading_order}.

Second, the $\mathcal D^-$ part in the approximation ``$\doteq$'' in \eqref{intDeps_f1} reads
\begin{multline*}
\frac{1}{2}\int_{-1}^1 \mathcal D^-(\varepsilon f_1) d\mu
=
\frac{1}{2}\int_{-1}^1 \mathcal D^-\Bigg(\frac{-1}{\tilde \chi+\phi_0}\Bigg[ \mathcal D^+(f_{0})+\mathcal D^-(f_0)
+\frac{\phi_0}{\tilde \chi}\frac{1}{2}\int_{-1}^{1}\big(\mathcal D^+(f_0)+\mathcal D^-(f_0)\big)d\mu\\[1mm]
+3\mu\phi_1\lambda\frac{1}{2}\int_{-1}^1 \big(\mathcal D^+(f_0)+\mathcal D^-(f_0)\big) \mu d\mu \Bigg]\Bigg) d\mu.
\end{multline*}
As for the $\mathcal D^+$ part, the term $\mathcal D^+(f_0)\mu$ vanishes after integration in the second inner integral. Since $f_0$ is independent of $\mu$, the term $\mathcal D^+(f_0)$ in the first inner integral is a symmetric polynomial in $\mu$. Therefore, since $\frac{1}{2}\int_{-1}^1 \mathcal D^-(f_0) d\mu = 0\,$, the first inner integral does no longer depend on $\mu$ so that the application of the operator $\mathcal D^-\big(\frac{-1}{\tilde \chi+\phi_0}(\cdot)\big)$ to it is linear in $\mu$ and vanishes after the outer integration. The same holds for the first term $\mathcal D^+(f_0)$ in the brackets which is a symmetric polynomial in $\mu$ so that the application of $\mathcal D^-\big(\frac{-1}{\tilde \chi+\phi_0}(\cdot)\big)$ to it gives an antisymmetric polynomial in $\mu$ that vanishes by the outer integration. Consequently, we obtain the equation
\begin{equation*}
\frac{1}{2}\int_{-1}^1 \mathcal D^-(\varepsilon f_1) d\mu=
\frac{1}{2}\int_{-1}^1 \mathcal D^-\Bigg(\frac{-1}{\tilde \chi+\phi_0}\Bigg[ \mathcal D^-(f_{0})
+3\mu\phi_1\lambda\frac{1}{2}\int_{-1}^1 \mathcal D^-(f_0) \mu d\mu\Bigg]\Bigg) d\mu\, ,
\end{equation*}
that we can further simplify by observing 
\begin{equation}
\label{D-f0}
\mathcal D^-(f_0)=\mu\frac{\partial f_0}{\partial r}\,,
\end{equation}
which leads to
$$ 
3\mu\phi_1\lambda\frac{1}{2}\int_{-1}^1 \mathcal D^-(f_0) \mu d\mu=3\mu\phi_1\lambda\frac{1}{2}\int_{-1}^1 \mu^2\frac{\partial f_0}{\partial r} d\mu=\phi_1\lambda\mu\frac{\partial f_0}{\partial r}=\phi_1\lambda\mathcal D^-(f_0)\,,
$$
so that, with \eqref{meanfreepath}, we can conclude
\begin{equation*}
\frac{1}{2}\int_{-1}^1 \mathcal D^-(\varepsilon f_1) d\mu=
\frac{1}{2}\int_{-1}^1 \mathcal D^-\Bigg(\frac{-1}{\tilde \chi+\phi_0}\Big[ \big( 1
 +\phi_1\lambda \big)\mathcal D^-(f_0)\Big]\Bigg) d\mu
 = \frac{1}{2}\int_{-1}^1 \mathcal D^-\left(-\lambda\mathcal D^- (f_0)\right)d\mu\,.
\end{equation*}

In order to prove the last equality in \eqref{intDeps_f1} we take \eqref{D-f0} into account and calculate
\begin{align*}
\frac{1}{2}\int_{-1}^1 \mathcal D^-\left(\lambda\mathcal D^- (f_0)\right)d\mu
&= \frac{1}{2}\int_{-1}^1 \mathcal D^-\left(\lambda\mu\frac{\partial f_0}{\partial r}\right)d\mu\nonumber\\[3mm]
&= \frac{1}{2}\int_{-1}^1\left[ \mu \frac{\partial}{\partial r}  \left(\lambda\mu\frac{\partial f_0}{\partial r}\right)  + \frac{1}{r}(1-\mu^2)\frac{\partial }{\partial \mu} \left(\lambda\mu\frac{\partial f_0}{\partial r}\right) \right]     d\mu\nonumber\\[3mm]
&= \frac{1}{3} \frac{\partial}{\partial r}\left(  \lambda\frac{\partial \beta_0}{\partial r}\right) +  \frac{1}{2}\int_{-1}^1  \frac{1}{r}(1-\mu^2) \lambda\frac{\partial f_0}{\partial r} d\mu\nonumber\\[3mm]
&= \frac{1}{3} \frac{\partial}{\partial r}\left(  \lambda\frac{\partial \beta_0}{\partial r}\right) + \frac{2}{3}\frac{1}{r}\lambda \frac{\partial \beta_0}{\partial r}\nonumber\\[3mm]
&= \frac{1}{r^2}\frac{\partial }{\partial r}\left( r^2\frac{\lambda}{3}\frac{\partial \beta_0}{\partial r}\right).
\end{align*}
For the third and fourth equality, we used again $\frac{1}{2}\int_{-1}^1 \mu^2d\mu=\frac{1}{3}$, the isotropy of $f_0$ and $\lambda$ as well as $\beta_0=\frac{1}{2}\int_{-1}^1f_0 d\mu$. The last equality follows from the product rule. 

To finish the proof we note that \eqref{leading_order} follows immediately from the sum of \eqref{intDf0} and \eqref{intDeps_f1}.
\end{proof}

\begin{remark}
\label{D+D+} 
The approximation \eqref{leading_order} of $s$ in the sense of Lemma \ref{Liebend_approx} is called ``leading order approximation'' in \cite{LiebendoerferEtAl09big} since the ``leading $\frac{1}{2}\int_{-1}^1\mathcal{D}^+(f_0)d\mu$ term'' is included on the right hand side. Note that if we introduce an additional scaling for the time variable as in Theorem~\ref{weak_theorem_H_time}, then the $\mathcal D^+\big(\mathcal D^+(\cdot)\big)$ terms \eqref{D++terms} are naturally of order $\varepsilon^2$ so that \eqref{leading_order} is replaced by the sum of \eqref{intDf0} and \eqref{intDeps_f1} in which the first ``$\doteq$''-equation is skipped. This sum is then an approximation of $s$ up to the order~$\BigO(\varepsilon^2)$.
\end{remark}

We can now collect the results and formulate our main approximation theorems.

\begin{theorem} 
\label{strong_theorem}
Let $\mathcal C(f)$ be given by \eqref{coll} with $\tilde\chi\neq 0$, $\tilde\chi+\phi_0\neq 0$ as well as $\tilde\chi+\phi_0-\phi_1\neq 0$. We assume Enskog's postulate~\eqref{f_dep_beta}. Then, with the strong Chapman--Enskog hypothesis in Lemma \ref{strong_lemma}, 
the diffusion-reaction-type equation
\begin{equation*}
\frac{d\beta}{cd t}+\frac{1}{3}\frac{d \ln\rho}{cd t}\omega\frac{\partial\beta}{\partial \omega} - \frac{1}{r^2}\frac{\partial}{\partial r}\left(r^2\frac{\lambda}{3}\frac{\partial\beta}{\partial r}\right) =  j-\tilde\chi \beta
\end{equation*}
is an approximation of the angular mean of Boltzmann's equation \eqref{Boltzmann_eps} of order $\BigO(\varepsilon^2)$ up to $\mathcal{D}^+\big(\mathcal{D}^+(\cdot)\big)$ terms in the sense of Lemma~\ref{Liebend_approx}.
\end{theorem}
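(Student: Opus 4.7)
The plan is to assemble the result from the two preceding lemmas and the master expansion \eqref{sequ}. First I would rewrite \eqref{sequ} in the form
\begin{equation*}
\frac{1}{2}\int_{-1}^{1} \mathcal D(f_0 + \varepsilon f_1) d\mu  = j - \tilde\chi\,\frac{1}{2}\int_{-1}^{1} f \,d\mu + \BigO(\varepsilon^2)\,,
\end{equation*}
using $\bar j = \varepsilon j$, $\bar{\tilde\chi}=\varepsilon\tilde\chi$ on the right hand side together with $\beta = \frac{1}{2}\int_{-1}^{1} f\,d\mu$. This is just the angular integration of the scaled Boltzmann equation \eqref{Boltzmann_eps} truncated at order $\varepsilon^2$, which is legitimate because Lemma~\ref{strong_lemma} guarantees, under the strong Chapman--Enskog hypothesis, that the angularly integrated Chapman--Enskog terms coincide with the Hilbert ones, i.e.\ $\frac{1}{2}\int_{-1}^{1}\mathcal D(f)^{(i)}d\mu=\frac{1}{2}\int_{-1}^{1}\mathcal D(f_i)d\mu$.

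Next I would invoke Lemma~\ref{Liebend_approx}. Its hypotheses ($\tilde\chi\neq 0$, $\tilde\chi+\phi_0\neq 0$, $\tilde\chi+\phi_0-\phi_1\neq 0$) are exactly the ones we assume, and it provides both the explicit form $f_0 = j/\tilde\chi$ (in particular, $f_0$ is isotropic so that $f_0 = \beta_0$) and, up to $\mathcal D^+\bigl(\mathcal D^+(\cdot)\bigr)$ terms, the identity~\eqref{leading_order}:
\begin{equation*}
\frac{1}{2}\int_{-1}^{1} \mathcal D(f_0 + \varepsilon f_1) d\mu \doteq \frac{d\beta_0}{cd t}+\frac{1}{3}\frac{d \ln\rho}{cd t}\omega\frac{\partial\beta_0}{\partial \omega} - \frac{1}{r^2}\frac{\partial}{\partial r}\left(r^2\frac{\lambda}{3}\frac{\partial\beta_0}{\partial r}\right)\,.
\end{equation*}
Substituting this into the truncated equation above yields the desired diffusion-reaction structure for $\beta_0$.

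Finally I would use the strong Chapman--Enskog hypothesis \eqref{beta_0=beta}--\eqref{beta_i=0}: $\beta_0 = \beta$ and $\beta_i=0$ for $i\geq 1$, which by the expansion~\eqref{beta_expanded} means the macroscopic density $\beta$ is entirely carried by $f_0$. This lets me replace $\beta_0$ by $\beta$ both on the left hand side (in every term of the diffusion-reaction operator, including the $\lambda/3$ diffusion flux) and on the right hand side of the equation, turning $j-\tilde\chi\,\frac{1}{2}\int f d\mu$ into $j-\tilde\chi\beta$. Collecting the remainders, the only neglected contributions are the $\mathcal D^+\bigl(\mathcal D^+(\cdot)\bigr)$ terms from Lemma~\ref{Liebend_approx} and the $\BigO(\varepsilon^2)$ tail from the truncation of the hierarchy, yielding precisely the claimed approximation.

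I do not expect any serious obstacle: the two lemmas have done all the work, and the argument is essentially a substitution plus bookkeeping of the two kinds of remainder. The one thing that requires care is not to conflate the $\mathcal D^+\bigl(\mathcal D^+(\cdot)\bigr)$ remainder (which, without the additional time scaling mentioned in Remark~\ref{D+D+}, is not automatically of order $\varepsilon^2$) with the $\BigO(\varepsilon^2)$ truncation remainder of the hierarchy; this is exactly why the theorem is stated ``of order $\BigO(\varepsilon^2)$ up to $\mathcal D^+\bigl(\mathcal D^+(\cdot)\bigr)$ terms'' rather than simply as $\BigO(\varepsilon^2)$.
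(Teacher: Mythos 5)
Your proposal is correct and follows essentially the same route as the paper's own (joint) proof of Theorems~\ref{strong_theorem} and~\ref{weak_theorem}: establish \eqref{sequ} via Lemma~\ref{strong_lemma}, insert the approximation \eqref{leading_order} from Lemma~\ref{Liebend_approx}, and conclude with $\beta_0=\beta$ from the strong Chapman--Enskog hypothesis. Your closing remark distinguishing the $\mathcal D^+\big(\mathcal D^+(\cdot)\big)$ remainder from the $\BigO(\varepsilon^2)$ truncation remainder matches the paper's Remark~\ref{D+D+} exactly.
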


\begin{theorem}[$\Sigma$ in the diffusion limit I] 
\label{weak_theorem}
Let $\mathcal C(f)$ be given by \eqref{coll} with $\tilde\chi\neq 0$, $\tilde\chi+\phi_0\neq 0$ and $\tilde\chi+\phi_0-\phi_1\neq 0$. We consider the decomposition
$f = f^t + f^s$ inducing $\beta = \beta^t +\beta^s$ with the angular means $\beta^t=\frac{1}{2}\int_{-1}^1 f^t d\mu$ and $\beta^s=\frac{1}{2}\int_{-1}^1 f^s d\mu$, and we assume Enskog's postulate~\eqref{f_dep_beta}. Then, with the weakened Chapman--Enskog hypothesis and with the asymptotic properties \eqref{eps2ass} of $\beta^s$ and $\beta_1$ in Lemma \ref{weak_lemma}, 
the diffusion-reaction-type equation
\begin{equation}
\label{diff_react_weak}
\frac{d\beta^t}{cd t}+\frac{1}{3}\frac{d \ln\rho}{cd t}\omega\frac{\partial\beta^t}{\partial \omega} - \frac{1}{r^2}\frac{\partial}{\partial r}\left(r^2\frac{\lambda}{3}\frac{\partial\beta^t}{\partial r}\right) =  j-\tilde\chi (\beta^t+\beta^s)
\end{equation}
is an approximation of the angular mean of Boltzmann's equation \eqref{Boltzmann_eps} of order $\BigO(\varepsilon^2)$ up to $\mathcal{D}^+\big(\mathcal{D}^+(\cdot)\big)$ terms in the sense of Lemma~\ref{Liebend_approx}. In the same sense 
\begin{equation}
\label{diff_source}
\Sigma_{\rm{diff}} := - \frac{1}{r^2}\frac{\partial}{\partial r}\left(r^2\frac{\lambda}{3}\frac{\partial\beta^t}{\partial r}\right) +\tilde\chi \beta^s
\end{equation}
is an approximation of the (angular mean of the) diffusion source $\Sigma$ in~\eqref{eq:trapped}. 
\end{theorem}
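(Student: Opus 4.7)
The plan is to assemble Lemma \ref{weak_lemma} and Lemma \ref{Liebend_approx} into a direct evaluation of the angularly integrated, second-order-truncated Chapman--Enskog expansion \eqref{sequ}. Since the right-hand side of the scaled Boltzmann equation collapses cleanly under the weak hypothesis, the bulk of the work lies in identifying the left-hand side with \eqref{diff_react_weak}. The theorem then follows by comparison with the trapped particle equation \eqref{eq:trapped}.

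First, I would treat the left-hand side. Starting from \eqref{sequ}, Lemma \ref{weak_lemma} with the asymptotic conditions \eqref{eps2ass} gives
$$\frac{1}{2}\int_{-1}^{1}\mathcal D(f)^{(i)}d\mu = \frac{1}{2}\int_{-1}^{1}\mathcal D(f_i)d\mu + \BigO(\eps^2)\quad\text{for }i\in\{0,1\},$$
so the truncated left-hand side equals $\frac{1}{2}\int_{-1}^{1}\mathcal D(f_0+\eps f_1)d\mu + \BigO(\eps^2)$. Applying Lemma \ref{Liebend_approx}, in particular formula \eqref{leading_order}, this expression equals
$$\frac{d\beta_0}{cd t}+\frac{1}{3}\frac{d \ln\rho}{cd t}\omega\frac{\partial\beta_0}{\partial \omega} - \frac{1}{r^2}\frac{\partial}{\partial r}\left(r^2\frac{\lambda}{3}\frac{\partial\beta_0}{\partial r}\right)$$
up to $\mathcal D^+\bigl(\mathcal D^+(\cdot)\bigr)$ terms. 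The first identity of the weak Chapman--Enskog hypothesis replaces $\beta_0$ by $\beta^t$, producing exactly the left-hand side of \eqref{diff_react_weak}.

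Second, I would collapse the right-hand side of \eqref{sequ0}. After cancelling the $1/\eps$ factors against the barred quantities, it reads $j - \tilde\chi\sum_{i=0}^{\infty}\eps^i\beta_i$. Splitting the sum according to the two parts of the weak hypothesis yields $j - \tilde\chi(\beta^t+\beta^s)$, which matches the right-hand side of \eqref{diff_react_weak}. For the statement about $\Sigma_{\rm{diff}}$, I would then take the angular mean of the trapped particle equation \eqref{eq:trapped}, invoke the isotropy convention $f^t=\beta^t$ from \eqref{f_t_is_beta_t}, and subtract the result from \eqref{diff_react_weak}; the difference isolates exactly the expression \eqref{diff_source}.

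The main obstacle is not the algebra, which is routine once both lemmas are in place, but conceptual: one must justify that the weak Chapman--Enskog hypothesis is the appropriate framework for the IDSA decomposition $f=f^t+f^s$ (identifying $\beta_0$ with $\beta^t$ and the $\eps$-tail of the expansion with $\beta^s$) and that the asymptotic bounds \eqref{eps2ass} are physically meaningful for this split. A second subtle point, highlighted in Remark \ref{D+D+}, is that the $\mathcal D^+\bigl(\mathcal D^+(\cdot)\bigr)$ terms neglected via \eqref{leading_order} are not automatically of order $\BigO(\eps^2)$ in the pure reaction/collision scaling --- this sharpness only appears once an additional time scaling is introduced --- so the order of approximation must be stated exactly as ``$\BigO(\eps^2)$ up to $\mathcal D^+\bigl(\mathcal D^+(\cdot)\bigr)$ terms'', matching the formulation in Lemma \ref{Liebend_approx}.
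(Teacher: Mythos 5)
Your proposal is correct and follows essentially the same route as the paper's own (combined) proof of Theorems \ref{strong_theorem} and \ref{weak_theorem}: establish \eqref{sequ} via relation \eqref{D01CEH} of Lemma \ref{weak_lemma} under \eqref{eps2ass}, insert the approximation \eqref{leading_order} from Lemma \ref{Liebend_approx}, identify $\beta_0=\beta^t$ and $\sum_{i\geq 1}\eps^i\beta_i=\beta^s$ on the two sides, and read off $\Sigma_{\rm{diff}}$ by comparison with \eqref{eq:trapped} using \eqref{f_t_is_beta_t}. Your closing caveat about the $\mathcal D^+\big(\mathcal D^+(\cdot)\big)$ terms matches the paper's Remark \ref{D+D+} exactly.
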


\begin{proof}[Proof of Theorems \ref{strong_theorem} and \ref{weak_theorem}]
First we recall that the asymptotic relation \eqref{sequ} is a consequence of both Lemma~\ref{strong_lemma} and relation~\eqref{D01CEH} in Lemma~\ref{weak_lemma} which follows from \eqref{eps2ass}. We insert the approximation \eqref{leading_order} into \eqref{sequ}. Then the equation in Theorem~\ref{strong_theorem} follows from $\beta_0 = \beta$. For Theorem~\ref{weak_theorem} we derive equation \eqref{diff_react_weak} from  $\beta_0 = \beta^t$ and $\beta=\beta^t+\beta^s$. The second statement of Theorem~\ref{weak_theorem} follows from the comparison of \eqref{diff_react_weak} with \eqref{eq:trapped} taking into account \eqref{f_t_is_beta_t}.
\end{proof}

In summary, Theorem \ref{weak_theorem} states that the diffusion source~\eqref{Sigma_diff} in the IDSA \cite[App.\;A]{LiebendoerferEtAl09big} follows from a Chapman--Enskog expansion of second order with the weakened Chapman--Enskog hypothesis and the asymptotic properties \eqref{eps2ass} of $\beta^s$ and $\beta_1$ in Lemma~\ref{weak_lemma} up to $\mathcal{D}^+\big(\mathcal{D}^+(\cdot)\big)$ terms in the sense of Lemma~\ref{Liebend_approx}.

\begin{remark}
\label{ftnotneciso}
There is a slight but important subtlety to the conditions imposed on the decomposition $f=f^t+f^s$ in Theorem~\ref{weak_theorem} and in Lemma~\ref{weak_lemma}. In contrast to the same decomposition in Section~\ref{IDSA}, we do not assume $f^t$ to be isotropic. Instead, we only impose conditions on the angular means $\beta^t$ and~$\beta^s$, which reflects the fact that we only make assertions on the angularly integrated Boltzmann equation anyway. In particular, we set $\beta^t=\beta_0$, and we know $\beta_0=j/\tilde\chi$ from Lemma~\ref{Liebend_approx}. Consequently, if $f^t$ is isotropic, it is necessarily identical to the equilibrium function $f_0=j/\tilde\chi$. This is mathematically not forbidden but physically unrealistic and only satisfied in special cases, e.g., in the center of a supernova after long times. Therefore, the assumption in the IDSA that $f^t$ should be isotropic, i.e,~\eqref{f_t_is_beta_t}, seems inappropriate. As Theorem~\ref{weak_theorem} shows, we do not need it in order to establish a diffusion-reaction-type equation as an approximation of the angular mean of Boltzmann's equation in the diffusion limit and in order to define the same diffusion source as in the~IDSA.

On the other hand, the trapped particle equation~\eqref{eq:trapped} cannot be derived from~\eqref{decomposedtrapped} if isotropy of $f^t$ is not assumed since then additional terms occur on the left hand side coming from the non-isotropic part of~$f^t$. Indeed, in Theorem~\ref{weak_theorem}, it is the diffusion term in~\eqref{diff_source} that actually accounts for the non-isotropic part of $f\sim f_0+\eps f_1$, which is $\eps f_1$ in leading order. If $\eps f_1$ is isotropic, we have $\frac{1}{2}\mathcal{D}^-(\eps f_1)d\mu=0$ since $\mathcal{D}^-$ is antisymmetric in $\mu$ which, by~\eqref{intDeps_f1}, entails a vanishing diffusion~term.
\end{remark}

We close this section by stating the corresponding results for the approximation of the total interaction rate $s$ up to first order only, i.e., if we consider the first two hierarchy equations \eqref{CEHie1} and \eqref{CEHie2} for $i=1$ instead of the first three equations as done above for Theorems \ref{strong_theorem} and~\ref{weak_theorem}. It turns out that in this case we obtain a reaction-type equation instead of a diffusion-reaction-type equation.

\begin{theorem} 
\label{strong_theorem_react}
Let $\mathcal C(f)$ be given by \eqref{coll} with $\tilde\chi\neq 0$, $\tilde\chi+\phi_0\neq 0$ and $\tilde\chi+\phi_0-\phi_1\neq 0$. We assume Enskog's postulate~\eqref{f_dep_beta}. Then, with the strong Chapman--Enskog hypothesis in Lemma \ref{strong_lemma}, 
the reaction-type equation
\begin{equation*}
\frac{d\beta}{cd t}+\frac{1}{3}\frac{d \ln\rho}{cd t}\omega\frac{\partial\beta}{\partial \omega} =  j-\tilde\chi \beta
\end{equation*}
is an approximation of the angular mean of Boltzmann's equation \eqref{Boltzmann_eps} of order $\BigO(\varepsilon)$.
\end{theorem}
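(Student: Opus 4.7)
The plan is to mirror the proof of Theorem~\ref{strong_theorem} but truncate the asymptotic hierarchy one order earlier, which avoids any need to compute $\varepsilon f_1$. Concretely, I would start from the identity for the total interaction rate \eqref{sequ0},
$$
s=\frac{1}{2}\int_{-1}^{1}\sum_{i=1}^\infty \varepsilon^{(i-1)}\mathcal D(f)^{(i-1)}\,d\mu
=\frac{\bar j}{\varepsilon}-\frac{\bar{\tilde\chi}}{2\varepsilon}\int_{-1}^{1}f\,d\mu,
$$
and keep only the $i=1$ summand on the left, so that
$$
s=\frac{1}{2}\int_{-1}^{1}\mathcal D(f)^{(0)}\,d\mu+\BigO(\varepsilon).
$$
On the right hand side, the unscaling $\bar j/\varepsilon=j$ and $\bar{\tilde\chi}/\varepsilon=\tilde\chi$ immediately gives $s=j-\tilde\chi\beta$, which is the reactive source we are aiming for.

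Next, under the strong Chapman--Enskog hypothesis \eqref{beta_0=beta}, \eqref{beta_i=0}, Lemma~\ref{strong_lemma} for $i=0$ yields
$$
\frac{1}{2}\int_{-1}^{1}\mathcal D(f)^{(0)}\,d\mu=\frac{1}{2}\int_{-1}^{1}\mathcal D(f_0)\,d\mu,
$$
so the Chapman--Enskog time-derivative expansion reduces to the usual linear action of $\mathcal D$ on $f_0$. From Lemma~\ref{Liebend_approx} we already know that $f_0=j/\tilde\chi$ is isotropic and, by \eqref{intDf0}, that
$$
\frac{1}{2}\int_{-1}^{1}\mathcal D(f_0)\,d\mu=\frac{d\beta_0}{c\,dt}+\frac{1}{3}\frac{d\ln\rho}{c\,dt}\,\omega\frac{\partial\beta_0}{\partial\omega}.
$$
The strong hypothesis \eqref{beta_0=beta} then gives $\beta_0=\beta$, so the left hand side becomes the differential operator in the statement.

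Combining the two sides produces
$$
\frac{d\beta}{c\,dt}+\frac{1}{3}\frac{d\ln\rho}{c\,dt}\,\omega\frac{\partial\beta}{\partial\omega}=j-\tilde\chi\beta+\BigO(\varepsilon),
$$
which is exactly the claimed first-order reaction-type approximation of the angularly integrated Boltzmann equation~\eqref{Boltzmann_eps}. There is no real obstacle: the bookkeeping of the $\BigO(\varepsilon)$ remainder from the truncation and the application of the two lemmas do all the work. In particular, because we never invoke $\varepsilon f_1$, none of the $\mathcal D^+\!\big(\mathcal D^+(\cdot)\big)$ terms that appeared in \eqref{D++terms} are generated here, so unlike Theorem~\ref{strong_theorem} there is no ``up to $\mathcal D^+\!\big(\mathcal D^+(\cdot)\big)$'' qualifier in the statement, and the approximation is genuinely sharp at order~$\BigO(\varepsilon)$.
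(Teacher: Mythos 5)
Your proposal is correct and follows essentially the same route as the paper: truncate the hierarchy at the $\varepsilon^0$ term, use Lemma~\ref{strong_lemma} to replace $\mathcal D(f)^{(0)}$ by $\mathcal D(f_0)$, invoke $f_0=j/\tilde\chi$ and \eqref{intDf0} from Lemma~\ref{Liebend_approx}, and conclude via $\beta_0=\beta$. Your closing observation that no $\mathcal D^+\big(\mathcal D^+(\cdot)\big)$ terms arise because $\varepsilon f_1$ is never needed correctly explains why this theorem, unlike Theorem~\ref{strong_theorem}, carries no such qualifier.
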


\begin{theorem}[$\Sigma$ in the reaction limit I] 
\label{weak_theorem_react}
Let $\mathcal C(f)$ be given by \eqref{coll} with $\tilde\chi\neq 0$, $\tilde\chi+\phi_0\neq 0$ and $\tilde\chi+\phi_0-\phi_1\neq 0$. We consider the decomposition
$f = f^t + f^s$ inducing $\beta = \beta^t +\beta^s$ with the angular means $\beta^t=\frac{1}{2}\int_{-1}^1 f^t d\mu$ and $\beta^s=\frac{1}{2}\int_{-1}^1 f^s d\mu$, and we assume Enskog's postulate~\eqref{f_dep_beta}. Then, with the weakened Chapman--Enskog hypothesis and with the asymptotic properties \eqref{eps1ass} of $\beta^s$ and $\beta_1$ in Lemma \ref{weak_lemma}, 
the reaction-type equation
\begin{equation}
\label{react_weak}
\frac{d\beta^t}{cd t}+\frac{1}{3}\frac{d \ln\rho}{cd t}\omega\frac{\partial\beta^t}{\partial \omega} =  j-\tilde\chi (\beta^t+\beta^s)
\end{equation}
is an approximation of the angular mean of Boltzmann's equation \eqref{Boltzmann_eps} of order $\BigO(\varepsilon)$. Accordingly,
\begin{equation*}
\Sigma_{\rm{reac}} := \tilde\chi \beta^s
\end{equation*}
is an approximation of order $\BigO(\varepsilon)$ of the (angular mean of the) source term $\Sigma$ in~\eqref{eq:trapped}. If we additionally suppose $\beta_1 = \BigO(\varepsilon)$, then we have $\beta^s =\BigO(\varepsilon^2)$ and the reaction limit proposed in Subsection~\ref{Sigma_reaction},
$$
\tilde\Sigma_{\rm{reac}} := 0\,,
$$ 
is an approximation of order $\BigO(\varepsilon)$ of $\Sigma$ in~\eqref{eq:trapped}.
\end{theorem}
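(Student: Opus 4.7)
The plan is to mirror the structure of the proof of Theorem~\ref{weak_theorem}, but truncate the hierarchy one step earlier and invoke the weaker version \eqref{D0CEH} of Lemma~\ref{weak_lemma} in place of \eqref{D01CEH}. Specifically, starting from the identity \eqref{sequ0} for the total interaction rate $s$, I would keep only the zeroth-order contribution of the formal series expansion of $\frac{1}{2}\int_{-1}^{1}\mathcal{D}(f)d\mu$, writing
\begin{equation*}
s = \frac{1}{2}\int_{-1}^{1}\mathcal{D}(f)^{(0)}d\mu + \BigO(\varepsilon)\,,
\end{equation*}
and then use the single assumption $\frac{\partial \beta^s}{\partial \beta}=\BigO(\varepsilon)$ in \eqref{eps1ass} to apply \eqref{D0CEH} and replace $\mathcal{D}(f)^{(0)}$ by $\mathcal{D}(f_0)$ modulo $\BigO(\varepsilon)$. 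Note that at this order we no longer need the stronger hypothesis \eqref{eps2ass} nor an assumption on $\beta_1$, because the diffusion correction $\varepsilon f_1$ from Lemma~\ref{Liebend_approx} is no longer retained.

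Next I would insert the explicit form $f_0 = j/\tilde\chi$ from Lemma~\ref{Liebend_approx} and invoke \eqref{intDf0} to rewrite the angular mean as
\begin{equation*}
\frac{1}{2}\int_{-1}^{1}\mathcal{D}(f_0)d\mu = \frac{d\beta_0}{c\,dt} + \frac{1}{3}\frac{d\ln\rho}{c\,dt}\omega\frac{\partial \beta_0}{\partial\omega}\,.
\end{equation*}
Equating this to $s = j - \tilde\chi\beta$ (which comes directly from the right-hand side of \eqref{Boltzmann_eps} upon angular integration, using \eqref{collision_vanishes}), and then inserting $\beta_0 = \beta^t$ from the weakened Chapman--Enskog hypothesis together with the decomposition $\beta=\beta^t+\beta^s$, yields \eqref{react_weak} up to $\BigO(\varepsilon)$. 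Comparing with the trapped particle equation \eqref{eq:trapped} and using the convention \eqref{f_t_is_beta_t} immediately identifies $\Sigma_{\rm reac} = \tilde\chi\beta^s$ as an $\BigO(\varepsilon)$ approximation of the angularly averaged source.

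For the final statement about $\tilde\Sigma_{\rm reac}=0$, I would argue purely by order counting: the weakened Chapman--Enskog hypothesis gives the formal identity $\beta^s = \sum_{i=1}^{\infty}\varepsilon^i\beta_i$. Under the additional assumption $\beta_1=\BigO(\varepsilon)$, the leading $\varepsilon\beta_1$ term is $\BigO(\varepsilon^2)$, so that $\beta^s = \BigO(\varepsilon^2)$, hence also $\tilde\chi\beta^s = \BigO(\varepsilon^2)$. Consequently, replacing $\Sigma_{\rm reac}$ by $0$ introduces only an additional $\BigO(\varepsilon^2)$ error, which is absorbed into the already existing $\BigO(\varepsilon)$ approximation, proving that $\tilde\Sigma_{\rm reac}=0$ is likewise an $\BigO(\varepsilon)$ approximation.

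There is no serious obstacle here because the two key technical tools, Lemmas~\ref{weak_lemma} and \ref{Liebend_approx}, do essentially all of the work. The only point that requires some care is conceptual rather than computational: one must verify that the asymptotic statement about the angular mean of the Boltzmann equation can be transferred into an asymptotic statement about the source term $\Sigma$, which requires comparing \eqref{react_weak} with \eqref{eq:trapped} and using the caveat discussed in Remark~\ref{ftnotneciso} that $f^t$ need not itself be isotropic — only the angularly integrated version of the equation is controlled. The remaining book-keeping of orders for the passage from $\beta^s$ to $\tilde\Sigma_{\rm reac}=0$ is then routine.
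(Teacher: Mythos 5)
Your argument follows the paper's own proof of Theorems~\ref{strong_theorem_react} and~\ref{weak_theorem_react} essentially step for step: the relation \eqref{sequ_react} obtained from \eqref{D0CEH} under \eqref{eps1ass}, the insertion of \eqref{intDf0} with $f_0=j/\tilde\chi$, the identifications $\beta_0=\beta^t$ and $\beta=\beta^t+\beta^s$, and the comparison of \eqref{react_weak} with \eqref{eq:trapped} via \eqref{f_t_is_beta_t}. Your remark that \eqref{eps2ass} and the condition on $\beta_1$ from the diffusion case are not needed at this order is also correct and consistent with the hypotheses of the theorem.

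There is, however, one slip in the order counting for the last assertion. From $\beta_1=\BigO(\eps)$ you correctly get $\beta^s=\eps\beta_1+\eps^2\beta_2+\ldots=\BigO(\eps^2)$, but you then conclude $\tilde\chi\beta^s=\BigO(\eps^2)$. In the scaling \eqref{Boltzmann_eps} one has $\tilde\chi=\bar{\tilde\chi}/\eps=\BigO(\eps^{-1})$, so in fact $\tilde\chi\beta^s=\BigO(\eps)$ — which is still exactly enough to absorb the replacement of $\Sigma_{\rm{reac}}$ by $0$ into the $\BigO(\eps)$ approximation, so your final conclusion stands. The paper makes this point explicitly by invoking the hypothesis $\tilde\chi=\BigO(\eps^{-1})$ in the last line of its proof. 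The distinction matters conceptually: if $\tilde\chi$ were $\BigO(1)$, as your count implicitly assumes, then already $\beta^s=\BigO(\eps)$ (which holds without any extra hypothesis on $\beta_1$) would give $\tilde\chi\beta^s=\BigO(\eps)$, and the additional assumption $\beta_1=\BigO(\eps)$ would be superfluous. It is precisely because $\tilde\chi$ is large of order $\eps^{-1}$ that one needs the extra power of $\eps$ in $\beta^s$; without it, $\Sigma_{\rm{reac}}=\tilde\chi\beta^s$ is only $\BigO(1)$ and cannot be dropped, as the paper notes in the paragraph following the theorem.
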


\begin{proof}[Proof of Theorems \ref{strong_theorem_react} and \ref{weak_theorem_react}]
We first note that the asymptotic relation 
\begin{equation}
\label{sequ_react}
\frac{1}{2}\int_{-1}^{1} \mathcal D(f_0) d\mu  = \frac{\bar j}{\varepsilon}- \sum_{i=0}^\infty \left(\frac{\bar{\tilde\chi}}{2\varepsilon}\int_{-1}^1 \varepsilon^{i} f_i d\mu \right) +\BigO(\varepsilon)
\end{equation}
is a consequence of both Lemma~\ref{strong_lemma} and relation~\eqref{D0CEH} in Lemma~\ref{weak_lemma} which follows from \eqref{eps1ass}. We insert the approximation \eqref{intDf0} into \eqref{sequ_react}. Then the equation in Theorem~\ref{strong_theorem_react} follows from $\beta_0 = \beta$. For Theorem~\ref{weak_theorem_react} we derive equation \eqref{react_weak} from  $\beta_0 = \beta^t$ and $\beta=\beta^t+\beta^s$. The second statement of Theorem~\ref{weak_theorem_react} follows from the comparison of \eqref{react_weak} with \eqref{eq:trapped} taking into account \eqref{f_t_is_beta_t}. The last assertion follows immediately from $\beta^s=\beta-\beta^t=\eps\beta_1+\eps^2\beta_2+\ldots$, the hypothesis $\tilde\chi=\BigO(\varepsilon^{-1})$ and the same comparison of equations.
\end{proof}

Summarizing, Theorem \ref{weak_theorem_react} states that the lower bound $0$ for the diffusion source $\Sigma$ in the IDSA \cite[App.\;A]{LiebendoerferEtAl09big} follows from a Chapman--Enskog expansion of first order with the weakened Chapman--Enskog hypothesis and the additional asymptotic properties \eqref{eps1ass} of $\beta^s$ and $\beta_1$ as stated in Lemma \ref{weak_lemma} and in Theorem \ref{weak_theorem_react}. Without the latter assumption on $\beta_1$ we only have $\beta^s=\BigO(\varepsilon)$ and the source term $\Sigma_{\rm{reac}}$ is of order $\BigO(1)$ due to $\tilde\chi=\BigO(\varepsilon^{-1})$. However, since on a physical level the free streaming component $\beta^s$ should become very small in a reaction regime, this additional assumption might be justified.

Finally, Remark~\ref{ftnotneciso} also applies to Theorem~\ref{weak_theorem_react} since here, the angular mean of the Boltzmann equation is compared to the trapped particle equation~\eqref{eq:trapped}, too. In the reaction limit, $\Sigma$ does not contain the diffusion term that represents non-isotropic parts of $f$ as indicated in Remark~\ref{ftnotneciso}. This reflects the fact that in the reaction limit it is physically more realistic for $f^t$ to be at least close to the isotropic equilibrium function $j/\tilde\chi$.


\subsection{Hilbert expansion}
\label{Hilbert}

In the preceding section, we derived the diffusion limit (Subsections~\ref{diffusion_source} and~\ref{Sigma_reaction}) and the reaction limit (Subsection~\ref{Sigma_reaction}) of Boltzmann's equation by means of Chapman--Enskog expansions. In~this section, we will use Hilbert expansions, first in order to derive the diffusion and the reaction limits 
with less assumptions on the asymptotic behavior of the components in the expansion. In fact, it turns out that these additional assumptions (as given in Lemmas~\ref{strong_lemma} or~\ref{weak_lemma}, respectively) were only needed above to obtain a Hilbert expansion setting from a Chapman--Enskog expansion ansatz. Moreover, in a second step, we can even strengthen an assertion by quite naturally deriving approximations of a certain $\eps$-order for the diffusion limit without the restrictive exception of ``up to $\mathcal{D}^+\big(\mathcal{D}^+(\cdot)\big)$ terms'' as in Lemma~\ref{Liebend_approx} and Theorems \ref{strong_theorem}--\ref{weak_theorem_react}. This will be achieved by an additional scaling of the time variable in the operator $\mathcal D$ as done, e.g., in Degond and Jin~\cite{DegondJin05big}, compare also \cite[p.\,131]{CercignaniKremer02}, \cite{BardosSantosSentis84}, 
\cite{GoudonPoupaud01} and \cite{Perthame04} and the literature cited therein. Finally, by just considering the time scaling without the scaling of reactions and collisions, we can also derive the free streaming limit of the IDSA introduced in Subsection~\ref{Sigma_free_streaming}.                                                                                                              
For an introduction to Hilbert expansions we refer to~\mbox{\cite[pp.\;109--115]{FerzigerKaper72}.} The ansatz is the same as for Chapman--Enskog expansions since it starts with a scaling of the right hand side of Boltzmann's equation as in~\eqref{Boltzmann_eps} and an expansion of the distribution function $f$ in powers of $\varepsilon$ as in~\eqref{f_expanded}. However, instead of the sophisticated operator expansion \eqref{D_Chapman_Enskog} that is based on the Enskog postulate~\eqref{f_dep_beta}, Hilbert uses linearity and the formal continuity of $\mathcal{D}$ which leads to the simple expansion
\begin{equation}
\label{D_Hilbert_1}
\mathcal D(f) = \mathcal D(f_0) +\varepsilon\mathcal D(f_1) +\varepsilon^2\mathcal D(f_2) +\dots\,.
\end{equation}
In addition, instead of the Chapman--Enskog hypothesis \eqref{beta_0=beta} and~\eqref{beta_i=0}, Hilbert assumes another condition that leads to unique solvability of the obtained hierarchy of equations which shall not be discussed here. (With regard to this topic we refer the reader to~\cite[p.~111]{FerzigerKaper72}.)

Using the expansions \eqref{f_expanded} for $f$ and \eqref{D_Hilbert_1} for $\mathcal{D}$ in Boltzmann's equation \eqref{Boltzmann_eps} we obtain the hierarchy of equations
\begin{eqnarray}
\label{H1Hie1}
0&=&j+\mathcal J(f_0)\,,\\[1mm]
\label{H1Hie2}
\mathcal D(f_{i-1}) &=& \bar{\mathcal J}(f_i)\,,\quad\quad i = 1,2, \dots\,,
\end{eqnarray}
corresponding to $\varepsilon$-powers $i-1$ for $i=0,1,\dots$. With the scaling in \eqref{Boltzmann_eps} that is applied only to the reactions and the collisions, we now derive Theorems \ref{strong_theorem}--\ref{weak_theorem_react} and, in particular, the diffusion and the reaction limit of Boltzmann's equation in the same way as based on Chapman--Enskog expansions. Remark~\ref{ftnotneciso} also applies here.

\begin{theorem} 
\label{strong_theorem_H}
Theorem \ref{strong_theorem} also holds without assuming Enskog's postulate~\eqref{f_dep_beta} and without the strong Chapman--Enskog hypothesis in Lemma~\ref{strong_lemma}.
\end{theorem}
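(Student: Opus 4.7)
The plan is to mirror the proof of Theorem \ref{strong_theorem} but using the Hilbert hierarchy \eqref{H1Hie1}-\eqref{H1Hie2} in place of the Chapman--Enskog hierarchy \eqref{CEHie1}-\eqref{CEHie2}. The key observation is that in the Hilbert setting the decomposition $\mathcal{D}(f) = \sum_{i=0}^\infty \varepsilon^i \mathcal{D}(f_i)$ in \eqref{D_Hilbert_1} follows immediately from linearity of $\mathcal{D}$, so the equivalence $\mathcal{D}(f)^{(i)} \leftrightarrow \mathcal{D}(f_i)$ that Lemma \ref{strong_lemma} established under the strong Chapman--Enskog hypothesis is simply tautological here. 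Thus neither Enskog's postulate \eqref{f_dep_beta} nor the strong hypothesis \eqref{beta_0=beta}-\eqref{beta_i=0} is needed in order to invoke the downstream machinery.

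First, I would observe that the $i=0$ and $i=1$ equations of the Hilbert hierarchy are formally identical to \eqref{CEHie1} and \eqref{CEHie2} for $i=1$ once one reads $\mathcal{D}(f)^{(0)}$ as $\mathcal{D}(f_0)$. Therefore Lemma \ref{Liebend_approx} applies verbatim and delivers $f_0 = j/\tilde\chi$ (isotropic), the explicit expression \eqref{eps_f1} for $\varepsilon f_1$, and the fundamental identity \eqref{leading_order} for the angular mean.

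Next, angular integration of the scaled Boltzmann equation \eqref{Boltzmann_eps}, together with \eqref{collision_vanishes}, yields directly
\begin{equation*}
\frac{1}{2}\int_{-1}^{1} \mathcal{D}(f)\,d\mu \;=\; j - \tilde\chi\,\beta.
\end{equation*}
Expanding the left-hand side through \eqref{D_Hilbert_1} and truncating the formal series after two terms gives
\begin{equation*}
\frac{1}{2}\int_{-1}^{1}\mathcal{D}(f_0+\varepsilon f_1)\,d\mu \;+\; \BigO(\varepsilon^2),
\end{equation*}
which by \eqref{leading_order} of Lemma \ref{Liebend_approx} equals, up to $\mathcal{D}^+\!\big(\mathcal{D}^+(\cdot)\big)$ terms,
\begin{equation*}
\frac{d\beta_0}{c\,dt} + \frac{1}{3}\frac{d\ln\rho}{c\,dt}\,\omega\,\frac{\partial\beta_0}{\partial\omega} \;-\; \frac{1}{r^2}\frac{\partial}{\partial r}\!\left(r^2\frac{\lambda}{3}\frac{\partial\beta_0}{\partial r}\right).
\end{equation*}
Identifying $\beta_0$ with $\beta$ at this order of approximation then produces exactly the diffusion-reaction-type equation of Theorem \ref{strong_theorem}.

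The one place that deserves care is the $\beta_0 \leftrightarrow \beta$ identification, which in the Chapman--Enskog proof is an exact consequence of \eqref{beta_0=beta}-\eqref{beta_i=0} but here rests purely on the asymptotic convention introduced after \eqref{sequ0}: namely, the difference $\beta-\beta_0 = \varepsilon\beta_1+\varepsilon^2\beta_2+\dots$ is subsumed in the $\BigO(\varepsilon^2)$ truncation of the Hilbert series. This is exactly the substantive content of Theorem \ref{strong_theorem_H}, that the same conclusion is reached without Enskog's postulate or the strong Chapman--Enskog hypothesis, precisely because the operator expansion \eqref{D_Hilbert_1} holds as an identity rather than only after the translation provided by Lemma \ref{strong_lemma}.
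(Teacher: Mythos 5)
Your argument is correct and is essentially the paper's own proof: the paper likewise observes that Lemmas~\ref{strong_lemma} and~\ref{weak_lemma} were introduced only to pass from the Chapman--Enskog hierarchy to the Hilbert one, so that starting directly from \eqref{H1Hie1}--\eqref{H1Hie2} (where \eqref{D_Hilbert_1} is immediate from linearity) the rest of the proof of Theorem~\ref{strong_theorem}, via Lemma~\ref{Liebend_approx}, carries over verbatim. One caveat on your final paragraph: the replacement of $\beta_0$ by $\beta$ on the left-hand side is not justified by the $\BigO(\eps^2)$ truncation alone, since $\beta-\beta_0=\eps\beta_1+\dots$ is only $\BigO(\eps)$; rather, its contribution to the diffusion term is $\BigO(\eps^2)$ because $\lambda=\BigO(\eps)$, while its contribution to the time and energy derivatives is, by \eqref{zeroth_f1} and \eqref{intDf0}, of the type $\mathcal D^+\big(\mathcal D^+(\cdot)\big)$ and hence already covered by the ``up to $\mathcal D^+\big(\mathcal D^+(\cdot)\big)$ terms'' caveat in the statement of Theorem~\ref{strong_theorem}.
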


\begin{theorem}[$\Sigma$ in the diffusion limit II] 
\label{weak_theorem_H}
Theorem \ref{weak_theorem} also holds without assuming Enskog's postulate~\eqref{f_dep_beta}, without the weakened Chapman--Enskog hypothesis in Lemma~\ref{weak_lemma} and without assuming the asymptotic properties \eqref{eps2ass} of $\beta^s$ and $\beta_1$.
\end{theorem}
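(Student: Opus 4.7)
The plan is to rerun the second-order expansion leading to Theorem~\ref{weak_theorem}, but starting from the Hilbert ansatz~\eqref{D_Hilbert_1} in place of the Chapman--Enskog operator expansion~\eqref{D_Chapman_Enskog}. The decisive observation is that \eqref{D_Hilbert_1} follows from the mere linearity (and formal continuity) of $\mathcal D$, so neither Enskog's postulate~\eqref{f_dep_beta} nor any a priori link between $f$ and its moment $\beta$ is required. The sole purpose of Lemmas~\ref{strong_lemma} and~\ref{weak_lemma} in the Chapman--Enskog derivation was to replace $\frac{1}{2}\int_{-1}^{1}\mathcal D(f)^{(i)}\,d\mu$ by $\frac{1}{2}\int_{-1}^{1}\mathcal D(f_i)\,d\mu$ after angular integration; in the Hilbert setting this identification is built in from the start, so those lemmas, and with them the hypotheses~\eqref{eps2ass} on $\beta^s$ and $\beta_1$, drop out.

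Concretely, I would insert \eqref{f_expanded} and \eqref{D_Hilbert_1} into the scaled Boltzmann equation~\eqref{Boltzmann_eps} and collect powers of $\varepsilon$ to arrive at the hierarchy \eqref{H1Hie1},~\eqref{H1Hie2}. Its first equation is identical to~\eqref{CEHie1}, and the second one ($\mathcal D(f_0) = \bar{\mathcal J}(f_1)$) coincides formally with \eqref{CEHie2} for $i=1$ after the automatic identification $\mathcal D(f)^{(0)}=\mathcal D(f_0)$ that Lemma~\ref{Liebend_approx} had to establish by hand. Therefore the conclusions of Lemma~\ref{Liebend_approx} apply verbatim: $f_0 = j/\tilde\chi$ is isotropic, $\varepsilon f_1$ is given by~\eqref{eps_f1}, and the key second-order identity~\eqref{leading_order} holds up to $\mathcal D^+\!\bigl(\mathcal D^+(\cdot)\bigr)$ terms. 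I would then angularly integrate~\eqref{Boltzmann_eps} directly, using \eqref{collision_vanishes}, to obtain
\begin{equation*}
s := \frac{1}{2}\int_{-1}^{1}\mathcal D(f)\,d\mu = j - \tilde\chi\,\beta\,,
\end{equation*}
and expand the left hand side by Hilbert as $s = \sum_{i\ge 0}\varepsilon^{i}\frac{1}{2}\int_{-1}^{1}\mathcal D(f_i)\,d\mu$. Truncation at $i=1$ yields $s = \frac{1}{2}\int_{-1}^{1}\mathcal D(f_0 + \varepsilon f_1)\,d\mu + \BigO(\varepsilon^2)$, the tail being controlled by the angularly integrated hierarchy, which gives $\frac{1}{2}\int_{-1}^{1}\mathcal D(f_i)\,d\mu = -\bar{\tilde\chi}\,\beta_{i+1}$ and hence a remainder of order $\varepsilon^2$ under the natural Hilbert ansatz $\beta_i = \BigO(1)$.

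To finish, I would simply \emph{define} $\beta^t := \beta_0$ and $\beta^s := \beta - \beta_0 = \sum_{i\ge 1}\varepsilon^{i}\beta_i$; these are definitions, not hypotheses, so no asymptotic condition on $\beta^s$ or $\beta_1$ is imposed. Substituting \eqref{leading_order} on the left and $\beta = \beta^t + \beta^s$ on the right then produces exactly~\eqref{diff_react_weak}, and the identification of $\Sigma_{\rm{diff}}$ from~\eqref{diff_source} as an approximation of $\Sigma$ in~\eqref{eq:trapped} follows from the comparison of~\eqref{diff_react_weak} with~\eqref{eq:trapped} via~\eqref{f_t_is_beta_t}, exactly as in the proof of Theorem~\ref{weak_theorem}. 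The main technical point to be careful about is the order bookkeeping for the truncation remainder; unlike in the Chapman--Enskog case, where the interleaving of $\beta$-derivatives in the operators $\frac{\partial_i}{\partial t}$ forced the ad-hoc estimates~\eqref{eps2ass}, here the bound $\sum_{i\ge 2}\varepsilon^{i}\frac{1}{2}\int_{-1}^{1}\mathcal D(f_i)\,d\mu = \BigO(\varepsilon^2)$ is immediate from the hierarchy itself, which is precisely the structural advantage that makes the Hilbert expansion cleaner.
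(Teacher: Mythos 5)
Your proposal is correct and follows essentially the same route as the paper: the paper's own proof simply observes that Lemmas~\ref{strong_lemma} and~\ref{weak_lemma} were only needed to reduce the Chapman--Enskog operator expansion to the Hilbert form after angular integration, so starting directly from the Hilbert hierarchy \eqref{H1Hie1}--\eqref{H1Hie2} makes those hypotheses (and Enskog's postulate) superfluous while the rest of the argument, via Lemma~\ref{Liebend_approx}, carries over verbatim. Your additional remarks --- treating $\beta^t:=\beta_0$, $\beta^s:=\beta-\beta_0$ as definitions rather than hypotheses, and noting that the $\BigO(\eps^2)$ remainder bookkeeping is immediate from the angularly integrated hierarchy --- are consistent with, and make explicit, what the paper leaves implicit.
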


\begin{theorem} 
\label{strong_theorem_react_H}
Theorem \ref{strong_theorem_react} also holds without assuming Enskog's postulate~\eqref{f_dep_beta} and without the strong Chapman--Enskog hypothesis in Lemma~\ref{strong_lemma}.
\end{theorem}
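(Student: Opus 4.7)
The plan is to parallel the proof of Theorem~\ref{strong_theorem_react}, but with the Hilbert hierarchy \eqref{H1Hie1}--\eqref{H1Hie2} replacing the Chapman--Enskog hierarchy \eqref{CEHie1}--\eqref{CEHie2}. The key simplification is that in the Hilbert setting, the operator expansion \eqref{D_Hilbert_1} follows directly from linearity of $\mathcal D$, so the reductions provided by Lemma~\ref{strong_lemma} (which rested on Enskog's postulate and the strong Chapman--Enskog hypothesis) are no longer required.

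First, I would observe that equation \eqref{H1Hie1} is identical to \eqref{CEHie1}, so Lemma~\ref{Liebend_approx} applies verbatim and yields the isotropic equilibrium $f_0 = j/\tilde\chi$, hence $\beta_0 = f_0$. Second, using linearity of $\mathcal D$ to justify \eqref{D_Hilbert_1} and then taking the angular mean of \eqref{Boltzmann_eps}, the hierarchy \eqref{H1Hie2} combined with \eqref{collision_vanishes} gives
$$
s = \frac{1}{2}\int_{-1}^{1} \mathcal D(f_0)\,d\mu + \BigO(\varepsilon) = \frac{\bar j}{\varepsilon} - \frac{\bar{\tilde\chi}}{2\varepsilon}\int_{-1}^1 f\,d\mu\,,
$$
which is the Hilbert analogue of \eqref{sequ_react}, now obtained without any auxiliary hypothesis on the $\beta_i$.

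Third, I would apply \eqref{intDf0} from Lemma~\ref{Liebend_approx} to the left-hand side, yielding
$$
\frac{1}{2}\int_{-1}^{1} \mathcal D(f_0)\,d\mu = \frac{d\beta_0}{c\,dt} + \frac{1}{3}\frac{d\ln\rho}{c\,dt}\,\omega\,\frac{\partial \beta_0}{\partial \omega}\,.
$$
Finally, since $\beta = \sum_{i\geq 0}\varepsilon^i\beta_i = \beta_0 + \BigO(\varepsilon)$ as formal series, substitution into the previous two displayed equations and rearranging the right-hand side $\bar j/\varepsilon - (\bar{\tilde\chi}/\varepsilon)\beta = j - \tilde\chi\beta$ (using $\bar j = \varepsilon j$, $\bar{\tilde\chi} = \varepsilon\tilde\chi$) delivers exactly the reaction-type equation stated in Theorem~\ref{strong_theorem_react}, with an error of order~$\BigO(\varepsilon)$.

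I do not foresee a genuine obstacle; the calculation is essentially bookkeeping built on Lemma~\ref{Liebend_approx}. The only conceptual point to state cleanly is the replacement of Lemma~\ref{strong_lemma} by the trivial identity $\frac{1}{2}\int_{-1}^1 \mathcal D(f)^{(i)}\,d\mu = \frac{1}{2}\int_{-1}^1 \mathcal D(f_i)\,d\mu$ that holds tautologically in a Hilbert expansion, which is precisely what frees the result from Enskog's postulate and the strong Chapman--Enskog hypothesis.
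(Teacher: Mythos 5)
Your proposal is correct and follows essentially the same route as the paper, which proves Theorems~\ref{strong_theorem_H}--\ref{weak_theorem_react_H} collectively by observing that the Hilbert hierarchy \eqref{H1Hie1}--\eqref{H1Hie2} already has the form that Lemmas~\ref{strong_lemma} and~\ref{weak_lemma} were needed to extract from the Chapman--Enskog hierarchy, so that the remainder of the argument (via Lemma~\ref{Liebend_approx} and \eqref{intDf0}) carries over unchanged. Your explicit replacement of the strong-hypothesis identity $\beta_0=\beta$ by $\beta=\beta_0+\BigO(\eps)$ in the last step is a correct and slightly more careful rendering of what the paper leaves implicit.
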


\begin{theorem}[$\Sigma$ in the reaction limit II] 
\label{weak_theorem_react_H}
\mbox{}\!\!Theorem \ref{weak_theorem_react} also holds without assuming Enskog's postulate~\eqref{f_dep_beta}, without the weakened Chapman--Enskog hypothesis in Lemma~\ref{weak_lemma} and without assuming the asymptotic property~\eqref{eps1ass} of $\beta^s$.
\end{theorem}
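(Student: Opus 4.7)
The strategy is to mirror the proof of Theorem~\ref{weak_theorem_react} but to replace the Chapman--Enskog hierarchy \eqref{CEHie1}--\eqref{CEHie2} by the Hilbert hierarchy \eqref{H1Hie1}--\eqref{H1Hie2}. The two hierarchies differ only in that $\mathcal{D}(f)^{(i)}$ gets replaced by $\mathcal{D}(f_i)$ from $i=1$ on, while the leading equation \eqref{H1Hie1} is literally identical to \eqref{CEHie1}. Therefore Lemma~\ref{Liebend_approx}, whose conclusion $f_0=j/\tilde\chi$ uses only the first hierarchy equation together with the form \eqref{coll} of the collision integral, carries over verbatim, as does the identity \eqref{intDf0} for $\frac{1}{2}\int_{-1}^1\mathcal{D}(f_0)\,d\mu$; these facts require no structural assumption on $f$ beyond linearity of $\mathcal D$, which is already built into \eqref{D_Hilbert_1}.

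Next, I would insert \eqref{f_expanded} and \eqref{D_Hilbert_1} directly into the scaled Boltzmann equation \eqref{Boltzmann_eps}, take the angular mean of both sides, and use \eqref{collision_vanishes} to obtain
\begin{equation*}
\frac{1}{2}\int_{-1}^{1}\mathcal D(f_0)\,d\mu \;=\; \frac{\bar j}{\eps}-\frac{\bar{\tilde\chi}}{2\eps}\int_{-1}^{1} f\,d\mu + \BigO(\eps),
\end{equation*}
which is the Hilbert analogue of \eqref{sequ_react}. Crucially, the error term $\BigO(\eps)$ here is immediate from the Hilbert expansion and needs neither Enskog's postulate \eqref{f_dep_beta}, nor the weakened hypothesis of Lemma~\ref{weak_lemma}, nor the asymptotic property \eqref{eps1ass}: those assumptions were only required in the Chapman--Enskog derivation in order to pass from $\mathcal{D}(f)^{(0)}$ to $\mathcal{D}(f_0)$ via \eqref{D0CEH}. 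I then set $\beta^t:=\beta_0$ and $\beta^s:=\sum_{i\ge 1}\eps^i\beta_i$, so that $\beta=\beta^t+\beta^s$ holds by construction (this replaces the Chapman--Enskog hypothesis by a mere definition intrinsic to the Hilbert expansion), and substitute \eqref{intDf0} to obtain \eqref{react_weak} as a $\BigO(\eps)$ approximation to the angular mean of \eqref{Boltzmann_eps}.

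Comparing \eqref{react_weak} with the trapped particle equation \eqref{eq:trapped} via \eqref{f_t_is_beta_t} yields $\Sigma_{\mathrm{reac}}=\tilde\chi\beta^s$ as a $\BigO(\eps)$ approximation of $\Sigma$. For the final assertion, I would additionally impose $\beta_1=\BigO(\eps)$; then by definition $\beta^s=\eps\beta_1+\eps^2\beta_2+\dots=\BigO(\eps^2)$, while the reaction scaling in \eqref{Boltzmann_eps} gives $\tilde\chi=\bar{\tilde\chi}/\eps=\BigO(\eps^{-1})$ with $\bar{\tilde\chi}$ independent of $\eps$. Therefore $\Sigma_{\mathrm{reac}}=\tilde\chi\beta^s=\BigO(\eps)$, so $\tilde\Sigma_{\mathrm{reac}}:=0$ is a $\BigO(\eps)$ approximation of $\Sigma$ as well.

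The argument has essentially no genuine obstacle: the point of using the Hilbert expansion is precisely that its linear operator expansion \eqref{D_Hilbert_1} circumvents the subtle operator expansion \eqref{D_Hilbert_1} via \eqref{partial_i} that made the Chapman--Enskog framework require the auxiliary hypotheses. The only minor subtlety worth making explicit is the reinterpretation of the decomposition $f=f^t+f^s$: in the Hilbert setting it is not externally given, but induced by the expansion through $\beta^t:=\beta_0$ and $\beta^s:=\sum_{i\ge 1}\eps^i\beta_i$, and this reinterpretation is what replaces the weakened Chapman--Enskog hypothesis here and in Theorems~\ref{strong_theorem_H}--\ref{weak_theorem_H}.
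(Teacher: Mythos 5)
Your proposal is correct and follows essentially the same route as the paper: the paper's proof of Theorems~\ref{strong_theorem_H}--\ref{weak_theorem_react_H} likewise observes that the Hilbert hierarchy \eqref{H1Hie1}--\eqref{H1Hie2} already has the form that Lemmas~\ref{strong_lemma} and~\ref{weak_lemma} were introduced to recover from the Chapman--Enskog hierarchy, so those hypotheses become superfluous and the remainder of the proof of Theorem~\ref{weak_theorem_react} (via Lemma~\ref{Liebend_approx}, \eqref{intDf0}, the angular mean \eqref{sequ_react}, and the identifications $\beta^t=\beta_0$, $\beta^s=\sum_{i\geq 1}\eps^i\beta_i$) carries over unchanged, including the final step $\tilde\chi=\BigO(\eps^{-1})$, $\beta^s=\BigO(\eps^2)$. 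The only blemish is a slip in your closing remark, where the Chapman--Enskog operator expansion \eqref{D_Chapman_Enskog} is mistakenly cited as \eqref{D_Hilbert_1}.
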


The proofs of Theorems \ref{strong_theorem_H}--\ref{weak_theorem_react_H} can now be based on the hierarchy of equations \eqref{H1Hie1} and \eqref{H1Hie2} according to Hilbert instead of the hierarchy of equations \eqref{CEHie1} and \eqref{CEHie2} according to Chapman and Enskog. Lemmas~\eqref{strong_lemma} and \eqref{weak_lemma} were only established to be able to use the former if one starts with the latter. Therefore, the assumptions in these lemmas are not needed here, and the other parts of the proofs are exactly the same as above since they are anyway based on the operator expansion~\eqref{D_Hilbert_1}. These arguments make it evident that for our purposes, Hilbert expansions seem to provide a more natural approach than Chapman--Enskog expansions. 

In the next result, we get rid of yet another restriction, now in the assertion of Theorem \ref{weak_theorem} or~\ref{weak_theorem_H} in which the approximations are in fact of order $\BigO(\eps^2)$ without the need to neglect any further terms. This result will quite naturally be obtained by the introduction of a time scaling 
\begin{equation}
\label{time_scaling}
t=\frac{\bar t}{\eps}
\end{equation}
in addition to the scaling of reactions and collisions in~\eqref{Boltzmann_eps}, compare~\cite{DegondJin05big} {or~\cite[p.\;131]{CercignaniKremer02}}. Concretely, we consider the scaled Boltzmann equation
\begin{equation}
\label{Boltzmann_eps_teps}
\mathcal D_\eps(f)= \varepsilon^{-1} \left(\bar{j}+ \bar{\mathcal J}(f) \right)
\end{equation}
where $\mathcal D_\eps$ represents the operator $\mathcal D$ with scaled time~\eqref{time_scaling}. As in \eqref{Boltzmann_eps}, we have
\begin{equation*}
j=\frac{\bar{j}}{\varepsilon}\,,\quad \tilde\chi=\frac{\bar{\tilde{\chi}}}{\varepsilon}\,,\quad R=\frac{\bar{R}}{\varepsilon}\,,
\end{equation*}
in which the latter two quantities give $\mathcal J=\bar{\mathcal J}/\varepsilon$ according to \eqref{Boltz} and~\eqref{isoscattering} and in which $R=\bar{R}/\varepsilon$ is induced by $\phi_0=\bar{\phi}_0/\varepsilon$ and $\phi_1=\bar{\phi}_1/\varepsilon$ in the approximate case~\eqref{coll}. Again, we assume the terms with a bar to be independent of~$\varepsilon$. 

By the time scaling \eqref{time_scaling}, the velocity $v=dx/dt$ of the background matter is scaled as
$$
v=\eps\bar v
$$
which by \eqref{D+} and \eqref{D-} results in a scaling of the operator $\mathcal D$ as
\begin{equation}
\label{D_eps}
\mathcal D_\eps=\eps\mathcal D^++\mathcal D^-
\end{equation}
since the time dependency is completely contained in the denominator of the coefficients in the part $\mathcal D^+$ of the operator $\mathcal D$ which is symmetric with respect to $\mu$.

\begin{remark}
By this time scaling, we consider big reaction and collision rates over long times and small velocities of the background matter, which results in a smaller influence of the aberration and the Doppler effect arising from the motion of the matter described by the part $\mathcal D^+$ of the Boltzmann operator~$\mathcal D$. We do not discuss here whether this additional scaling of time is physically reasonable for supernova neutrinos in the diffusion regime for which we consider it. In order to decide this, we would need a detailed scale analysis of the Boltzmann equation for radiative transfer that should be performed elsewhere. However, the same scaling has already been successfully applied for the Boltzmann equation in gas dynamics~\cite{DegondJin05big}. We also refer to~\cite[Rem.~5]{BardosSantosSentis84}
where for the diffusion approximation of a transport equation with the same scaling structure as \eqref{Boltzmann_eps_teps} with \eqref{D_eps}, the scaling of the mean free path and of the time has been rigorously justified. Analogously scaled kinetic equations are, e.g., considered in \cite{GoudonPoupaud01} and \cite[Sec.~5.1]{Perthame04} with respect to the diffusion limit (without the time scaling one rather obtains the hydrodynamic limit, see, e.g., \cite[Sec.~5.1]{Perthame04}, \cite{DegondEtAl05} {or~\cite{SpeckStrain10}}). Finally, also in our case of radiative transfer, the additional time scaling turns out to be well suited for the derivation of the diffusion source as we will see in the following.
\end{remark}

Inserting the expansion \eqref{f_expanded} for $f$ into \eqref{D_Hilbert_1} with the scaled operator $\mathcal D=\mathcal D_\eps$ leads to the formal series
\begin{equation}
\label{D_Hilbert_2}
\mathcal D_\eps(f) = \mathcal D^-(f_0) +\varepsilon\big(\mathcal D^+(f_0)+\mathcal D^-(f_1)\big) +\varepsilon^2\big(\mathcal D^+(f_1)+\mathcal D^-(f_2)\big) +\dots\,.
\end{equation}

With this series as the left hand side of the time and reaction scaled Boltzmann equation~\eqref{Boltzmann_eps_teps}, we obtain the hierarchy of equations
\begin{eqnarray}
\label{H2Hie1}
0&=&j+\mathcal J(f_0)\,,\\[1mm]
\label{H2Hie2}
\mathcal D^{-}(f_{0}) &=& \bar{\mathcal J}(f_1)\,,\\[1mm]
\label{H2Hie3}
\mathcal D^+(f_{i-2})+\mathcal D^-(f_{i-1}) &=& \bar{\mathcal J}(f_i)\,,\quad\quad i = 2,3, \dots\,,
\end{eqnarray}
that corresponds to $\varepsilon$-powers $i-1$ for $i=0,1,\dots$.

The fact that the components in expansion \eqref{D_Hilbert_2} corresponding to the operator part $\mathcal D^+$ appear in a higher order than the ones corresponding to $\mathcal D^-$ allows us to derive the diffusion-reaction-type approximation of the Boltzmann equation as well as diffusion source up to the order $\BigO(\eps^2)$ without having to neglect $\mathcal D^+\big(\mathcal D^+(\cdot)\big)$ terms since they are now naturally of higher order.

\begin{theorem}[$\Sigma$ in the diffusion limit III] 
\label{weak_theorem_H_time}
Let $\mathcal C(f)$ be given by \eqref{coll} with $\tilde\chi\neq 0$, $\tilde\chi+\phi_0\neq 0$ and $\tilde\chi+\phi_0-\phi_1\neq 0$. We consider the decomposition
$f = f^t + f^s$ inducing $\beta = \beta^t +\beta^s$ with the angular means $\beta^t=\frac{1}{2}\int_{-1}^1 f^t d\mu$ and $\beta^s=\frac{1}{2}\int_{-1}^1 f^s d\mu$. Then the diffusion-reaction-type equation \eqref{diff_react_weak}
is an approximation of the angular mean of Boltzmann's equation \eqref{Boltzmann_eps_teps} of order $\BigO(\varepsilon^2)$. Furthermore, expression \eqref{diff_source}
is an approximation of order $\BigO(\varepsilon^2)$ of the (angular mean of the) diffusion source~$\Sigma$ in~\eqref{eq:trapped}. 
\end{theorem}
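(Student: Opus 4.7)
The plan is to replay the proof of Theorem~\ref{weak_theorem_H} but with the Hilbert hierarchy \eqref{H2Hie1}--\eqref{H2Hie3} arising from the time-scaled operator expansion \eqref{D_Hilbert_2}, and to exploit the decisive structural feature that the symmetric part $\mathcal{D}^+$ of the transport operator now carries an extra factor of $\eps$ at every appearance in $\mathcal{D}_\eps$. This is what promotes the previously problematic $\mathcal{D}^+\!\big(\mathcal{D}^+(\cdot)\big)$ contributions to order $\eps^2$, so that they are automatically absorbed into the remainder and the ``$\doteq$'' in the corresponding step of Lemma~\ref{Liebend_approx} can be upgraded to an exact equality.

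First I would solve \eqref{H2Hie1} exactly as in Lemma~\ref{Liebend_approx} to obtain the isotropic $f_0=j/\tilde\chi$. Equation \eqref{H2Hie2} then has the same algebraic structure as the equation for $\eps f_1$ in that lemma, except that its left-hand side is $\mathcal{D}^-(f_0)$ in place of the full $\mathcal{D}(f_0)$. Repeating the zeroth and first angular moment arguments (using that $\tfrac12\!\int_{-1}^1\mathcal{D}^-(f_0)\,d\mu=0$ by antisymmetry and that $\tfrac12\!\int_{-1}^1\mathcal{D}^-(f_0)\mu\,d\mu=\tfrac13\tfrac{\partial \beta_0}{\partial r}$) would yield the clean identity
\[
\eps f_1=-\lambda\,\mathcal{D}^-(f_0),
\]
which is the direct analogue of \eqref{eps_f1} with $\mathcal{D}$ replaced by its antisymmetric part and in which no $\mathcal{D}^+$-contribution survives.

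With these ingredients I would take the angular mean of \eqref{Boltzmann_eps_teps}. Applying \eqref{D_Hilbert_2} and truncating the left-hand side at order $\eps^2$, the coefficient of $\eps^0$ vanishes since $\tfrac12\!\int_{-1}^1\mathcal{D}^-(f_0)\,d\mu=0$; the coefficient of $\eps$ splits into $\tfrac12\!\int_{-1}^1\mathcal{D}^+(f_0)\,d\mu$, handled by \eqref{intDf0}, and $\tfrac12\!\int_{-1}^1\mathcal{D}^-(\eps f_1)\,d\mu$, which by the final equality chain in \eqref{intDeps_f1} (after substituting $\eps f_1=-\lambda\mathcal{D}^-(f_0)$) equals $-\tfrac{1}{r^2}\tfrac{\partial}{\partial r}\!\big(r^2\tfrac{\lambda}{3}\tfrac{\partial \beta_0}{\partial r}\big)$. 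The right-hand side of \eqref{Boltzmann_eps_teps}, after angular integration, is $j-\tilde\chi(\beta^t+\beta^s)$ with $\beta_0=\beta^t$ and $\beta^s=\sum_{i\ge 1}\eps^i\beta_i$ (note that the zeroth moment of \eqref{H2Hie2} additionally forces $\beta_1=0$). Equating the two sides would deliver \eqref{diff_react_weak} with an $\BigO(\eps^2)$ residual, and the comparison with the trapped particle equation~\eqref{eq:trapped} then identifies $\Sigma_{\rm{diff}}$ in \eqref{diff_source} as claimed. Remark~\ref{ftnotneciso} carries over verbatim, so no isotropy assumption on $f^t$ is needed.

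The main point to verify, and indeed the heart of the improvement over Theorem~\ref{weak_theorem_H}, is that every potential $\mathcal{D}^+\!\big(\mathcal{D}^+(\cdot)\big)$ contribution that spoiled the analogous step in Lemma~\ref{Liebend_approx} now only appears at order $\eps^2$ or higher. This is transparent here because $\eps f_1=-\lambda\mathcal{D}^-(f_0)$ is free of $\mathcal{D}^+$, so the only $\mathcal{D}^+$ acting at order $\eps$ in the expansion of $\tfrac12\!\int\mathcal{D}_\eps(f)\,d\mu$ is the single occurrence in the coefficient of $\eps$ in \eqref{D_Hilbert_2}; all further $\mathcal{D}^+$ applications enter through $\eps^2\mathcal{D}^+(f_1)$ and are manifestly $\BigO(\eps^2)$, so no terms need to be discarded by hand.
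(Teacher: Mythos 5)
Your proposal is correct and follows essentially the same route as the paper: solve \eqref{H2Hie1} to get the isotropic $f_0=j/\tilde\chi$, observe that \eqref{H2Hie2} yields the simplified first corrector (the paper's \eqref{eps_f1-}, which indeed collapses to $\eps f_1=-\lambda\,\mathcal D^-(f_0)$ since $\mathcal D^-(f_0)=\mu\,\partial_r f_0$), note that the absence of any $\mathcal D^+$ contribution in $\eps f_1$ upgrades the ``$\doteq$'' of Lemma~\ref{Liebend_approx} to an equality, and then sum the angular means of the first three hierarchy equations and compare with \eqref{eq:trapped}. Your added observations ($\beta_1=0$ from the zeroth moment of \eqref{H2Hie2}, and the bookkeeping that all residual $\mathcal D^+$ applications enter only through $\eps^2\mathcal D^+(f_1)$) are consistent with, and implicit in, the paper's argument.
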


The proof reuses arguments by which Lemma~\ref{Liebend_approx} was proved. In fact, it relies entirely on the variant of Lemma~\ref{Liebend_approx} that is based on the Hilbert hierarchy \eqref{H2Hie1}--\eqref{H2Hie3} instead of the Chapman--Enskog hierarchy \eqref{CEHie1} and~\eqref{CEHie2}. This variant is proved in the following.

\begin{proof}[Proof of Theorem \ref{weak_theorem_H_time}]
The first equation \eqref{H2Hie1} of the hierarchy \eqref{H2Hie1}--\eqref{H2Hie3} is the same as the first equation \eqref{CEHie1} in the Chapman--Enskog hierarchy \eqref{CEHie1} and \eqref{CEHie2} considered in Lemma~\ref{Liebend_approx} and, therefore, has the unique solution $f_0=j/\tilde\chi$ for which \eqref{intDf0} holds. 

The second equation \eqref{H2Hie2} is treated as equation \eqref{CEHie2} for $i=1$ with \eqref{Df0} in the proof of Lemma~\ref{Liebend_approx}. We only need to replace $\mathcal{D}$ by $\mathcal{D}^-$ in \eqref{zeroth_f1}--\eqref{Df0-f1} and in the result~\eqref{eps_f1}. The formulas even simplify since by the first equation in \eqref{intDf0} the right hand side in \eqref{zeroth_f1}, i.e., the angular mean of $f_1$, and the second summand in \eqref{eps_f1} vanish so that the latter now reads
\begin{equation}
\label{eps_f1-}
\varepsilon f_1 =\frac{-1}{\tilde\chi +\phi_0}\left(\mathcal D^-(f_0) + 3\mu\phi_1 \lambda\frac{1}{2}\int_{-1}^1\mathcal D^-(f_0)\mu d\mu\right)\,.
\end{equation}
By the proof of Lemma~\ref{Liebend_approx} it becomes clear that with this result the assertions \eqref{intDeps_f1} and \eqref{leading_order} are even true if we replace ``$\doteq$'' by ``$=$'' 
since $\mathcal D^+\big(\mathcal D^+(\cdot)\big)$ terms no longer occur.

To finish the proof we add the angular means of the first three equations in the hierarchy \eqref{H2Hie1}--\eqref{H2Hie3} and, considering again \eqref{collision_vanishes}, obtain
\begin{align*}
\frac{1}{2}\int_{-1}^{1} \big(\mathcal D(f_0) + \mathcal D^-(\varepsilon f_1)\big) d\mu &= j+\frac{\tilde\chi}{2}\int_{-1}^1\left(\sum_{i=0}^2\eps^i f_i\right)d\mu\\[2mm]
&=j+\frac{\tilde\chi}{2}\int_{-1}^1\left(\sum_{i=0}^\infty\eps^i f_i\right)d\mu+\BigO(\varepsilon^2)
=j+\tilde\chi\beta+\BigO(\varepsilon^2)\,,
\end{align*}
taking the formal limit in the series and considering that $j$ and $\tilde\chi$ are of order~$\BigO(\varepsilon^{-1})$. The required diffusion-reaction-type equation \eqref{diff_react_weak} as an approximation of \eqref{Boltzmann_eps_teps} of order $\BigO(\varepsilon^2)$ is obtained by inserting \eqref{intDf0} and \eqref{intDeps_f1} on the left hand side while setting $\beta_0 = \beta^t$ and $\beta=\beta^t+\beta^s$. The corresponding approximation of the diffusion source \eqref{diff_source} follows from the comparison of \eqref{diff_react_weak} with~\eqref{eq:trapped} while taking into account~\eqref{f_t_is_beta_t}.
\end{proof}

For more details concerning the statement on the diffusion source in Theorem~\ref{weak_theorem_H_time} compared to its appearance in the trapped particle equation~\ref{eq:trapped} we refer to Remark~\ref{ftnotneciso} that also applies here.

\if 0
\begin{remark}
If one treats equation \eqref{H2Hie3} for the order $\eps^2$, i.e.,
\begin{equation}
\label{H_second_order}
\mathcal D^+(f_{1})+\mathcal D^-(f_{2}) = \bar{\mathcal J}(f_2)
\end{equation}
according to \eqref{zeroth_f1}--\eqref{Df0-f1}, one obtains an analogous formula as \eqref{eps_f1} for $\eps f_2$. Concretely, one only needs to replace $\mathcal{D}(f_0)$ by $\mathcal{D}^+(f_0)+\mathcal{D}^-(\eps f_1)$ on the right hand side of~\eqref{eps_f1}. Plugging this into \eqref{H_second_order} and taking into account that $f_1$ in \eqref{eps_f1-} is only made up by $\mathcal D^-$ terms, one can see that there are still no $\mathcal D^+\big(\mathcal D^+(\cdot)\big)$ terms as in \eqref{D++terms} occurring in the second order equation of the hierarchy \eqref{H2Hie1}--\eqref{H2Hie3}. These terms only appear in the equation for the order $\eps^3$ since this one contains the term $\mathcal D^+(f_{2})$ and the formula for $\eps f_2$ is also made up by a $\mathcal{D}^+(f_0)$ term as just seen.
\end{remark}

\begin{remark}
The reaction-type equation \eqref{react_weak} is not available with the time scaling \eqref{time_scaling} since the angular means of the first two equations \eqref{H2Hie1} and \eqref{H2Hie2} only provide the zero operator on the left hand side. In order to achieve this with the first two equations of the hierarchy, one would need the (inverse) operator scaling $\mathcal D^++\eps \mathcal D^-$ instead of~\eqref{D_eps}. This could be achieved with the inverse time scaling $t=\eps \bar t$ (i.e., short times considered) while scaling the right hand side by $\eps^{-2}$ (i.e., very large reaction and collision rates considered). However, the third equation \eqref{H2Hie3} of the hierarchy would then be made up entirely by $\mathcal D^+\big(\mathcal D^+(\cdot)\big)$ terms on its left hand side.
\end{remark}

\begin{remark}
If we consider very long times, i.e., the time scaling $t=\bar t/\eps^2$ with the reaction and collision scaling as in \eqref{Boltzmann_eps}, then the first $\mathcal{D}^+$ term, in particular the time derivative, only appears in the fourth equation of the hierarchy. Analogously as in the proof of Theorem~\ref{weak_theorem_H_time}, we then obtain only the diffusion term in \eqref{intDeps_f1} by the first three equations, i.e., a stationary diffusion equation as an approximation of the corresponding scaled Boltzmann equation of order $\BigO(\eps^2)$. However, if we compare this equation with \eqref{eq:trapped} we obtain the reaction term in the source term $\Sigma$. 
\end{remark}

\begin{remark}
There is also a version of Theorem \ref{weak_theorem_H_time} based on a Chapman--Enskog expansion instead of on a Hilbert expansion. In this context, the time derivative in the term $\mathcal D^+(f_{i-2})$ in \eqref{H2Hie3} would be replaced by the corresponding order term as in~\eqref{dt_Cauchy}. To establish the result corresponding to Theorem \ref{weak_theorem_H_time} for the Chapman--Enskog expansion with the same time scaling \eqref{time_scaling}, we need to force that term to be close enough to the time derivative term in $\mathcal D^+(f_{i-2})$ for $i=2$ as done in Lemma~\ref{weak_lemma}. Concretely, we need assertion \eqref{D0CEH} in which $\BigO(\eps)$ is replaced by $\BigO(\eps^2)$ in order to relate the angular mean of the operator in the Chapman--Enskog expansion with the one in the Hilbert expansion up to the correct order. To guarantee this, the proof of Lemma~\ref{weak_lemma} shows that we need assumption \eqref{eps1ass} in which $\BigO(\eps)$ is replaced by $\BigO(\eps^2)$.
\end{remark}
\fi

\begin{remark}
If one treats equation \eqref{H2Hie3} for the order $\eps^2$, i.e.,
\begin{equation}
\label{H_second_order}
\mathcal D^+(f_{1})+\mathcal D^-(f_{2}) = \bar{\mathcal J}(f_2)
\end{equation}
according to \eqref{zeroth_f1}--\eqref{Df0-f1}, one obtains an analogous formula as \eqref{eps_f1} for $\eps f_2$. Concretely, one only needs to replace $\mathcal{D}(f_0)$ by $\mathcal{D}^+(f_0)+\mathcal{D}^-(\eps f_1)$ on the right hand side of~\eqref{eps_f1}. Plugging this into \eqref{H_second_order} and taking into account that $f_1$ in \eqref{eps_f1-} is only made up by $\mathcal D^-$ terms, one can see that there are still no $\mathcal D^+\big(\mathcal D^+(\cdot)\big)$ terms as in \eqref{D++terms} occurring in the second order equation of the hierarchy \eqref{H2Hie1}--\eqref{H2Hie3}. These terms only appear in the equation for the order $\eps^3$ since this one contains the term $\mathcal D^+(f_{2})$ and the formula for $\eps f_2$ is also made up by a $\mathcal{D}^+(f_0)$ term as just seen.\\
\indent 
The reaction-type equation \eqref{react_weak} is not available with the time scaling \eqref{time_scaling} since the angular means of the first two equations \eqref{H2Hie1} and \eqref{H2Hie2} only provide the zero operator on the left hand side. In order to achieve this with the first two equations of the hierarchy, one would need the (inverse) operator scaling $\mathcal D^++\eps \mathcal D^-$ instead of~\eqref{D_eps}. This could be achieved with the inverse time scaling $t=\eps \bar t$ (i.e., short times considered) while scaling the right hand side by $\eps^{-2}$ (i.e., very large reaction and collision rates considered). However, the third equation \eqref{H2Hie3} of the hierarchy would then be made up entirely by $\mathcal D^+\big(\mathcal D^+(\cdot)\big)$ terms on its left hand side.\\
\indent 
Conversely, if we consider very long times, i.e., the time scaling $t=\bar t/\eps^2$ with the reaction and collision scaling as in \eqref{Boltzmann_eps}, then the first $\mathcal{D}^+$ term, in particular the time derivative, only appears in the fourth equation of the hierarchy. As in the proof of Theorem~\ref{weak_theorem_H_time}, we then obtain only the diffusion term in \eqref{intDeps_f1} by the first three equations, i.e., a stationary diffusion equation as an approximation of the corresponding scaled Boltzmann equation of order $\BigO(\eps^2)$. However, if we compare this equation with \eqref{eq:trapped}, we obtain the reaction term in the source term $\Sigma$.\\
\indent
Finally, there is also a version of Theorem \ref{weak_theorem_H_time} based on a Chapman--Enskog expansion and Enskog's postulate \eqref{f_dep_beta} instead of a Hilbert expansion. In this context, the time derivative in the term $\mathcal D^+(f_{i-2})$ in \eqref{H2Hie3} would be replaced by the corresponding order term as in~\eqref{dt_Cauchy}. To establish the result corresponding to Theorem~\ref{weak_theorem_H_time} for the Chapman--Enskog expansion with the same time scaling \eqref{time_scaling}, we need to force that term to be close enough to the time derivative term in $\mathcal D^+(f_{i-2})$ for $i=2$ as done in Lemma~\ref{weak_lemma}. Concretely, we need assertion \eqref{D0CEH} in which $\BigO(\eps)$ is replaced by~$\BigO(\eps^2)$ in order to relate the angular mean of the operator in the Chapman--Enskog expansion with the one in the Hilbert expansion up to the correct order. To guarantee this, the proof of Lemma~\ref{weak_lemma} shows that we need assumption \eqref{eps1ass} in which $\BigO(\eps)$ is replaced by $\BigO(\eps^2)$.
\end{remark}


In order to obtain the free streaming limit we need to consider long times 
while assuming moderate reaction and collision rates. Therefore, we consider the time scaling \eqref{time_scaling}, i.e., the operator scaling \eqref{D_eps}, and no longer impose the scaling on the reaction and the collision terms, thus we deal with the scaled Boltzmann equation
\begin{equation}
\label{Boltzmann_teps}
\eps\mathcal D^+(f)+\mathcal D^-(f)=j+\mathcal{J}(f)\,.
\end{equation}

With this scaling of the Boltzmann equation and the expansion \eqref{D_Hilbert_2} of the left hand side, collecting the same $\varepsilon$-powers, we arrive at the hierarchy of equations
\begin{eqnarray}
\label{H3Hie1}
\mathcal D^{-}(f_{0})&=&j+\mathcal J(f_0)\,,\\[1mm]
\label{H3Hie2}
\mathcal D^+(f_{i-1})+\mathcal D^{-}(f_{i}) &=& \mathcal J(f_i)\,,\quad\quad i = 1,2, \dots\,,
\end{eqnarray}
collecting the terms of the same $\varepsilon$-powers $\eps^i$, $i=0,1,2,\dots$. The first equation \eqref{H3Hie1} of this hierarchy is not time-dependent and turns out to provide the stationary state limit for the streaming component $f^s$ of $f$. Since only the angular mean of \eqref{H3Hie1} is considered, we do not need to assume the special form \eqref{coll} for the collisions since by \eqref{collision_vanishes} they do not play any role here. However, in contrast to \eqref{D-f0} and the derivation of \eqref{intDeps_f1}, we cannot assume isotropy of $f_0$ here. 

\begin{theorem}[$\Sigma$ in the free streaming limit]
\label{Free_thm}
We consider the decomposition $f = f^t + f^s$ inducing $\beta = \beta^t +\beta^s$ with the angular means $\beta^t=\frac{1}{2}\int_{-1}^1 f^t d\mu$ and $\beta^s=\frac{1}{2}\int_{-1}^1 f^s d\mu$ as well as the expansions \eqref{f_expanded} and \eqref{beta_expanded} for $f$ and its angular mean~$\beta$. Then the stationary state equation
\begin{equation}
\label{FreeBol2}
\frac{1}{r^2}\frac{\partial}{\partial r}\left( r^2\frac{1}{2}\int_{-1}^1f^s\mu d\mu \right) = j-\tilde\chi \beta^s
\end{equation}
is an approximation of the angular mean of Boltzmann's equation \eqref{Boltzmann_teps} of order $\BigO(\eps)$ with $f^s=f_0$ and thus $\beta^s = \beta_0$ and $\beta^t = \beta-\beta^s = \BigO(\varepsilon)$. In addition,
\begin{equation*}
\Sigma_{\rm{free}} := j
\end{equation*}
is an approximation of order $\BigO(\eps)$ of the (angular mean of the) diffusion source $\Sigma$ in~\eqref{eq:streaming}. 
\end{theorem}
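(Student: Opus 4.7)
The plan is to extract the assertion from the $\eps^0$-equation of the hierarchy, \eqref{H3Hie1}, by taking its angular mean and making the identification $f_0 = f^s$. First I would apply $\frac{1}{2}\int_{-1}^1(\cdot)\,d\mu$ to \eqref{H3Hie1}. Using $\mathcal{J}(f_0) = -\tilde\chi f_0 + \mathcal{C}(f_0)$ together with property \eqref{collision_vanishes}, the right hand side collapses to $j - \tilde\chi\beta_0$.

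Next I would compute the angular mean of the left hand side directly from definition \eqref{D-}. The contribution of $\mu\,\partial f_0/\partial r$ produces $\partial_r\bigl(\tfrac{1}{2}\int_{-1}^1\mu f_0\,d\mu\bigr)$. The contribution of $\frac{1}{r}(1-\mu^2)\partial f_0/\partial\mu$ is integrated by parts in $\mu$; the boundary terms vanish because $(1-\mu^2)$ vanishes at $\mu=\pm 1$, and the remaining integral equals $\frac{2}{r}\cdot\tfrac{1}{2}\int_{-1}^1\mu f_0\,d\mu$. By the product rule these two contributions combine into the radial divergence $\frac{1}{r^2}\partial_r\bigl(r^2\cdot\tfrac{1}{2}\int_{-1}^1\mu f_0\,d\mu\bigr)$, in agreement with the spherical formula \eqref{nabla_div_in_r}. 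Identifying $f_0=f^s$ (and hence $\beta_0=\beta^s$, while $f^t=f-f_0=\eps f_1+\eps^2 f_2+\ldots=\BigO(\eps)$) then delivers exactly \eqref{FreeBol2}.

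The order $\BigO(\eps)$ arises because the Hilbert hierarchy has been truncated after the $\eps^0$-equation: compared with the full angular mean of \eqref{Boltzmann_teps}, the missing terms are $\eps\cdot\tfrac{1}{2}\int_{-1}^1\mathcal{D}^+(f)\,d\mu$, which is $\BigO(\eps)$ by the scaling of the operator, and $\tfrac{1}{2}\int_{-1}^1\mathcal{D}^-(f-f_0)\,d\mu$, which is $\BigO(\eps)$ by the expansion of $f$. Finally, direct comparison of \eqref{FreeBol2} with the streaming particle equation \eqref{eq:streaming} forces $\Sigma=j$, yielding the announced $\BigO(\eps)$-approximation $\Sigma_{\rm{free}}:=j$. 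The main point is interpretative rather than technical: it lies in the identification $f_0=f^s$. Under this time-only scaling the leading order of the Hilbert expansion naturally plays the role of the free streaming component, mirroring Theorem~\ref{weak_theorem_H_time}, where with the additional reaction/collision scaling $f_0$ was instead identified with the trapped component $f^t$.
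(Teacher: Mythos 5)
Your proposal is correct and follows essentially the same route as the paper: take the angular mean of the $\eps^0$-equation \eqref{H3Hie1}, use \eqref{collision_vanishes} on the right hand side, integrate the $\frac{1}{r}(1-\mu^2)\partial_\mu f_0$ term by parts (boundary terms vanishing since $1-\mu^2=0$ at $\mu=\pm1$), recombine via the product rule into the radial divergence, and identify $f_0=f^s$ before comparing with \eqref{eq:streaming}. Your explicit accounting of the neglected $\BigO(\eps)$ terms is a slight elaboration the paper leaves implicit, but the argument is the same.
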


\begin{proof} 
By \eqref{D-} and \eqref{collision_vanishes}, the angular mean of the first equation \eqref{H3Hie1} in the hierarchy \eqref{H3Hie1} and~\eqref{H3Hie2} gives
\begin{equation*}
\frac{1}{2}\int_{-1}^{1} \left(\mu \frac{\partial f_0}{\partial r}+ \frac{1}{r}(1-\mu^2)\frac{\partial f_0}{\partial \mu} \right) d\mu =j-\frac{\tilde\chi}{2}\int_{-1}^{1} f_0 d\mu\,.
\end{equation*}
Integrating by parts we get
$$
\frac{1}{2}\int_{-1}^{1}\frac{1}{r}(1-\mu^2)\frac{\partial f_0}{\partial \mu}d\mu=\left[\frac{1}{r}(1-\mu^2)f_{0}\right]_{\mu=-1}^{\mu=1}+\frac{1}{2}\int_{-1}^{1}\frac{1}{r}2\mu f_0 d\mu=\frac{2}{r}\frac{1}{2}\int_{-1}^{1} f_0 \mu d\mu\,,
$$
so that, after interchanging integration and differentiation, \eqref{FreeBol2} can be written as
$$
\frac{\partial}{\partial r}\frac{1}{2}\int_{-1}^{1} f_0 \mu d\mu+\frac{2}{r}\frac{1}{2}\int_{-1}^{1} f_0 \mu d\mu=j-\tilde\chi\beta_0\,.
$$
By the product rule, this equation can be more compactly formulated as
\begin{equation*}
\frac{1}{r^2}\frac{\partial}{\partial r}\left( r^2\frac{1}{2}\int_{-1}^1f_0\mu d\mu \right) = j-\tilde\chi \beta_0\,.
\end{equation*}
Now, setting $f^s=f_0$ and thus $\beta^s=\beta_0$ provides equation \eqref{FreeBol2} and the first statement of Theorem~\ref{Free_thm}. The second statement on the source term $\Sigma$ follows from the comparison of~\eqref{FreeBol2} with~\eqref{eq:streaming}. 
\end{proof}

\begin{remark}
The scaling in \eqref{Boltzmann_teps} contains the time scaling \eqref{time_scaling}, i.e., one considers long times, and no reaction and collision scaling. On a physical level, one can argue that one should in fact introduce the scaling 
$$
j=\eps \bar j\,,\quad\tilde\chi=\eps\bar{\tilde{\chi}}\,,\quad R=\eps\bar R\,,\quad \mathcal{J}=\eps\bar{\mathcal{J}}
$$
of reactions and collisions because in the free streaming limit, e.g., far away from the core of a collapsed star, one can assume vanishing reaction and collision rates. Since in the free streaming limit we also assume the distribution function to be in a stationary state, we can then consider an even larger time scale, e.g., the time scaling
$$
t=\frac{\bar t}{\eps^2}\,,
$$
{compare \cite[p.\;131]{CercignaniKremer02}.} The resulting scaled Boltzmann equation then reads
\begin{equation}
\label{Boltzmann_tteps_eps}
\eps^2\mathcal{D}^+(f)+\mathcal{D}^-(f)=\eps j+\eps\mathcal{J}(f)
\end{equation}
and exhibits the hierarchy
\begin{eqnarray*}
\mathcal{D}^{-}(f_{0}) &=& 0\,,\\[1mm]
\mathcal{D}^{-}(f_{1}) &=& \bar j+\bar{\mathcal{J}}(f_0)\,,
\\[1mm]
\mathcal{D}^+(f_{i-2})+\mathcal{D}^{-}(f_{i}) &=& \bar{\mathcal{J}}(f_{i-1})\,,\quad\quad i = 2,3, \dots\,,
\end{eqnarray*}
corresponding to $\eps$-powers $\eps^i$, $i=0,1,\dots$. Considering the sum of the angular means of the first two equations of this hierarchy and the same arguments as in the proof of Theorem~\ref{Free_thm}, one can then derive equation \eqref{FreeBol2} with $f^s=f_0+\eps f_1$ as an order $\BigO(\eps^2)$ approximation of the scaled Boltzmann equation \eqref{Boltzmann_tteps_eps}. Analogously, the source term $j$ is then also determined up to the order $\BigO(\eps^2)$. In fact, one obtains $\BigO(\eps^n)$ approximations for any $n\in\N$ if one considers the scaled Boltzmann equation~\eqref{Boltzmann_tteps_eps} with $\eps^2$ in the time scaling replaced by $\eps^n$ and $\eps$ in the reaction and collision scaling replaced by $\eps^{n-1}$.\\
\indent
Note that we do not require the collision term $\mathcal C(f)$ to be of any specific form here. Since we only take angular means of the equations in the hierarchy, by \eqref{collision_vanishes} it is even irrelevant how we scale the collisions.\\
\indent
Finally, we remark that one can also consider these scalings for the free streaming limit in Chapman--Enskog expansions with Enskog's postulate~\eqref{f_dep_beta}. In fact, since in contrast to the reaction and the diffusion limit, the time-dependency is always of higher order in the free streaming limit, the Chapman--Enskog expansion is identical to the Hilbert expansion up to the order that needs to be considered here. 
\end{remark}

The following table summarizes the scaling results in Theorems \ref{weak_theorem_react_H}, \ref{weak_theorem_H_time} and~\ref{Free_thm}.
\vspace*{2mm}
\begin{center}
\begin{tabular}{|c||c|c|}\hline 
\backslashbox{{reac./coll.~scaled
}\mbox{\hspace*{1cm}}
\vspace*{.5mm}
}
{\vspace*{-2mm}
{time scaled
}}
&yes &no \\[1mm]\hline\hline & & \\[-2mm]
yes &\;diffusion limit\; &\;reaction limit\; \\[1mm]\hline & & \\[-2mm]
no &\;free streaming limit\; &\; full Boltzmann \;\\[1mm]\hline
\end{tabular}
\end{center}
\vspace*{4mm}

\if 0
The following table summarizes the scaling results in Theorems \ref{weak_theorem_react_H}, \ref{weak_theorem_H_time} and~\ref{Free_thm}.
\vspace*{2mm}
\begin{center}
\begin{tabular}{|c||c|c|}\hline 
{{reac./coll.~scaled
}\mbox{\hspace*{1cm}}\vspace*{.5mm}}{\vspace*{-2mm}{time scaled
}}
&yes &no \\[1mm]\hline\hline & & \\[-2mm]
yes &\;diffusion limit\; &\;reaction limit\; \\[1mm]\hline & & \\[-2mm]
no &\;free streaming limit\; &\; full Boltzmann \;\\[1mm]\hline
\end{tabular}
\end{center}
\vspace*{4mm}
\fi

\if 0

After the scaling, we name the scaled distribution function $f^{\varepsilon}$ and expand it in powers of $\varepsilon$
\begin{equation}
f^{\varepsilon}= f_{0} + \varepsilon f_{1} + \varepsilon^2 f_{2} + \mathcal O(\varepsilon^3).\label{HilbExp}
\end{equation}

As in Degond and Jin (SINUM 2005), we introduce a scaling  
$$
j' = \varepsilon^{\alpha} j,\quad \tilde\chi' = \varepsilon^{\alpha} \tilde\chi , \quad \phi_i' = \varepsilon^{\alpha} \phi_i,\quad t' =\varepsilon^\gamma t,\quad \alpha,\gamma\in\{0,1\}
$$

We can summarize the scaling in the following table
\begin{center}
\begin{tabular}{|c||c|c|}\hline
\backslashbox{Reaction scaled}{Time scaled}
&yes ($\gamma = 1$)&no ($\gamma = 0$)\\[2cm]\hline\hline\\[2mm]
yes ($\alpha = 1$) &Diffusion limit &Reaction limit \\[2cm]\hline
no ($\alpha = 0$)&Free streaming limit & Full Boltzmann \\[2mm]\hline
\end{tabular}
\end{center}
\vspace{1cm}

For simplicity, we drop the prime and write $f^{\varepsilon}$ for the distribution function in the scaled equation which reads
\begin{equation}
\varepsilon^\gamma\frac{d f^{\varepsilon}}{cd t} + \mu \frac{\partial}{\partial r} f^{\varepsilon} + (F_{\mu}^0+\varepsilon^\gamma F_\mu^1(\vec u))\frac{\partial}{\partial \mu} f^{\varepsilon} + \varepsilon^\gamma F_{\omega}(\vec u)\frac{\partial}{\partial \omega} f^{\varepsilon} =\frac{1}{\varepsilon^{\alpha}}(j+ \mathcal J(f^{\varepsilon}))\label{ScaleBol}
\end{equation}

Introducing the expansion of $f^{\varepsilon}$ into \eqref{ScaleBol} and collecting terms of same power in $\varepsilon$ gives a hierarchy of equations that depend on the value of $\alpha$ and $\gamma$.

\fi

\if 0
\begin{theorem}[$\Sigma$ in reaction limit] 
\label{Reac_thm}
We consider the Hilbert expansion \eqref{f_expanded} and \eqref{D_Hilbert_1} for the reaction scaled Boltzmann equation~\eqref{Boltzmann_eps}. Therein, let $\mathcal C(f)$ be given by \eqref{coll} with $\tilde\chi\neq 0$, $\tilde\chi+\phi_0\neq 0$ and $\tilde\chi+\phi_0-\phi_1\neq 0$. Then the reaction-type equation
\begin{equation}
 \frac{d}{cd t}\beta^t+ \frac{1}{3}\frac{d \ln\rho}{cd t}\omega\frac{\partial}{\partial \omega}\beta^t =j-\tilde\chi \beta^t -\tilde\chi \beta^s
\label{ReacLimit}
\end{equation}
is an approximation of the angular mean of Botzmann's equation \eqref{Boltzmann_eps} of order $\BigO(\varepsilon)$ with $\beta^t = \beta_0$ and $\beta^s = \beta-\beta^t = \varepsilon \beta_1 +\BigO(\varepsilon^2)$.
We can therefore define the source term $\Sigma$ in \eqref{eq:trapped} as
\begin{equation}
\label{SigmaReac}
\Sigma=\Sigma_{reac} := \tilde\chi \beta^s\,.
\end{equation}
If we additionally suppose that $\beta_1 = \BigO(\varepsilon)$, then we have $\beta^s = \beta-\beta^t =\BigO(\varepsilon^2)$ and we obtain $\Sigma=\Sigma'_{reac} := 0$ as for the reaction limit proposed in Subsection~\ref{Sigma_reaction}.
\end{theorem}

\begin{proof} 
With this scaling, we get the following hierarchy equations
\begin{eqnarray}
\label{HHie1}
0&=&j+\mathcal J(f_0)\\[1mm]
\label{HHie2}
\mathcal D(f_{i-1}) &=& {\mathcal J}(f_i)\,,\quad\quad i = 1,2, \dots\,.
\end{eqnarray}
Solving equation \eqref{HHie1} give $f_0 = j/\tilde\chi$ which is isotopic (doesn't depend on $\mu$). Therefore, the hierarchy of compatibility equations writes
\begin{eqnarray}
\label{HComHie1}
0&=&j-\tilde\chi \beta_0\\[1mm]
\label{HComHie2}
\frac{1}{2}\int_{-1}^1\mathcal D(f_{i-1}) d\mu&=& -\tilde\chi \beta_i\,,\quad\quad i = 1,2, \dots\,.
\end{eqnarray}
It's not difficult to show that $\frac{1}{2}\int_{-1}^1\mathcal D(f_0) d\mu = \frac{d}{cd t}\beta_0+ \frac{1}{3}\frac{d \ln\rho}{cd t}\omega\frac{\partial}{\partial \omega}\beta_0$ using the isotropy of $f_0$ and an integration by part of $F_{\omega}\frac{\partial}{\partial \omega} f_0$. Collecting the two first compatibility equations give
\begin{equation}
 \frac{d}{cd t}\beta_0+ \frac{1}{3}\frac{d \ln\rho}{cd t}\omega\frac{\partial}{\partial \omega}\beta_0 =j-\tilde\chi \beta_0 -\tilde\chi \varepsilon\beta_1.\label{ReacLimit2}
\end{equation}
Rewriting $\beta_0$ and $\beta_1$ in term of $\beta^t$ and $\beta^s$ and taking the limit $\varepsilon \rightarrow 0$ give the result. The second statement of Theorem~\ref{Reac_thm} follows from the comparison of \eqref{ReacLimit} with \eqref{eq:trapped} taking into account \eqref{f_t_is_beta_t}.
\end{proof}

\begin{theorem}[$\Sigma$ in diffusion limit]
\label{Diff_thm}
Let $\mathcal C(f)$ be given by \eqref{coll}. Consider equation \eqref{ScaleBol} with $\alpha = 1$ and $\gamma =1$. Then, as $\varepsilon \rightarrow 0$, the diffusion-reaction-type equation
\begin{equation}
\frac{d}{cd t}\beta^t+\frac{1}{3}\frac{d \ln\rho}{cd t}\omega\frac{\partial}{\partial \omega}\beta^t =  j-\tilde\chi (\beta^t +  \beta^s) + \frac{1}{r^2}\frac{\partial}{\partial r}\frac{\lambda r^2}{3}\frac{\partial}{\partial r}\beta^t.\label{HilDiff}
\end{equation}
is an approximation of the angular mean of Boltzmann's equation \eqref{ScaleBol} of order $\BigO(\varepsilon^2)$ with $\beta^t = \beta_0$ and $\beta^s = \beta-\beta^t = \varepsilon \beta_1 + \varepsilon^2 \beta_2 +\BigO(\varepsilon^3)$.\\
We can therefore define 
\begin{equation}
\label{SigmaDiff}
\Sigma_{diff} := -\frac{1}{r^2}\frac{\partial}{\partial r}\frac{\lambda r^2}{3}\frac{\partial}{\partial r}\beta^t+\tilde\chi \beta^s.
\end{equation}
\end{theorem}

\begin{proof} This Theorem is equivalent to Theorem~\ref{weak_theorem} because of Lemma~\ref{weak_lemma} that show the equivalence of the hierarchy equations up to $\BigO(\varepsilon^2)$ terms and Remark~\ref{D+D+} that show the equivalence between a time scaling and the approximation up to $\mathcal D^+(\mathcal D^+(\cdot))$ terms. 
\end{proof}

\begin{theorem}[$\Sigma$ in free streaming]
\label{Free_thm}
Consider equation \eqref{ScaleBol} with $\alpha = 0$ and $\gamma =1$. Then, as $\varepsilon \rightarrow 0$, the Poisson equation $\clubsuit$
\begin{equation}
\label{FreeLimit}
\left(1+\sqrt{1-\left(\frac{R_\nu(\omega)}{\max(r,R_\nu(\omega))}\right)^2}\right)\frac{1}{r^2}\frac{\partial}{\partial r}\left( r^2\beta^s\right) = j-\tilde\chi \beta^s
\end{equation}
is an approximation of the angular mean of Botzmann's equation \eqref{ScaleBol} of order $\BigO(1)$ with $f^s = f_0 $ and thus $\beta^s = \beta_0$ and $\beta^t = \beta-\beta^s = \BigO(\varepsilon)$.\\
We can therefore define 
\begin{equation}
\label{SigmaFree}
\Sigma_{free} := j.
\end{equation}
\end{theorem}

\begin{proof} Introducing the scaling in \eqref{ScaleBol} gives
\begin{equation}
\varepsilon\frac{d f^{\varepsilon}}{cd t} + \mu \frac{\partial}{\partial r} f^{\varepsilon} + (F_{\mu}^0+\varepsilon F_\mu^1)\frac{\partial}{\partial \mu} f^{\varepsilon} + \varepsilon F_{\omega}\frac{\partial}{\partial \omega} f^{\varepsilon} =j+ \mathcal J(f^{\varepsilon})\label{FreeBol}
\end{equation}
To get an equation of $\BigO(1)$, we set $\varepsilon = 0$ in \eqref{FreeBol}. This means that we are in the stationnary state limit. As $f^{\varepsilon} = f^t + f^s = f^s+\BigO(\varepsilon)$ we get
\begin{equation}
 \mu \frac{\partial}{\partial r} f^s+ \frac{1}{r}(1-\mu^2)\frac{\partial}{\partial \mu} f^{s}  =j+ \mathcal J(f^s).\label{FreeBol}
\end{equation}
Integrating this equation with respect to $\mu$ with an integration by part of $\frac{1}{r}(1-\mu^2)\frac{\partial}{\partial \mu} f^{s}$ give
\begin{equation}
\label{FreeBol2}
\frac{1}{r^2}\frac{\partial}{\partial r}\left( r^2\frac{1}{2}\int_{-1}^1f^s\mu d\mu \right) = j-\tilde\chi \beta^s
\end{equation}
The second statement of Theorem~\ref{Free_thm} follows from the comparison of \eqref{FreeBol2} with \eqref{eq:streaming}. The first statement follow from equation \eqref{scattering_sphere}.
\end{proof}

\fi

With the source terms $\Sigma_{\rm{diff}}$, $\Sigma_{\rm{reac}}$ and $\Sigma_{\rm{free}}$ in the diffusion, reaction and free streaming limit by Theorems \ref{weak_theorem_react_H}, \ref{weak_theorem_H_time} and~\ref{Free_thm}, we can now define a global source term
\begin{eqnarray}
\label{SigmaDef}
\Sigma_{\rm glob} :&=& \min\left\{ \max\Big[\Sigma_{\rm{diff}} \,,\, \Sigma_{\rm{reac}}\Big]\,,\, \Sigma_{\rm{free}}\right\}\nonumber\\[2mm] 
&=& \min\left\{ \max\left[- \frac{1}{r^2}\frac{\partial}{\partial r}\left(r^2\frac{\lambda}{3}\frac{\partial\beta^t}{\partial r}\right) +\tilde\chi \beta^s , \;\tilde\chi \beta^s\right],\, j\,\right\},
\end{eqnarray}
or, with $\tilde{\Sigma}_{\rm{reac}}$ instead of $\Sigma_{\rm{reac}}$, the IDSA source term
\begin{eqnarray}
\label{SigmaDef2}
\tilde\Sigma_{\rm glob} :&=& \min\left\{ \max\Big[\Sigma_{\rm{diff}}\, ,\, \tilde\Sigma_{\rm{reac}}\Big]\,,\, {\Sigma}_{\rm{free}}\right\}\nonumber\\[2mm] 
&=& \min\left\{ \max\left[- \frac{1}{r^2}\frac{\partial}{\partial r}\left(r^2\frac{\lambda}{3}\frac{\partial\beta^t}{\partial r}\right) +\tilde\chi \beta^s ,\; 0\right],\, j\,\right\}\,.
\end{eqnarray}
as in \eqref{Sigma}. However, in contrast to the system of coupled equations \eqref{eq:trapped}, \eqref{eq:streaming} and \eqref{Sigma} of the IDSA, we do not need to assume isotropy $f^t=\beta^t$, i.e.,~\eqref{f_t_is_beta_t}, so that we consider the equations
\begin{equation}
\label{diff_react_weak_Sigma}
\frac{d\beta^t}{cd t}+\frac{1}{3}\frac{d \ln\rho}{cd t}\omega\frac{\partial\beta^t}{\partial \omega} 
=  j-\tilde\chi \beta^t-\tilde\Sigma
\end{equation}
\begin{equation*}
\frac{1}{r^2}\frac{\partial}{\partial r}\left( r^2\frac{1}{2}\int_{-1}^1f^s\mu d\mu \right) = j-\tilde\chi \beta^s
\end{equation*}
given by Theorems \ref{weak_theorem_react_H}, \ref{weak_theorem_H_time} and~\ref{Free_thm}, and coupled by either $\tilde\Sigma=\Sigma_{\rm glob}$ or $\tilde\Sigma=\tilde\Sigma_{\rm glob}$.

We close this investigation by a short motivation why the choice of limiters from above and below for the source term $\Sigma_{\rm{diff}}$ as in the min-max-expressions \eqref{SigmaDef} and \eqref{SigmaDef2} is indeed mathematically reasonable. We base our considerations on the approximation \eqref{diff_react_weak_Sigma} of the angularly integrated Boltzmann equation with as yet undefined but, clearly, isotropic~$\tilde\Sigma$. Recall that we have $\tilde\chi=j+\chi$ by Subsection~\ref{radiative}.

\begin{lemma}
\label{limiter_lemma}
Let the background matter, i.e., $\rho$, the reaction rates $j$ and $\tilde\chi$ and the source term $\tilde\Sigma$ in \eqref{diff_react_weak_Sigma} be in a stationary state limit, i.e., independent of time. Then $\tilde\Sigma$ is limited from above and below by
\begin{equation}
-\chi\leq\tilde\Sigma\leq j\,.
\label{limiter_ab}
\end{equation}
\end{lemma}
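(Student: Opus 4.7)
The plan is to exploit the stationary state assumption to reduce \eqref{diff_react_weak_Sigma} to an algebraic relation between $\beta^t$, $j$, $\tilde\chi$ and $\tilde\Sigma$, and then to use the fermionic bound $\beta^t\in[0,1]$ inherited from $f\in[0,1]$. In the first step, I would regard \eqref{diff_react_weak_Sigma} as a pointwise-in-$(r,\omega)$ evolution equation for $\beta^t$. Since $\rho$ is time-independent, the compressional term $\frac{d\ln\rho}{cdt}\omega\,\partial_\omega\beta^t$ contributes only through the comoving advection $v\,\partial_r\ln\rho$, and a stationary-state limit in the same sense as already used for \eqref{eq:streaming} in Subsection~\ref{Sigma_free_streaming} collapses the Lagrangian derivative $d/dt$ to zero. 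The resulting algebraic balance
\[
0 = j - \tilde\chi\beta^t - \tilde\Sigma,
\]
together with $\tilde\chi = j+\chi \neq 0$, gives the unique stationary value
\[
\beta^t_\infty \;=\; \frac{j-\tilde\Sigma}{\tilde\chi} \;=\; \frac{j-\tilde\Sigma}{j+\chi}.
\]

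Second, I would invoke the Pauli exclusion principle already encoded in the range $[0,1]$ of the neutrino distribution function $f$, see~\eqref{fDef}. The non-negative decomposition $f=f^t+f^s$ assumed in \eqref{decomposition} on the whole domain forces $f^t\in[0,1]$ pointwise, and hence $\beta^t=\tfrac12\int_{-1}^{1}f^t\,d\mu\in[0,1]$ as well. Imposing $0\leq\beta^t_\infty\leq 1$ on the stationary value above then yields $\beta^t_\infty\geq 0 \Leftrightarrow \tilde\Sigma\leq j$ on the one hand, and $\beta^t_\infty\leq 1 \Leftrightarrow j-\tilde\Sigma\leq j+\chi \Leftrightarrow \tilde\Sigma\geq -\chi$ on the other, which combine to \eqref{limiter_ab}.

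The main obstacle is really the interpretation of ``stationary state limit'' for the Lagrangian left-hand side: strict time-independence of $\rho$ and $\beta^t$ alone does not eliminate the comoving advection $v\,\partial_r$ in $d/dt$, nor the compressional term involving $\partial_\omega\beta^t$. I would handle this either by invoking the frozen-matter (laboratory-frame) reformulation already adopted in \eqref{Boltz_frozen} and reused in Subsection~\ref{Sigma_free_streaming}, in which case $d/dt$ collapses to $\partial_t$ and the reduction is unambiguous, or, if one prefers to retain the comoving frame, by reading the stationary state as the long-time asymptotic equilibrium of the pointwise ODE $c^{-1}\partial_t\beta^t = (j-\tilde\Sigma)-\tilde\chi\beta^t$, which is approached exponentially fast at rate $c\tilde\chi>0$. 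Both readings yield the same $\beta^t_\infty$ and therefore the same bounds on $\tilde\Sigma$.
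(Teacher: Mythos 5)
Your proposal is correct and follows essentially the same route as the paper: reduce \eqref{diff_react_weak_Sigma} to the reaction equation $\frac{d\beta^t}{cdt}=j-\tilde\chi\beta^t-\tilde\Sigma$, identify the stationary value $\beta^t_\infty=(j-\tilde\Sigma)/\tilde\chi$ (the paper does this by writing the explicit exponential solution and letting $t\to\infty$, which is your second reading), and then impose $0\leq\beta^t_\infty\leq 1$ together with $\tilde\chi=j+\chi$ to obtain \eqref{limiter_ab}. Your explicit attention to the Lagrangian derivative and the compressional term is a welcome clarification of a step the paper treats informally, but it does not change the argument.
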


\begin{proof}
Since $\rho$ is independent of time, the second term on the left hand side of~\eqref{diff_react_weak_Sigma} vanishes and we obtain the reaction equation
$$
\frac{d \beta^t}{cdt}=j-\tilde\chi \beta^t-\tilde\Sigma
$$
with time-independent data $j$, $\tilde\chi$ and $\tilde\Sigma$ on the right hand side. Consequently, this equation possesses the analytical solution
$$
\beta^t(t)=\beta^t(0) e^{-ct\tilde\chi}+\left(1-e^{-ct\tilde\chi}\right)\frac{j-\tilde\Sigma}{\tilde\chi}\,.
$$
For $t\to\infty$ the solution approaches the stationary state limit 
$$
\beta^t_\infty=\lim_{t\to\infty}\beta^t(t)=\frac{j-\tilde\Sigma}{\tilde\chi}\,.
$$
Since, just as $f^t$, the function $\beta^t(t)=\frac{1}{2}\int_{-1}^1 f^t(t)d\mu$ is a distribution function for every $t>0$, so~is~$\beta^t_\infty$, i.e., we have $0\leq \beta^t_\infty\leq 1$ and, therefore,
\begin{equation}
\label{prelimiters}
0\leq\frac{j-\tilde\Sigma}{\tilde\chi}\leq 1\,.
\end{equation}
With $\tilde\chi=j+\chi$ this estimation is equivalent to~\eqref{limiter_ab}.
\end{proof}

By the stationary state assumption in Lemma \ref{limiter_lemma}, it turns out that the source term $\Sigma_{\rm{free}}=j$ in the free streaming limit is indeed an upper bound for $\tilde\Sigma$ because of the non-negativity of the distribution function. The bound $0\leq\tilde\Sigma$ in the IDSA~\eqref{SigmaDef2} is more restrictive than $-\chi\leq\tilde\Sigma$ in~\eqref{limiter_ab}, and even more so $\tilde\chi\beta^s\leq\tilde\Sigma$ in~\eqref{SigmaDef}. However, $-\chi\leq\tilde\Sigma$ in \eqref{limiter_ab} arises from the upper bound $\beta^t\leq 1$ which one can regard as quite extreme. If instead, one uses the thermal equilibrium function $j/\tilde\chi$ (cf.~\cite[p.\;1177]{LiebendoerferEtAl09big}) as an upper bound for $\beta^t$, i.e., if one substitutes $1$ by $j/\tilde\chi$ in \eqref{prelimiters}, one obtains $0\leq\tilde\Sigma$. 


\section*{Acknowledgments}
The authors thank N.~Vasset for stimulating discussions.


\bibliographystyle{plain}
\bibliography{IDSA}

\end{document}